\def\grad{{\nabla}}
\newcommand{\footremember}[2]{%
    \footnote{#2}
    \newcounter{#1}
    \setcounter{#1}{\value{footnote}}%
}
\newcommand{\footrecall}[1]{%
    \footnotemark[\value{#1}]%
} 
\tikzstyle{startstop} = [rectangle, rounded corners, minimum width=1cm, minimum height=1cm,text centered, draw=black]
\tikzstyle{io} = [trapezium, trapezium left angle=70, trapezium right angle=110, minimum width=1cm, minimum height=1cm, text centered, draw=black, fill=blue!30]
\tikzstyle{method} = [rectangle, rounded corners, minimum width=1cm, minimum height =1cm, text centered, draw=black]
\tikzstyle{process} = [rectangle, minimum width=1cm, minimum height=1cm, text centered, draw=black]
\tikzstyle{decision} = [diamond, minimum width=0.5cm, minimum height=0.5cm, text centered, draw=black, fill=green!30]
\tikzstyle{arrow} = [thick,->,>=stealth]
\newcommand{\lp}{\left(}
\newcommand{\rp}{\right)}
\newcommand{\lab}{<\hspace{-1mm}}
\newcommand{\rab}{\hspace{-1mm}>}
\newtheorem{remark}{Remark}[section]
\newtheorem{assumption}{Assumption}[section]
\def\PP{{{\rm l}\kern - .15em {\rm P} }}
\def\PN2{{\PP_{N}-\PP_{N-2}}}
\newcommand{\cD}{\mathcal{D}}
\newcommand{\bfeta}{\boldsymbol{\eta}}
\newcommand{\bphi}{\boldsymbol{\varphi}}
\newcommand{\bif}{\textbf{\textit{f}}\hspace{-0.5mm}}
\newcommand{\bb}{\boldsymbol{b}}
\newcommand{\bchi}{\pmb{\chi}}
\newcommand{\be}{\boldsymbol{e}}
\newcommand{\bH}{\boldsymbol{H}}
\newcommand{\bL}{\boldsymbol{L}}
\newcommand{\hl}{{\hat{l}}}
\newcommand{\hp}{{\hat{p}}}
\newcommand{\hq}{{\hat{q}}}
\newcommand{\bR}{\boldsymbol{R}}
\newcommand{\bu}{\boldsymbol{u}}
\newcommand{\bv}{\boldsymbol{v}}
\newcommand{\bhu}{\hat{\boldsymbol{u}}}
\newcommand{\bhv}{\hat{\boldsymbol{v}}}
\newcommand{\bnh}{\hat{\textbf{\textit{n}}}}
\newcommand{\hlam}{\hat{\lambda}}
\newcommand{\bV}{\boldsymbol{V}}
\newcommand{\bw}{\boldsymbol{w}}
\newcommand{\bW}{\boldsymbol{W}}
\newcommand{\bhw}{\hat{\boldsymbol{w}}}
\newcommand{\btu}{\tilde{\boldsymbol{u}}}
\newcommand{\bx}{\boldsymbol{x}}
\newcommand{\bX}{\boldsymbol{X}}
\newcommand{\bY}{\boldsymbol{Y}}
\newcommand{\by}{\boldsymbol{y}}
\newcommand{\bGamma}{\boldsymbol{\Gamma}}
\newcommand{\deleted}[1]{{}}%\color{red}{#1}}}
\begin{document}
\title{Two Robust, Efficient, and optimally Accurate Algorithms for parameterized stochastic navier-stokes Flow Problems}

\author{
%Jahrul Alam\footremember{mun}
%{I\MakeLowercase{nterdisciplinary} S\MakeLowercase{cientific} C\MakeLowercase{omputing} P\MakeLowercase{rogram}, M\MakeLowercase{emorial} U\MakeLowercase{niversity of} N\MakeLowercase{ewfoundland}, NL, C\MakeLowercase{anada.}}
Neethu Suma Raveendran\footremember{uabm}{D\MakeLowercase{epartment of} M\MakeLowercase{athematics}, U\MakeLowercase{niversity of} A\MakeLowercase{labama at} B\MakeLowercase{irmingham}, AL 35294, USA; P\MakeLowercase{artially supported by the} N\MakeLowercase{ational} S\MakeLowercase{cience} F\MakeLowercase{oundation grant} DMS-2213274.}%
\and
Md Abdul Aziz\footrecall{uabm}
\and Muhammad Mohebujjaman\footnote{C\MakeLowercase{orrespondence: mmohebuj@uab.edu}}\hspace{1mm}\footrecall{uabm}
 }

%\date{\today}
\maketitle

\begin{abstract}
This paper presents and analyzes two robust, efficient, and optimally accurate fully discrete finite element algorithms for computing the parameterized Navier-Stokes Equations (NSEs) flow ensemble. The timestepping algorithms are linearized, use the backward-Euler method for approximating the temporal derivative, and Ensemble Eddy Viscosity (EEV) regularized. The first algorithm is a coupled ensemble scheme, and the second algorithm is decoupled using projection splitting with grad-div stabilization. We proved the stability and convergence theorems for both algorithms. We have shown that for sufficiently large grad-div stabilization parameters, the outcomes of the projection scheme converge to the outcomes of the coupled scheme. We then combine the Stochastic Collocation Methods (SCMs) with the proposed two Uncertainty Quantification (UQ) algorithms. A series of numerical experiments are given to verify the predicted convergence rates and performance of the schemes on benchmark problems, which shows the superiority of the splitting algorithm.
\end{abstract}

{\bf Key words.}  Uncertainty quantification, fast ensemble calculation, finite element method, penalty-projection method,  stochastic collocation methods

\medskip
{\bf Mathematics Subject Classifications (2000)}: 65M12, 65M22, 65M60, 76W05 

\pagestyle{myheadings}
\thispagestyle{plain}

\markboth{\MakeUppercase{A penalty-projection efficient algorithm for NSE flow ensemble}}{\MakeUppercase{ N. S. Raveendran, M. A. Aziz, and M. Mohebujjaman}}

\section{Introduction}
Let $\cD\subset \mathbb{R}^d\ (d=2,3)$ be a convex polygonal or polyhedral physical domain with boundary $\partial\mathcal{D}$. A complete probability space is denoted by $(\Omega,\mathcal{F},P)$ with $\Omega$ the set of outcomes, $\mathcal{F}\subset 2^\Omega$ the $\sigma$-algebra of events, and $P:\mathcal{F}\rightarrow [0,1]$ represents a probability measure. We consider the time-dependent, dimensionless, viscoresistive, incompressible Stochastic Navier-Stokes Equations (SNSEs) for computing the 
homogeneous Newtonian fluid flow which is governed by the following non-linear stochastic PDEs:\begin{align}
    \bu_{t}+\bu\cdot\nabla \bu-\nabla\cdot\left(\nu(\bx,\omega)\nabla \bu\right)+\nabla p &=  \bif(\bx,t,\omega), \hspace{2mm}\text{in}\hspace{2mm}\mathcal{D} \times (0,T],\label{momentum}\\
\nabla\cdot \bu & = 0, \hspace{2mm}\text{in}\hspace{2mm}\mathcal{D} \times (0,T], \\
\bu(\bx,0,\omega)& = \bu^0(\bx),\hspace{2mm}\text{in}\hspace{2mm}\mathcal{D},\label{nse-initial}
\end{align}
together with appropriate boundary conditions. The simulation end time is represented by $T>0$, $\bx$ the spatial variable, and $t$ the time variable. The external force is represented by $\bif$ in the momentum equation \eqref{momentum}.
The viscosity $\nu(\bx,\omega)$ is modeled as a random field with $\omega\in\Omega$. Here, the unknown quantities are the velocity field $\bu:\Lambda\rightarrow\mathbb{R}^d$, and the modified pressure $p:\Lambda\rightarrow\mathbb{R}$, where $\Lambda:=\mathcal{D}\times (0,T]\times \Omega$.

The $L^2(\cD)$ inner product is denoted by $(\cdot,\cdot)$. For Hilbert spaces of velocity $\bX=\bH_0^1(\cD)$, pressure $Q=L^2_0(\cD)$, and stochastic space $\bW=\bL_P^2(\Omega)$, the weak form of \eqref{momentum}-\eqref{nse-initial} can be represented as: Find $\bu \in \bX \otimes \bW$ and $p \in Q \otimes \bW$ which, for almost all $t\in(0,T]$, satisfy
	\begin{align}
	\mathbb{E}[(\bu_{t},\bv)]+ \mathbb{E}[(\bu\cdot\nabla \bu,\bv)]+ \mathbb{E}[(\nu\nabla \bu,\nabla \bv)]- \mathbb{E}[(p
	,\nabla\cdot \bv)]  &  = \mathbb{E}[(\bif,\bv)],&\quad\forall \bv\in \bX \otimes \bW,\\
	\mathbb{E}[(\nabla\cdot \bu,q)]  &  =0,&\quad\forall q\in Q \otimes \bW.
	\end{align}
For the input functions, we make the following assumption $\nu(\bx,\omega)=\nu(\bx,\by(\omega))$, $\bif(\bx,t,\omega)=\bif(
\bx,t,\by(\omega))$, and $\by(\omega)=(y_1(\omega),y_2(\omega),\cdots,y_N(\omega))$ with $\mathbb{E}[\by]=\textbf{0}$, and $\mathbb{V}ar[\by]=\textbf{I}_N$.
Find $\bu\in \bX \otimes \bY$ and $p \in Q \otimes \bY$ which, for almost all $t\in(0,T]$, satisfy
\begin{align}\label{weak_formulation_final}
	\int_{\Gamma}  (\bu_{t},\bv)\rho(\by)d\by &+ \int_{\Gamma} (\bu\cdot\nabla \bu,\bv)\rho(\by)d\by + \int_{\Gamma}  (\nu(\bx,\by)\nabla \bu,\nabla \bv)\rho(\by)d\by \nonumber\\&  -\int_{\Gamma}  (p ,\nabla\cdot \bv)\rho(\by)d\by 
	 = \int_{\Gamma}  (\bif,\bv)\rho(\by)d\by, \qquad\qquad\qquad\qquad\qquad\forall \bv\in \bX \otimes \bY,\\
	&~~~\int_{\Gamma}  (\nabla\cdot \bu,q) \rho(\by)d\by    =0, \qquad \qquad \qquad \qquad ~~~~~~~~~~~~~~~~~~~~~~~\;\;\forall q\in Q \otimes \bY.
	\end{align}
  We assume affine dependence of the random variables for the viscosity as below:\vspace{-4mm}
 \begin{align}
\nu(\bx,\by)&=\nu_0(\bx)+\sum_{l=1}^N\nu_l(\bx)y_l. \end{align}

In this work, we consider the following set of $J$ (number of realizations, or the number of stochastic collocation points) time-dependent, viscoresistive and incompressible dimensionless NSEs for computing the flow ensemble simulation of homogeneous Newtonian fluids:
\begin{eqnarray}
\bu_{j,t}+\bu_j\cdot\nabla \bu_j-\nabla\cdot\left(\nu_j(\bx) \nabla \bu_j\right)+\nabla p_j &= & \bif_j(\bx,t), \hspace{2mm}\text{in}\hspace{2mm}\cD \times (0,T], \label{gov1}\\
\nabla\cdot \bu_j & =& 0, \hspace{11mm}\text{in}\hspace{2mm}\cD \times (0,T],\label{gov2}
\end{eqnarray}
where $\bu_j$, and $p_j$, denote the velocity, and pressure solutions, respectively, for each $j=1,2,\cdots,J$, corresponding to distinct kinematic viscosity $\nu_j$, and/or body forces $\bif_j$,  and/or the initial conditions $\bu_j^0$, and/or the $J$ different boundary conditions. For simplicity of our analysis, we prescribed Dirichlet boundary conditions $\bu_j(\bx,t)=\textbf{0}$  for the $j^{th}$ realization. It is assumed that $\nu_j(\bx)\in L^\infty(\cD)$, and $\nu_j(\bx)\ge\nu_{j,\min}>0$, where $\nu_{j,\min}=\min\limits_{\bx\in\cD}\nu_j(\bx)$.

Input data, e.g. initial and boundary conditions, viscosities, body forces, etc. have a significant effect on simulations of complex dynamical systems. The involvement of uncertainty in their measurements reduces the fidelity of the final solutions. For a robust and high fidelity solution, computation of ensemble average solution is popular in many applications such as surface data assimilation \cite{fujita2007surface}, magnetohydrodynamics \cite{jiang2018efficient}, porous media flow \cite{jiang2021artificial}, weather forecasting \cite{L05,LP08}, spectral methods \cite{LK10}, sensitivity analyses \cite{MX06}, and hydrology \cite{GG11}.

To reduce the immense computational cost for the above ensemble system, we propose a decoupled (penalty-projection based splitting \cite{AKMR15,erkmen2020second, linke2017connection, mohebujjaman2024efficient}) scheme together with the breakthrough idea presented in \cite{JL14}. Thus, we consider a uniform time-step size $\Delta t$ and let $t_n=n\Delta t$ for $n=0, 1, \cdots$., (suppress the spatial discretization momentarily), then computing the $J$ solutions independently, takes the following form: For $j$=1,...,$J$,\\
Step 1: Find $\bhu_j^{n+1}$:
\begin{align}
\frac{\bhu_j^{n+1}}{\Delta t}+<\bhu>^n\cdot\nabla \bhu_j^{n+1}-\nabla\cdot\left(\Bar{\nu}\nabla \bhu_j^{n+1}\right)&-\gamma\nabla(\nabla\cdot\bhu_j^{n+1})-\nabla\cdot\left( 2\nu_T(\bhu^{'}_{h},t^n)\nabla \bhu_{j,h}^{n+1}\right)\nonumber\\&= \bif_{j}(t^{n+1})+\frac{\tilde{\bu}_j^n}{\Delta t}-\bhu_j^{'n}\cdot\nabla \bhu_j^n+\nabla\cdot\left(\nu_j^{'}\nabla \bhu_j^{n}\right),\label{scheme1}\\\bhu_j^{n+1}\big|_{\partial\cD}&=0.\label{incom1}
\end{align}
Step 2: Find $\tilde{\bu}_j^{n+1}$, and $\hp_j^{n+1}$:
\begin{align}
    \frac{\tilde{\bu}_j^{n+1}}{\Delta t}+\nabla \hp_j^{n+1}&=\frac{\bhu^{n+1}_j}{\Delta t},\\
    \nabla\cdot\tilde{u}_j^{n+1}&=0,\\
\tilde{u}_j^{n+1}\cdot\bnh\big|_{\partial\cD}&=0.\label{normal-eqn}
\end{align}
Where $\bhu_j^n$, and $\hp_j^{n}$ denote approximations of $\bu_j(\cdot,t^n)$, and $p_j(\cdot,t^n)$, respectively, and the grad-div stabilization parameter $\gamma>0$. The ensemble mean and fluctuation about the mean are defined as follows:
\begin{align*}
<\bhu>^n:&=\frac{1}{J}\sum\limits_{j=1}^{J}\bhu_j^n, \hspace{2mm} \bhu_j^{'n}:=\bhu_j^n-<\bhu>^n,\\ \Bar{\nu}:&=\frac{1}{J}\sum\limits_{j=1}^{J}\nu_j,\hspace{4mm}\nu_j^{'}:=\nu_j-\Bar{\nu}.   
\end{align*}
The EEV term, which is $O(\Delta t)$, is defined using mixing length phenomenology, following \cite{jiang2015numerical},  and is given by \begin{align} \nu_T(\bhu^{'}\hspace{-1mm},t^n):=\mu\Delta t(\hl^{n})^2,\hspace{0.5mm}\text{ where}\hspace{1mm}(\hl^{n})^2=\sum_{j=1}^J|\bhu_j^{'n}|^2 \label{eddy-viscosity}
\end{align} is a scalar quantity, and $|\cdot|$ denotes length of a vector. The EEV term helps to reduce the numerical instability of the model for convection-dominated flows that are not resolved for some particular meshes. The idea of EEV is taken from turbulence modeling and is widely used \cite{jiang2015higher,jiang2015analysis,jiang2015numerical,Mohebujjaman2022High, mohebujjaman2024efficient, MR17,mohebujjaman2022efficient}. However, the analysis of the algorithms equipped with EEV is scarce.  
At each time-step, the above identical sub-problems can be solved simultaneously and  each of them shares a system matrix (which is independent of $j$) with all the $J$ realizations and solves the following system of equations $A[\bx_1|\bx_2|\cdots|\bx_J]=[\bb_1|\bb_2|\cdots|\bb_J]$.

Therefore, a massive computer memory is saved and the global system matrix assembly, factorization (if a direct solver is used), and preconditioner building (in particular for the iterative solver) are needed only once per time step. Moreover, the algorithm can take advantage of the block linear solvers. This idea in \cite{JL14} has been implemented for the solution of the heat equation with uncertain temperature-dependent conductivity \cite{Unconditionally2021Fiordilino}, NSEs simulations \cite{jiang2015higher,jiang2017second,jiang2015numerical,neda2016ensemble}, magnetohydrodynamics (MHD) \cite{jiang2018efficient,MR17,Mohebujjaman2022High,mohebujjaman2022efficient,mohebujjaman2024efficient}, parameterized flow problems \cite{GJW18, GJW17}, and turbulence modeling \cite{JKL15}.  An equivalent efficient and accurate penalty-projection EEV algorithm for the stochastic MHD flow is analyzed and test in \cite{mohebujjaman2024efficient}. Since the NSEs are the basis for simulating flows of computational modeling, and other multi-physics flow problems, it is important propose and analyze a robust, efficient and accurate penalty-projection algorithm for the SNSEs. 

Thus, using a finite element spatial discretization, we investigate the new decoupled ensemble scheme in a fully discrete setting. The efficient ensemble scheme is stable and convergent.  The rest of the report is organized as follows: To follow a smooth analysis, we provide necessary notations and mathematical preliminaries in Section \ref{notation-prelims}. In Section \ref{fully-discrete-scheme}, we present and analyze a fully discrete, linearized, efficient, and EEV based coupled algorithm corresponding to \eqref{momentum}-\eqref{nse-initial}, and provide it's stability and convergent theorems rigorously. We then propose a more efficient EEV and penalty-projection based ensemble algorithm in Section \ref{penalty-projection}, where we prove the stability and convergence theorems of the proposed penalty-projection algorithm. We show that for a large penalty parameter, the penalty-projection scheme converges to the coupled scheme proposed in Section \ref{fully-discrete-scheme}. In Section \ref{scm}, we provide a brief introduction of the SCMs. A series of numerical experiments are given in Section \ref{numerical-experiment}, which support the theory, combine SCMs with the proposed schemes and implement them on several benchmark problems.

\section{Notation and preliminaries}\label{notation-prelims}

Let $\cD\subset \mathbb{R}^d\ (d=2,3)$ be a convex polygonal or polyhedral domain in $\mathbb{R}^d(d=2,3)$ with boundary $\partial\cD$. The usual $L^2(\cD)$ norm and inner product are denoted by $\|.\|$ and $(.,.)$, respectively. Similarly, the $L^p(\cD)$ norms and the Sobolev $W_p^k(\cD)$ norms are $\|.\|_{L^p}$ and $\|.\|_{W_p^k}$, respectively for $k\in\mathbb{N},\hspace{1mm}1\le p\le \infty$. Sobolev space $W_2^k(\cD)$ is represented by $H^k(\cD)$ with norm $\|.\|_k$. The vector-valued spaces are $$\bL^p(\cD)=(L^p(\cD))^d, \hspace{1mm}\text{and}\hspace{1mm}\bH^k(\cD)=(H^k(\cD))^d.$$
For $\bX$ being a normed function space in $\cD$, $L^p(0,T;\bX)$ is the space of all functions defined on $(0,T]\times\cD$ for which the following norm
\begin{align*}
    \|\bu\|_{L^p(0,T;\bX)}=\lp\int_0^T\|\bu\|_{\bX}^pdt\rp^\frac{1}{p},\hspace{2mm}p\in[1,\infty)
\end{align*}
is finite. For $p=\infty$, the usual modification is used in the definition of this space. The natural function spaces for our problem are
\begin{align*}
    \bX:&=\bH_0^1(\cD)=\{\bv\in \bL^2(\cD) :\nabla \bv\in L^2(\cD)^{d\times d}, \bv=0 \hspace{2mm} \mbox{on}\hspace{2mm}   \partial \cD\},\\
    \bY:&=\{\bv\in \bL^2(\cD),\bv\cdot\hat{n}\big|_{\partial\cD}=0\},\\
    Q:&=L_0^2(\cD)=\{ q\in L^2(\cD): \int_\cD q\hspace{1mm}d\bx=0\}.
\end{align*}
Recall the Poincare inequality holds in $X$: There exists $C$ depending only on $\cD$ satisfying for all $\bphi\in X$,
\[
\| \bphi \| \le C \| \nabla \bphi \|.
\]
The divergence free velocity space is given by
$$\bV:=\{\bv\in \bX:(\nabla\cdot \bv, q)=0, \forall q\in Q\}.$$
We define the skew symmetric trilinear form $b^*:\bX\times \bX\times \bX\rightarrow \mathbb{R}$ by
	\[
	b^*(\bu,\bv,\bw):=\frac12(\bu\cdot\nabla \bv,\bw)-\frac12(\bu\cdot\nabla \bw,\bv). 
	\]
 By the divergence theorem \cite{jiang2015higher}, it can be shown \begin{align}
     b^*(\bu,\bv,\bw)=(\bu\cdot\nabla \bv,\bw)+\frac12(\nabla\cdot\bu,\bv\cdot\bw).\label{trilinear-identitiy}
 \end{align}Recall from \cite{L08, lee2011error, linke2017connection} that for any $\bu,\bv,\bw\in 
		\bX$
	\begin{align}
		b^*(\bu,\bv,\bw)&\leq C(\cD)\|\nabla \bu\|\|\nabla \bv\|\|\nabla \bw\|,\label{nonlinearbound}
	\end{align}	
 and additionally, if $\bv\in \bL^\infty(\Omega)$, and $\nabla\bv\in\bL^3(\Omega)$, then 
 \begin{align}
    b^*(\bu,\bv,\bw)\leq C(\cD)\|\bu\|\left(\|\nabla\bv\|_{L^3}+\|\bv\|_{L^\infty}\right)\|\nabla\bw\|. \label{nonlinearbound3}
 \end{align}The following basic inequalities will be used:
\begin{align}
\|\bu\cdot\nabla\bv\|&\le\||\bu|\nabla\bv\|,\label{basic-ineq}\\
\|\nabla\cdot\bu\|_{L^\infty}&\le C\|\nabla\bu\|_{L^\infty}.\label{basic-ineq-infinity}
\end{align}
The conforming finite element spaces are denoted by $\bX_h\subset \bX$ and  $Q_h\subset Q$, and we assume a regular triangulation $\tau_h(\cD)$, where $h$ is the maximum triangle diameter.   We assume that $(\bX_h,Q_h)$ satisfies the usual discrete inf-sup condition
\begin{eqnarray}
\inf_{q_h\in Q_h}\sup_{\bv_h\in \bX_h}\frac{(q_h,\grad\cdot \bv_h)}{\|q_h\|\|\grad \bv_h\|}\geq\beta>0,\label{infsup}
\end{eqnarray}
where $\beta$ is independent of $h$. We assume that there exists a finite element space  $\bY_h\subset\bY$. The space of discretely divergence free functions is defined as 
\begin{align*}
    \bV_h:=\{\bv_h\in \bX_h:(\nabla\cdot \bv_h,q_h)=0,\hspace{2mm}\forall q_h\in Q_h\}.
\end{align*}
For simplicity of our analysis, we will use Scott-Vogelius (SV) finite element pair $(\bX_h, Q_h)=((P_k)^d, P_{k-1}^{disc})$,  which satisfies the \textit{inf-sup} condition when the mesh is created as a barycenter refinement of a regular mesh, and the polynomial degree $k\ge d$  \cite{arnold1992quadratic,Z05}. Our analysis can be extended without difficulty to any inf-sup stable element choice, {\color{black} however, there will be additional terms that appear in the convergence analysis if non-divergence-free elements are chosen.}

We have the following approximation properties in $(\bX_h,Q_h)$: \cite{BS08}
\begin{align}
\inf_{\bv_h\in \bX_h}\|\bu-\bv_h\|&\leq Ch^{k+1}|\bu|_{k+1},\hspace{2mm}\bu\in \bH^{k+1}(\cD),\label{AppPro1}\\
 \inf_{\bv_h\in \bX_h}\|\grad (\bu-\bv_h)\|&\leq Ch^{k}|\bu|_{k+1},\hspace{5mm}\bu\in \bH^{k+1}(\cD),\label{AppPro2}\\
\inf_{q_h\in Q_h}\|p-q_h\|&\leq Ch^k|p|_k,\hspace{10mm}p\in H^k(\cD),
\end{align}
where $|\cdot|_r$ denotes the $H^r$ or $\bH^r$ seminorm.

We will assume the mesh is sufficiently regular for the inverse inequality to hold. The following lemma for the discrete Gr\"onwall inequality was given in \cite{HR90}.
\begin{lemma}\label{dgl}
		Let $\Delta t$, $\mathcal{E}$, $a_n$, $b_n$, $c_n$, $d_n$ be non-negative numbers for $n=1,\cdots, M$ such that
		$$a_M+\Delta t \sum_{n=1}^Mb_n\leq \Delta t\sum_{n=1}^{M-1}{d_na_n}+\Delta 
		t\sum_{n=1}^Mc_n+\mathcal{E}\hspace{3mm}\mbox{for}\hspace{2mm}M\in\mathbb{N},$$
		then for all $\Delta t> 0,$
		$$a_M+\Delta t\sum_{n=1}^Mb_n\leq \mbox{exp}\left(\Delta t\sum_{n=1}^{M-1}d_n\right)\lp\Delta 
		t\sum_{n=1}^Mc_n+\mathcal{E}\rp\hspace{2mm}\mbox{for}\hspace{2mm}M\in\mathbb{N}.$$
	\end{lemma}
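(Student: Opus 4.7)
The plan is to reduce the statement to a standard inductive discrete Grönwall argument on a single, monotone auxiliary sequence, and then recover the $b_n$-term bound by plugging the pointwise bound on $a_n$ back into the hypothesis. First I would set $G_M := \Delta t \sum_{n=1}^M c_n + \mathcal{E}$ and $s_M := \Delta t \sum_{n=1}^{M} d_n$, and observe that since all quantities are non-negative, $G_M$ and $s_M$ are non-decreasing in $M$. Dropping the $\Delta t \sum b_n$ term in the hypothesis (it is non-negative) yields the simpler inequality
\begin{equation*}
a_M \le \Delta t \sum_{n=1}^{M-1} d_n a_n + G_M.
\end{equation*}

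Next I would prove the pointwise bound $a_M \le G_M \exp(s_{M-1})$ by induction on $M$. The base case $M=1$ is immediate since the empty sum vanishes and $a_1 \le G_1 = G_1 e^{0}$. For the inductive step, I would insert the inductive hypothesis $a_n \le G_n \exp(s_{n-1}) \le G_{M} \exp(s_{n-1})$ (using monotonicity of $G_n$) into the simplified inequality, giving
\begin{equation*}
a_M \le G_M \Bigl(1 + \Delta t \sum_{n=1}^{M-1} d_n \exp(s_{n-1})\Bigr).
\end{equation*}
The analytic heart of the argument is the telescoping comparison with the exponential integral: since $\exp$ is increasing and $s_n - s_{n-1} = d_n \Delta t$,
\begin{equation*}
d_n \Delta t \cdot \exp(s_{n-1}) \le \int_{s_{n-1}}^{s_n} e^t\,dt = \exp(s_n) - \exp(s_{n-1}),
\end{equation*}
and summing telescopes to $\exp(s_{M-1}) - 1$, which closes the induction.

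Finally, I would return to the original hypothesis (keeping the $\Delta t \sum b_n$ term this time) and substitute the just-proven pointwise bound for each $a_n$ on the right-hand side:
\begin{equation*}
a_M + \Delta t \sum_{n=1}^M b_n \le \Delta t \sum_{n=1}^{M-1} d_n \cdot G_M \exp(s_{n-1}) + G_M \le G_M \bigl(\exp(s_{M-1}) - 1\bigr) + G_M = G_M \exp(s_{M-1}),
\end{equation*}
which is precisely the claimed conclusion. The only delicate point is ensuring monotonicity of $G_n$ is used correctly so that a single uniform factor $G_M$ can be pulled outside the sum; this is where non-negativity of $c_n$ and $\mathcal{E}$ is essential. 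There is no genuine obstacle beyond this bookkeeping — the telescoping exponential comparison is the standard trick and the rest is routine induction.
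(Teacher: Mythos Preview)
Your argument is correct and is the standard inductive proof of the discrete Gr\"onwall inequality. Note, however, that the paper itself does not prove this lemma at all: it is stated without proof and attributed to the reference \cite{HR90}, so there is no ``paper's own proof'' to compare against---your self-contained derivation simply supplies what the paper takes for granted.
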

 \section{Efficient Coupled EEV (Coupled-EEV) algorithm for SNSEs}\label{fully-discrete-scheme}
In this section, we propose a velocity and pressure coupled, fully discrete, efficient, linear extrapolated, EEV stabilized, backward-Euler finite element timestepping algorithm for the parameterized SNSEs. The algorithm is efficient because it is presented in a way so that at each time-step, for all the realizations, the system matrix remains the same but with different right-hand-side vectors. Therefore, it allows to save a huge computational time and computer memory. The Coupled-EEV scheme is presented in Algorithm \ref{coupled-alg}.  We provide the stability and convergence theorems of the Coupled-EEV scheme and the proofs of these theorems are given in Appendix \ref{appendix-stability}-\ref{appendix-convergence}.\\
\begin{algorithm}[H]\label{coupled-alg}
  \caption{Coupled-EEV scheme} Given time-step $\Delta t>0$, end time $T>0$, initial conditions $\bu_j^0\in  \bH^1(\cD)$ and $\bif_{j}\in$ $ L^2\left( 0,T;\bH^{-1}(\cD)\right)$ for $j=1,2,\cdots\hspace{-0.35mm},J$. Set $M=T/\Delta t$ and for $n=1,\cdots\hspace{-0.35mm},M-1$, compute: Find $(\bu_{j,h}^{n+1}, p_{j,h}^{n+1})\in \bX_h\times Q_h$ satisfying, for all $(\bchi_{h},q_{h})\in \bX_h\times Q_h$:
 \begin{align}
&\Big(\frac{\bu_{j,h}^{n+1}-\bu_{j,h}^n}{\Delta t},\bchi_{h}\Big)+b^*\big(\hspace{-1mm}<\bu_h>^n, \bu_{j,h}^{n+1},\bchi_{h}\big)+\big(\Bar{\nu}\nabla \bu_{j,h}^{n+1},\nabla\bchi_{h}\big)-(p_{j,h}^{n+1},\nabla\cdot\bchi_{h})\nonumber\\&+\left( 2\nu_T(\bu^{'}_{h},t^n)\nabla \bu_{j,h}^{n+1},\nabla\bchi_h\right)= \big(\bif_{j}(t^{n+1}),\bchi_{h}\big)-b^*(\bu_{j,h}^{'n}, \bu_{j,h}^n,\bchi_{h})-\big(\nu_j^{'}\nabla \bu_{j,h}^{n},\nabla\bchi_{h}\big),
\label{couple-eqn-1}\\\nonumber\\&\big(\nabla\cdot\bu_{j,h}^{n+1},q_{h}\big)=0,\label{couple-incompressibility}
\end{align}
\end{algorithm}
\noindent where $\bu_j^n$, and $p_j^{n}$ denote approximations of $\bu_j(\cdot,t^n)$, and $p_j(\cdot,t^n)$, respectively, and EEV is defined as \begin{align} \nu_T(\bu_h^{'},t^n):=\mu\Delta t(l^{n})^2,\hspace{0.5mm}\text{ where}\hspace{1mm}(l^{n})^2=\sum_{j=1}^J|\bu_j^{'n}|^2. \label{eddy-viscosity2}
\end{align} To simplify the notation, denote $\alpha_j:=\Bar{\nu}_{\min}-\|\nu_j^{'}\|_\infty$, for $j=1,2,\cdots\hspace{-0.35mm}, J$, where $\Bar{\nu}_{\min}:=\min\limits_{\bx\in\cD}\Bar{\nu}(\bx)$. We assume that the data does not have outlier and observations are close enough to the mean so that $\alpha_j>0$ holds. 
\begin{theorem}[Stability]
Suppose $\bif_{\hspace{0.5mm}j}\in L^2(0,T;\bH^{-1}(\cD))$, and $\bu_{j,h}^0\in\bH^1(\cD)$ for all $j=1,2,
\cdots, J$, then the solutions of Algorithm \ref{coupled-alg} are stable: Given $\alpha_j>0$ and $\mu>\frac12$, if $\Delta t<\frac{C\alpha_j}{\|\nabla\cdot\bu_{j,h}^{'n}\|^2_{L^\infty}}$ then 
\begin{align}
\|\bu_{j,h}^{M}\|^2+\alpha_j\Delta t\sum_{n=0}^{M}\|\nabla \bu_{j,h}^{n}\|^2\le \|\bu_{j,h}^{0}\|^2+\Bar{\nu}_{\min}\Delta t\|\nabla \bu_{j,h}^{0}\|^2+\frac{2\Delta t}{\alpha_j}\sum_{n=1}^{M}\|\bif_{\hspace{0.5mm}j}(t^{n})\|_{-1}^2.\label{stability-couple-alg}
\end{align}
\end{theorem}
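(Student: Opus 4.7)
The plan is to test the momentum equation \eqref{couple-eqn-1} with $\bchi_h=2\Delta t\,\bu_{j,h}^{n+1}$, obtain a discrete energy identity via the polarization identity for the temporal derivative, use skew-symmetry and \eqref{couple-incompressibility} to eliminate the advective term with $\langle\bu_h\rangle^n$ and the pressure, and keep the mean-viscous and EEV dissipations on the left. After bounding the remaining right-hand side contributions and absorbing them, summation over $n=0,\dots,M-1$ should yield \eqref{stability-couple-alg}; no genuine Gr\"onwall iteration is required since the stated bound carries no exponential factor.

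The routine pieces are as follows: $b^*(\langle\bu_h\rangle^n,\bu_{j,h}^{n+1},\bu_{j,h}^{n+1})=0$ by skew-symmetry, the pressure term drops on choosing $q_h=p_{j,h}^{n+1}$ in \eqref{couple-incompressibility}, the polarization identity produces $\|\bu_{j,h}^{n+1}\|^2-\|\bu_{j,h}^n\|^2+\|\bu_{j,h}^{n+1}-\bu_{j,h}^n\|^2$, the mean-viscous term yields $2\Bar{\nu}_{\min}\Delta t\|\nabla\bu_{j,h}^{n+1}\|^2$, and the EEV term contributes $4\mu\Delta t^2\int_\cD(l^n)^2|\nabla\bu_{j,h}^{n+1}|^2\,d\bx$ on the LHS. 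On the RHS, I would bound $2\Delta t(\bif_j,\bu_{j,h}^{n+1})$ by duality and Young's with weight $\alpha_j/2$ to generate the $\tfrac{2\Delta t}{\alpha_j}\|\bif_j\|_{-1}^2$ contribution, and bound $-2\Delta t(\nu_j'\nabla\bu_{j,h}^n,\nabla\bu_{j,h}^{n+1})$ via $\|\nu_j'\|_{L^\infty}$ and Young's so that combining with the mean-viscous term on the left yields a net coefficient proportional to $\alpha_j=\Bar{\nu}_{\min}-\|\nu_j'\|_{L^\infty}$.

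The crux is the nonlinear forcing $-2\Delta t\,b^*(\bu_{j,h}^{'n},\bu_{j,h}^n,\bu_{j,h}^{n+1})$. Using $b^*(\bu,\bv,\bv)=0$, I would rewrite it as $2\Delta t\,b^*(\bu_{j,h}^{'n},\bu_{j,h}^{n+1},\bu_{j,h}^n-\bu_{j,h}^{n+1})$ and expand by \eqref{trilinear-identitiy} into a convective piece and a divergence piece. For the convective piece $2\Delta t(\bu_{j,h}^{'n}\cdot\nabla\bu_{j,h}^{n+1},\bu_{j,h}^n-\bu_{j,h}^{n+1})$, \eqref{basic-ineq} together with the pointwise bound $|\bu_{j,h}^{'n}|^2\le(l^n)^2$ produces $2\Delta t\,\|l^n|\nabla\bu_{j,h}^{n+1}|\|\,\|\bu_{j,h}^{n+1}-\bu_{j,h}^n\|$; Young's inequality then splits this into $2\mu\Delta t^2\|l^n|\nabla\bu_{j,h}^{n+1}|\|^2$ (half of the EEV budget) plus $\tfrac{1}{2\mu}\|\bu_{j,h}^{n+1}-\bu_{j,h}^n\|^2$, and the hypothesis $\mu>\tfrac12$ is exactly what renders the latter absorbable into the polarization remainder. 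For the divergence piece $\Delta t(\nabla\cdot\bu_{j,h}^{'n},\bu_{j,h}^{n+1}\cdot(\bu_{j,h}^n-\bu_{j,h}^{n+1}))$, H\"older, Poincar\'e, and Young's give a term of the form $C\Delta t^2\|\nabla\cdot\bu_{j,h}^{'n}\|_{L^\infty}^2\|\nabla\bu_{j,h}^{n+1}\|^2$ plus a further time-difference contribution; the timestep restriction $\Delta t<C\alpha_j/\|\nabla\cdot\bu_{j,h}^{'n}\|_{L^\infty}^2$ is precisely what allows the first to be absorbed by the leftover $\alpha_j\Delta t\|\nabla\bu_{j,h}^{n+1}\|^2$, while the second slot falls into the polarization budget.

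Summing $n=0,\dots,M-1$ then telescopes $\|\bu_{j,h}^{n+1}\|^2-\|\bu_{j,h}^n\|^2$ into $\|\bu_{j,h}^M\|^2-\|\bu_{j,h}^0\|^2$, and the residual viscous sums reorganize (after shifting indices) into $\alpha_j\Delta t\sum_{n=1}^{M}\|\nabla\bu_{j,h}^n\|^2$ minus a single boundary piece $\|\nu_j'\|_{L^\infty}\Delta t\|\nabla\bu_{j,h}^0\|^2$; adding $\alpha_j\Delta t\|\nabla\bu_{j,h}^0\|^2$ to both sides closes the left-hand sum down to $n=0$ and produces the $\Bar{\nu}_{\min}\Delta t\|\nabla\bu_{j,h}^0\|^2$ that appears in \eqref{stability-couple-alg}. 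I expect the main obstacle to be the delicate bookkeeping of three dissipative budgets, the residual viscous $\alpha_j\Delta t\|\nabla\bu_{j,h}^{n+1}\|^2$, the EEV integral $4\mu\Delta t^2\|l^n|\nabla\bu_{j,h}^{n+1}|\|^2$, and the polarization remainder $\|\bu_{j,h}^{n+1}-\bu_{j,h}^n\|^2$, in the sense that the Young constants chosen in the body-force, perturbed-viscosity, and two nonlinear estimates must each leave a strictly positive remainder in the appropriate slot; the conditions $\mu>\tfrac12$ and the stated timestep restriction surface as the exact thresholds that make this three-way balance close.
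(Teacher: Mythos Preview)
Your proposal is correct and follows essentially the same route as the paper's proof: test with $\bu_{j,h}^{n+1}$, eliminate the mean-advection and pressure terms, apply the polarization identity, rewrite the fluctuation nonlinearity via skew-symmetry as $b^*(\bu_{j,h}^{'n},\bu_{j,h}^{n+1},\bu_{j,h}^{n+1}-\bu_{j,h}^n)$ and split it through \eqref{trilinear-identitiy} into a convective piece controlled by the EEV dissipation and a divergence piece controlled by the polarization remainder under the stated timestep restriction, then telescope. The only cosmetic differences are your choice of Young weights---the paper places the $\|\nabla\cdot\bu_{j,h}^{'n}\|_{L^\infty}^2$ factor on the time-difference term rather than on $\|\nabla\bu_{j,h}^{n+1}\|^2$, and uses the splitting $\Delta t\|l^n\nabla\bu_{j,h}^{n+1}\|^2+\tfrac{1}{4\Delta t}\|\bu_{j,h}^{n+1}-\bu_{j,h}^n\|^2$ for the convective piece---but both allocations close under the same hypotheses $\mu>\tfrac12$ and $\Delta t<C\alpha_j/\|\nabla\cdot\bu_{j,h}^{'n}\|_{L^\infty}^2$.
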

\begin{proof}
	See the Apendix \ref{appendix-stability}.
\end{proof}
\begin{theorem}[Convergence] Suppose $(\bu_j,p_j)$ satisfying \eqref{momentum}-\eqref{nse-initial} and the following regularity assumptions for $m=\max\{3,k+1\}$
\begin{align*}
   \bu_j&\in L^2(0,T;\bH^{m}(\cD)^d)\cap L^\infty(0,T; \bH^{m}(\cD)^d)\\
   \bu_{j,t}&\in L^\infty(0,T;\bH^2(\cD)^d)\cap L^2(0,T;\bH^1(\cD)^d)\\
   \bu_{j,tt}&\in L^2(0,T;\bL^2(\cD)^d)
\end{align*}
with $k\ge 2$, then the ensemble solution of the Algorithm \ref{coupled-alg} converges to the true ensemble solution: For $\alpha_j>0$ and $\mu>\frac12$, if $\Delta t<\frac{C\alpha_j}{\|\nabla\cdot\bu_{j,h}^{'n}\|^2_{L^\infty}}$ then, the following holds
\begin{align}
    \|<\bu>(T)-<\bu_h>^M\|^2+\alpha_{\min}\Delta t\sum_{n=1}^M\Big\|\nabla\Big(\hspace{-1.1mm}<\bu>(t^n)-<\bu_h>^n\hspace{-1.1mm}\Big)\Big\|^2\nonumber\\\le C\Big(h^{2k}+\Delta t^2+h^{2-d}\Delta t^2+h^{2k-1}\Delta t\Big).\label{convergence-error}
\end{align}
\end{theorem}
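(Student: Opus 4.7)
The plan is the standard finite element error-analysis framework: derive an error equation by subtracting the discrete scheme \eqref{couple-eqn-1} from the continuous weak formulation evaluated at $t^{n+1}$, split the error into an interpolation part and a discretely divergence-free remainder, bound each residual, apply the discrete Gr\"onwall inequality (Lemma \ref{dgl}), and finally average over realizations.

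First I would test \eqref{momentum} at $t^{n+1}$ against $\bchi_h\in\bV_h$ (so the pressure-gradient term vanishes by the pointwise divergence-freeness of the SV pair), decompose $\bu_j(t^{n+1})-\bu_{j,h}^{n+1}=\bfeta_j^{n+1}+\bphi_{j,h}^{n+1}$ with $\bfeta_j^{n+1}:=\bu_j(t^{n+1})-I_h\bu_j(t^{n+1})$ for a suitable $I_h\bu_j(t^{n+1})\in\bV_h$ and $\bphi_{j,h}^{n+1}:=I_h\bu_j(t^{n+1})-\bu_{j,h}^{n+1}\in\bV_h$, and take $\bchi_h=\bphi_{j,h}^{n+1}$. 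The LHS then reproduces the telescoping and viscous-dissipation terms familiar from the stability proof, while the RHS collects consistency residuals: a time-truncation piece bounded by $O(\Delta t^2)$ via Taylor expansion using $\bu_{j,tt}\in L^2(0,T;\bL^2)$; interpolation residuals on $\bfeta_j^{n+1}$ and its discrete time-derivative bounded by $O(h^{2k}+\Delta t^2)$ via \eqref{AppPro1}--\eqref{AppPro2}; and the viscosity-fluctuation residual $(\nu_j'\nabla\bu_{j,h}^n,\nabla\bphi_{j,h}^{n+1})$, rewritten as $(\nu_j'\nabla(\bu_{j,h}^n-\bu_j(t^{n+1})),\nabla\bphi_{j,h}^{n+1})+(\nu_j'\nabla\bu_j(t^{n+1}),\nabla\bphi_{j,h}^{n+1})$ to separate a propagation term from a bounded data term.

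The main obstacle is the nonlinear mismatch
\begin{align*}
b^*(\bu_j(t^{n+1}),\bu_j(t^{n+1}),\bphi_{j,h}^{n+1})-b^*(<\bu_h>^n,\bu_{j,h}^{n+1},\bphi_{j,h}^{n+1})-b^*(\bu_{j,h}^{'n},\bu_{j,h}^n,\bphi_{j,h}^{n+1}),
\end{align*}
together with the EEV term. Writing $\bu_j(t^{n+1})=<\bu>(t^{n+1})+\bu_j'(t^{n+1})$ at the continuous level and inserting $\pm b^*(<\bu_h>^n,\bu_j(t^{n+1}),\cdot)$, $\pm b^*(\bu_{j,h}^{'n},\bu_j(t^{n+1}),\cdot)$ and their $I_h$-counterparts, the nonlinear residual decomposes into time-lag differences $\bu_j(t^{n+1})-\bu_j(t^n)$ (bounded by $\Delta t\,\|\bu_{j,t}\|$), interpolation pieces involving $\bfeta_j$, and error-propagation pieces in $\bphi_{j,h}$. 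These are controlled with \eqref{nonlinearbound} and the sharper \eqref{nonlinearbound3} on slots where $\bu_j$ appears (exploiting $\bu_j\in L^\infty(0,T;\bH^m)$ with $m\ge 3$), while skew-symmetry annihilates the diagonal $b^*(\cdot,\bphi_{j,h}^{n+1},\bphi_{j,h}^{n+1})=0$. For the EEV contribution $(2\mu\Delta t(l^n)^2\nabla\bu_{j,h}^{n+1},\nabla\bphi_{j,h}^{n+1})$, I would expand $\nabla\bu_{j,h}^{n+1}=\nabla\bu_j(t^{n+1})-\nabla\bfeta_j^{n+1}-\nabla\bphi_{j,h}^{n+1}$: the last piece yields a nonnegative quadratic term that moves to the LHS, while the first two consistency pieces are bounded by applying an inverse inequality to the fluctuations entering $(l^n)^2$ combined with the a priori stability bound \eqref{stability-couple-alg}; this is precisely where the factor $h^{2-d}\Delta t^2$ in \eqref{convergence-error} originates.

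Choosing Young constants so that every $\|\nabla\bphi_{j,h}^{n+1}\|^2$ contribution on the RHS absorbs into $\alpha_j\|\nabla\bphi_{j,h}^{n+1}\|^2$ on the LHS (requiring the same timestep restriction $\Delta t<C\alpha_j/\|\nabla\cdot\bu_{j,h}^{'n}\|_{L^\infty}^2$ as in the stability argument), summing $n=0,\dots,M-1$ and invoking Lemma \ref{dgl} delivers the target bound on $\|\bphi_{j,h}^M\|^2+\alpha_j\Delta t\sum_{n=1}^M\|\nabla\bphi_{j,h}^n\|^2$ per realization. A triangle inequality against $\bfeta_j$ and \eqref{AppPro1}--\eqref{AppPro2} promote this to the same rate for $\bu_j(t^n)-\bu_{j,h}^n$, after which averaging over $j=1,\dots,J$ together with $\alpha_{\min}\le\alpha_j$ yields \eqref{convergence-error}. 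The trickiest step will be verifying that the EEV/inverse-inequality estimate closes uniformly in $n$ using only the a priori stability bound, and that the strategic insertions in the nonlinear decomposition keep all error terms at or below the advertised rate $O(h^{2k}+\Delta t^2+h^{2-d}\Delta t^2+h^{2k-1}\Delta t)$.
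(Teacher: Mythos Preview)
Your overall strategy matches the paper's proof in Appendix~B: form the error equation, split $\be_j^n=\bfeta_j^n-\bphi_{j,h}^n$ (the paper uses the $L^2$ projection for $\tilde\bu_j^n$, which has the convenient side-effect that $(\bfeta_j^{n+1}-\bfeta_j^n,\bphi_{j,h}^{n+1})=0$), test with $\bphi_{j,h}^{n+1}$, estimate each residual, and finish with Lemma~\ref{dgl}. Your identification of the EEV consistency piece as the source of $h^{2-d}\Delta t^2$ is right; the paper does exactly your expansion $\nabla\bu_{j,h}^{n+1}=\nabla\bu_j(t^{n+1})-\nabla\bfeta_j^{n+1}-\nabla\bphi_{j,h}^{n+1}$, bounding the $\bu_j$ piece via $\|l^n\|_{L^4}^4$ and Ladyzhenskaya ($\to h^{2-d}\Delta t^2$) and the $\bfeta$ piece via $\|(l^n)^2\|_{L^\infty}$ and an inverse inequality ($\to h^{2k-1}\Delta t$).

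There is one genuine ordering gap. You propose to apply Gr\"onwall \emph{per realization} and then average over $j$. This will not close: the nonlinear decomposition necessarily produces the coupled term $b^*(\langle\be\rangle^n,\bu_j(t^{n+1})-\bu_j(t^n),\bphi_{j,h}^{n+1})$, which after estimation leaves $\frac{C}{\alpha_j}\|\langle\bphi_h\rangle^n\|^2$ on the right-hand side. Since $\|\langle\bphi_h\rangle^n\|^2$ involves \emph{all} realizations, the inequality for a single $j$ is not self-contained and Lemma~\ref{dgl} cannot be applied to it. The paper handles this by summing over $j=1,\dots,J$ \emph{before} invoking Gr\"onwall, bounding $\sum_j\|\langle\bphi_h\rangle^n\|^2\le\frac{1}{J}\sum_j\|\bphi_{j,h}^n\|^2$, and then applying Lemma~\ref{dgl} to the summed system. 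A smaller wording issue: your ``bounded data term'' $(\nu_j'\nabla\bu_j(t^{n+1}),\nabla\bphi_{j,h}^{n+1})$ is $O(1)$, not small---it must cancel against the continuous viscous term, leaving the actual residual $(\nu_j'\nabla\be_j^n,\nabla\bphi_{j,h}^{n+1})$ plus a time-lag piece, as the paper writes in its $G(t,\bu_j,\bchi_h)$.
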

\begin{proof}
	See the proof in the Apendix \ref{appendix-convergence}.
\end{proof}

\begin{lemma}\label{lemma1}
Assume the true solution $\bu_j\in L^\infty(0,T;\bH^2(\cD))$. We also assume there exists a constant $C_*$ which is independent of $h$, $\Delta t$, and $\gamma$ such that for sufficiently small $h$ and $\Delta t$, the solution of the Algorithm \ref{coupled-alg} satisfies
\begin{align*}
    \max_{1\le n\le M}\Big(\|\nabla \bu_{j,h}^n\|_{L^3}+\|\bu_{j,h}^n\|_{L^\infty}\Big)&\le C_*,\hspace{2mm}\text{for all}\hspace{2mm}j=1,2,\cdots,J.
\end{align*}
\end{lemma}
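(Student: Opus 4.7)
The plan is to justify this bound by combining the convergence theorem already established for Algorithm \ref{coupled-alg} with standard inverse inequalities and Sobolev embeddings. First, I would insert a triangle inequality against the nodal/Scott--Zhang interpolant $I_h \bu_j(t^n)$ of the true solution:
\begin{align*}
\|\bu_{j,h}^n\|_{L^\infty} &\le \|\bu_{j,h}^n - I_h\bu_j(t^n)\|_{L^\infty} + \|I_h\bu_j(t^n)\|_{L^\infty},\\
\|\nabla \bu_{j,h}^n\|_{L^3} &\le \|\nabla(\bu_{j,h}^n - I_h\bu_j(t^n))\|_{L^3} + \|\nabla I_h\bu_j(t^n)\|_{L^3}.
\end{align*}
The two interpolant terms are controlled by the stability of $I_h$ and the Sobolev embedding $\bH^2(\cD) \hookrightarrow \bL^\infty(\cD) \cap \bW^{1,3}(\cD)$ (valid for $d \le 3$), which under the regularity assumption $\bu_j \in L^\infty(0,T;\bH^2(\cD))$ yields a bound independent of $h$ and $\Delta t$.

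Next, for the discrete remainders $\bu_{j,h}^n - I_h\bu_j(t^n) \in \bX_h$, I would apply the inverse inequalities
\[
\|\bv_h\|_{L^\infty} \le C h^{-d/2}\|\bv_h\|, \qquad \|\nabla \bv_h\|_{L^3} \le C h^{-d/6}\|\nabla \bv_h\|,
\]
which hold on quasi-uniform meshes. Another triangle inequality gives
\[
\|\bu_{j,h}^n - I_h\bu_j(t^n)\| \le \|\bu_{j,h}^n - \bu_j(t^n)\| + \|\bu_j(t^n) - I_h\bu_j(t^n)\|,
\]
and analogously for $\|\nabla(\bu_{j,h}^n - I_h\bu_j(t^n))\|$. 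The interpolation terms are $O(h^{k+1})$ and $O(h^k)$ respectively by \eqref{AppPro1}--\eqref{AppPro2}, while the finite element error $\|\bu_{j,h}^n - \bu_j(t^n)\|$ (and its $H^1$ counterpart) obeys the realization-wise version of the convergence bound \eqref{convergence-error}, namely $O(h^k + \Delta t + h^{(2-d)/2}\Delta t + h^{(2k-1)/2}\Delta t^{1/2})$ in $L^2$, with a comparable $H^1$ estimate recoverable from the same energy argument in Appendix~\ref{appendix-convergence}.

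Combining these estimates, the $L^\infty$ bound looks like $C h^{-d/2}\bigl(h^k + \Delta t + h^{(2-d)/2}\Delta t + \cdots\bigr) + C$, and the $\nabla\cdot\|_{L^3}$ bound like $C h^{-d/6}\bigl(h^{k-1} + \cdots\bigr) + C$. For these to stay uniformly bounded as $h, \Delta t \to 0$, I need $k \ge d/2$ (satisfied since $k \ge d$ for Scott--Vogelius) and a mild mesh--time compatibility condition of CFL-type, roughly $\Delta t \le C h^{d/2}$, which is exactly the meaning of ``sufficiently small $h$ and $\Delta t$'' in the statement. Independence from $\gamma$ is automatic because the Coupled-EEV scheme does not involve the grad-div parameter.

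The main obstacle I anticipate is handling the EEV term $\nu_T(\bu_h',t^n)$ carefully while peeling off a realization-wise $L^2$ and $H^1$ error estimate from the ensemble-averaged bound \eqref{convergence-error}: since \eqref{convergence-error} is stated for $<\bu> - <\bu_h>$, one must re-examine the Appendix~\ref{appendix-convergence} argument to confirm that each individual realization error is controlled by the same right-hand side (which is standard, because the ensemble average appears only in the drift coefficient). Once that is in place, the inverse-inequality bookkeeping is routine, and the uniformity in $(h,\Delta t,\gamma)$ follows.
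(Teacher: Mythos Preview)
Your proposal is correct and follows essentially the same route as the paper: triangle inequality against the continuous solution, Sobolev/interpolation inequalities to pass to the $H^1$ error, inverse inequality to absorb the higher norm, then invoke the realization-wise convergence estimate (the paper uses the intermediate bound \eqref{error-bounds-j-level} from Appendix~\ref{appendix-convergence}, confirming your expectation that the per-realization error is controlled before averaging) and close with a CFL-type restriction on $\Delta t$ versus $h$.

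The one technical difference is the splitting point. The paper inserts $\bu_j(t^n)$ directly, applies Agmon's inequality $\|v\|_{L^\infty}\le C\|v\|_{H^1}^{1/2}\|v\|_{H^2}^{1/2}$ and the analogous Sobolev interpolation for $\|\nabla v\|_{L^3}$ to the full error $\bu_{j,h}^n-\bu_j(t^n)$, and then somewhat informally applies a discrete inverse inequality to trade $\|\nabla^2(\cdot)\|$ for $h^{-1}\|\nabla(\cdot)\|$, arriving at $Ch^{-1/2}\|\nabla(\bu_{j,h}^n-\bu_j(t^n))\|$. Your choice to split against the interpolant $I_h\bu_j(t^n)$ is cleaner, since the inverse inequalities are then applied to a genuinely discrete function; the price is slightly different exponents ($h^{-d/2}$ on the $L^2$ error for the $L^\infty$ piece), but the resulting CFL window is of the same character as the paper's explicit requirement $O(h^{2k-1})\le\Delta t\le O(h^{d-1})$.
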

\begin{proof}
    Using triangle inequality, we write
\begin{align}
    \|\nabla \bu_{j,h}^n\|_{L^3}+\|\bu_{j,h}^n\|_{L^\infty}&\le \|\nabla(\bu_{j,h}^n-\bu_j(t^n))\|_{L^3}\nonumber\\&+\|\nabla\bu_j(t^n)\|_{L^3}+\|\bu_{j,h}^n-\bu_j(t^n)\|_{L^\infty}+\|\bu_j(t^n)\|_{L^\infty}.\label{lemma-trianlge}
\end{align}
Apply Sobolev embedding theorem on the first, and second terms, and Agmon’s \cite{Robinson2016Three-Dimensional} inequality on the third, and fourth terms in the right-hand-side of \eqref{lemma-trianlge}, to obtain
\begin{align}
    \|\nabla \bu_{j,h}^n\|_{L^3}+\|\bu_{j,h}^n\|_{L^\infty}&\le C\|\nabla(\bu_{j,h}^n-\bu_j(t^n))\|^{\frac12}\|\nabla^2(\bu_{j,h}^n-\bu_j(t^n))\|^{\frac12}\nonumber\\&+C\|\bu_j(t^n)\|_{H^1}^\frac12\|\bu_j(t^n)\|_{H^2}^\frac12.
\end{align}
 Apply the regularity assumption of the true solution and
discrete inverse inequality, to obtain
\begin{align}
    \|\nabla \bu_{j,h}^n\|_{L^3}+\|\bu_{j,h}^n\|_{L^\infty}&\le Ch^{-\frac12}\|\nabla(\bu_{j,h}^n-\bu_j(t^n))\|+C.
\end{align}
Consider the $(P_k,P_{k-1})$ element for the pair $(\bu_{j,h},p_{j,h})$, and use the error bounds in \eqref{error-bounds-j-level}, gives 
\begin{align*}
    \|\nabla \bu_{j,h}^n\|_{L^3}+\|\bu_{j,h}^n\|_\infty&\le Ch^{-\frac12}\left(\frac{h^k}{\Delta t^{\frac12}}+\Delta t^{\frac12}+h^{1-\frac{d}{2}}\Delta t^{\frac12}+h^{k-\frac12}\right)+C.
\end{align*}
$$C\Big(h^{2k}+\Delta t^2+h^{2-d}\Delta t^2+h^{2k-1}\Delta t\Big).$$
Choose $\Delta t$ so that
\begin{align*}
    \frac{h^{k-\frac12}}{\Delta t^\frac12}\le\frac{1}{C},\;
    \frac{\Delta t^\frac12}{h^\frac12}\le \frac{1}{C},\;\text{and}\;
    h^{\frac{1-d}{2}}\Delta t^\frac12\le\frac{1}{C},
\end{align*}
which gives
\begin{align*}
    \|\nabla \bu_{j,h}^n\|_{L^3}+\|\bu_{j,h}^n\|_\infty\le 3+C,
\end{align*}
with time-step restriction $O(h^{2k-1})\le\Delta t\le O(h^{d-1})$. Therefore, for $C_*:=3+C$, completes the proof.
\end{proof}

\section{Efficient Stabilized Penalty Projection EEV (SPP-EEV) algorithm for SNSEs}
\label{penalty-projection}
Now, we present and analyze a more efficient, fully discrete, and decoupled penalty-projection time stepping scheme with grad-div and EEV stabilization for computing NSE flow ensemble. The splitting error of the algorithm diminishes for large grad-div stabilization parameter values. We then connect the SPP-EEV scheme to the Coupled-EEV scheme by showing that for the large penalty parameters, the outcomes of the SPP-EEV converge to the Coupled-EEV scheme's outcomes. In the SPP-EEV scheme, we avoid solving a difficult saddle-point problem at each time-step by using two steps, where we require two easier linear solves. In Step 1, we solve a $1\times 1$ block system for the velocity with the Dirichlet boundary condition but without satisfying the incompressibility condition. In Step 2, we solve a $2\times 2$ saddle-point system (without satisfying the original boundary condition) which requires an easier linear solve since the non-linear term is absent, and provides symmetric positive definite system matrices at each time-step. Moreover, each of the steps in SPP-EEV scheme is designed technically so that at each time-step, the system matrix remains the same for all the realizations but with different right-hand-side vectors. Thus, in both steps, the advantage of reusing the matrix factorization or the block linear solvers can be taken. Therefore, together with all these features, the SPP-EEV is supposed to be an efficient and accurate ensemble scheme for the uncertainty quantification of SNSEs flows. The SPP-EEV scheme is given in Algorithm \ref{SPP-FEM}.

\begin{algorithm}[H]\label{SPP-FEM}
  \caption{SPP-EEV scheme} Given time-step $\Delta t>0$, end time $T>0$, initial conditions $\bhu_j^0=\tilde{\bu}_j^0\in \bY_h\cap \bH^2(\cD)$ and $\bif_{j}\in$ $ L^2\left( 0,T;\bH^{-1}(\cD)\right)$ for $j=1,2,\cdots\hspace{-0.35mm},J$. Set $M=T/\Delta t$ and for $n=1,\cdots\hspace{-0.35mm},M-1$, compute:\\
 Step 1: Find $\bhu_{j,h}^{n+1}\in \bX_h$ satisfying for all $\bchi_{h}\in \bX_h$:
 \begin{align}
&\Big(\frac{\bhu_{j,h}^{n+1}-\tilde{\bu}_{j,h}^n}{\Delta t},\bchi_{h}\Big)+b^*\big(\hspace{-1mm}<\hspace{-1mm}\bhu_h\hspace{-1mm}>^n, \bhu_{j,h}^{n+1},\bchi_{h}\big)+\big(\Bar{\nu}\nabla \bhu_{j,h}^{n+1},\nabla\bchi_{h}\big)+\gamma\big(\nabla\cdot\bhu_{j,h}^{n+1},\nabla\cdot \bchi_{h}\big)\nonumber\\&+\Big(2\nu_T(\hat{\bu}^{'}_{h},t^n)\nabla \bhu_{j,h}^{n+1},\nabla \bchi_{h}\Big)= \big(\bif_{j}(t^{n+1}),\bchi_{h}\big)-b^*\big(\bhu_{j,h}^{'n}, \bhu_{j,h},\bchi_{h}\big)-\big(\nu_j^{'}\nabla \bhu_{j,h}^{n},\nabla\bchi_{h}\big).\label{spp-step-1}
\end{align}
Step 2: Find $\left(\tilde{\bu}_{j,h}^{n+1},\hp_{j,h}^{n+1}\right)\in \bY_h\times Q_h$ satisfying for all $(\bv_{h},q_{h})\in \bY_h\times Q_h$:
\begin{align}
    \Big(\frac{\tilde{\bu}_{j,h}^{n+1}-\bhu_{j,h}^{n+1}}{\Delta t},\bv_{h}\Big)-\big( \hp_{j,h}^{n+1},\nabla\cdot\bv_{h}\big)&=0,\label{spp-step-2-1}\\
    \big(\nabla\cdot\tilde{\bu}_{j,h}^{n+1},q_{h}\big)&=0.\label{spp-step-2-2}
\end{align}
\end{algorithm}
 Since $\bX_h\subset\bY_h$, we can choose $\bv_{h}=\bchi_{h}$ in \eqref{spp-step-2-1}, and combine them with equation \eqref{spp-step-1}, to get

\begin{align}
\Big(\frac{\bhu_{j,h}^{n+1}-\bhu_{j,h}^n}{\Delta t}, &\;\bchi_{h}\Big)+b^*\big(\hspace{-1mm}<\bhu_h>^n, \bhu_{j,h}^{n+1},\bchi_{h}\big)+\big(\Bar{\nu}\nabla \bhu_{j,h}^{n+1},\nabla \bchi_{h}\big)+\gamma\big(\nabla\cdot\bhu_{j,h}^{n+1},\nabla\cdot\bchi_{h}\big)\nonumber\\&+\Big(2\nu_T(\hat{\bu}^{'}_{h},t^n)\nabla \bhu_{j,h}^{n+1},\nabla \bchi_{h}\Big)-\big(\hp_{j,h}^{n},\nabla\cdot\bchi_{h}\big)= \big(\bif_{j}(t^{n+1}),\bchi_{h}\big)\nonumber\\&-b^*\big(\bhu_{j,h}^{'n}, \bhu_{j,h}^n,\bchi_{h}\big)-\big(\nu_j^{'}\nabla \bhu_{j,h}^{n},\nabla\bchi_{h}\big).\label{hatweak1}
\end{align}

\subsection{Stability Analysis}\label{stability-analysis}
We now prove stability and well-posedness for the Algorithm \ref{SPP-FEM}. 

\begin{lemma}(Unconditional Stability)
Let $\big(\bhu_{j,h}^{n+1},\hp_{j,h}^{n+1}\big)$ be the solution of Algorithm \ref{SPP-FEM} and $\textbf{f}_{j}\in L^2\left(0,T;\bH^{-1}(\cD)\right)$, and $\bhu_{j,h}^0\in \bH^1(\cD)$. Then for all $\Delta t>0$, if $\alpha_j>\frac{C}{h}>0$, and $\mu>\frac{\alpha_jh^2}{2\Delta t}$, we have the following stability bound:
\begin{align}
    \|\bhu_{j,h}^{M}\|^2+\left(\alpha_j-\frac{C}{\alpha_jh^2}\right)\Delta t\sum_{n=1}^{M}\|\nabla\bhu_{j,h}^n\|^2+2\gamma\Delta t\sum_{n=1}^{M}\|\nabla\cdot\bhu_{j,h}^{n}\|^2\nonumber\\\le\|\bhu_{j,h}^0\|^2+\Bar{\nu}_{\min}\Delta t\|\nabla \bhu_{j,h}^{0}\|^2+\frac{\Delta t}{\alpha_j}\sum_{n=1}^{M}\|\bif_{j}(t^{n})\|_{-1}^2.
\end{align}
\end{lemma}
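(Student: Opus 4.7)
The plan is to derive the energy estimate by choosing the test function $\bchi_{h}=2\Delta t\,\bhu_{j,h}^{n+1}$ in the Step~1 equation \eqref{spp-step-1} directly, rather than in the combined form \eqref{hatweak1}; working with \eqref{spp-step-1} has the advantage that the awkward pressure term $\hp_{j,h}^{n}$ never enters the estimate. The polarization identity turns the time-difference term into $\|\bhu_{j,h}^{n+1}\|^{2}-\|\tilde{\bu}_{j,h}^{n}\|^{2}+\|\bhu_{j,h}^{n+1}-\tilde{\bu}_{j,h}^{n}\|^{2}$, and a preliminary step using Step~2 at level $n$ gives $\|\tilde{\bu}_{j,h}^{n}\|\le\|\bhu_{j,h}^{n}\|$: testing \eqref{spp-step-2-1}--\eqref{spp-step-2-2} with $\bv_{h}=\tilde{\bu}_{j,h}^{n}$ and using $\hp_{j,h}^{n}\in Q_{h}$ together with the discrete divergence-freeness of $\tilde{\bu}_{j,h}^{n}$ yields $\|\tilde{\bu}_{j,h}^{n}\|^{2}=(\tilde{\bu}_{j,h}^{n},\bhu_{j,h}^{n})$, so Cauchy-Schwarz closes the bound. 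This replacement ensures the velocity norms telescope cleanly when summed. Skew-symmetry eliminates the mean-advection term, and the terms $2\Delta t(\Bar{\nu}\nabla\bhu_{j,h}^{n+1},\nabla\bhu_{j,h}^{n+1})\ge 2\Bar{\nu}_{\min}\Delta t\|\nabla\bhu_{j,h}^{n+1}\|^{2}$, $2\gamma\Delta t\|\nabla\cdot\bhu_{j,h}^{n+1}\|^{2}$, and the nonnegative EEV contribution $4\Delta t(\nu_{T}\nabla\bhu_{j,h}^{n+1},\nabla\bhu_{j,h}^{n+1})\ge 4\mu\Delta t^{2}\,\||\bhu_{j,h}^{'n}|\nabla\bhu_{j,h}^{n+1}\|^{2}$ stay on the LHS.

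For the RHS, the forcing is bounded by duality and Young's inequality calibrated to $\alpha_{j}$, producing $\frac{\Delta t}{\alpha_{j}}\|\bif_{j}(t^{n+1})\|_{-1}^{2}+\alpha_{j}\Delta t\|\nabla\bhu_{j,h}^{n+1}\|^{2}$; the viscosity-fluctuation term is bounded with $\|\nu_{j}^{'}\|_{\infty}$ and combines with the viscous dissipation via $\Bar{\nu}_{\min}-\|\nu_{j}^{'}\|_{\infty}=\alpha_{j}$. The main obstacle is the nonlinear fluctuation $b^{*}(\bhu_{j,h}^{'n},\bhu_{j,h}^{n},\bhu_{j,h}^{n+1})$. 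I will apply skew-symmetry to rewrite it as $-b^{*}(\bhu_{j,h}^{'n},\bhu_{j,h}^{n+1},\bhu_{j,h}^{n})$, then use identity \eqref{trilinear-identitiy} together with the basic inequality $\|\bu\cdot\nabla\bv\|\le\||\bu|\nabla\bv\|$ from \eqref{basic-ineq} to isolate the dominant factor $\||\bhu_{j,h}^{'n}|\nabla\bhu_{j,h}^{n+1}\|\,\|\bhu_{j,h}^{n}\|$. Young's inequality with parameter $2\mu\Delta t$ then absorbs the resulting $\||\bhu_{j,h}^{'n}|\nabla\bhu_{j,h}^{n+1}\|^{2}$ into the EEV contribution, and by the hypothesis $\mu>\alpha_{j}h^{2}/(2\Delta t)$ combined with Poincar\'e's inequality, the leftover $\tfrac{1}{4\mu\Delta t}\|\bhu_{j,h}^{n}\|^{2}$ is controlled by $\frac{C\Delta t}{\alpha_{j}h^{2}}\|\nabla\bhu_{j,h}^{n}\|^{2}$, producing exactly the $\frac{C}{\alpha_{j}h^{2}}$ deficit in the stated coefficient.

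The trickiest subterm is the divergence piece $\tfrac{1}{2}(\nabla\cdot\bhu_{j,h}^{'n},\bhu_{j,h}^{n+1}\cdot\bhu_{j,h}^{n})$ produced by \eqref{trilinear-identitiy}, since it does not share the EEV structure; I expect its control will require a discrete inverse inequality to exchange an $L^{\infty}$ norm on $\nabla\cdot\bhu_{j,h}^{'n}$ for an $L^{2}$ norm at a cost of $h^{-1/2}$, which is plausibly the origin of the $\alpha_{j}>C/h$ hypothesis. Once this is handled, summing from $n=0$ to $M-1$ telescopes $\|\bhu_{j,h}^{n+1}\|^{2}-\|\bhu_{j,h}^{n}\|^{2}$ into $\|\bhu_{j,h}^{M}\|^{2}-\|\bhu_{j,h}^{0}\|^{2}$, the gradient sums from the viscosity-fluctuation bound shift indices to combine, and the unmatched $n=0$ boundary term yields $\Bar{\nu}_{\min}\Delta t\|\nabla\bhu_{j,h}^{0}\|^{2}$ on the RHS. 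No discrete Gr\"onwall is needed; the stated stability bound follows directly by gathering terms.
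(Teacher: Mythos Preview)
Your overall plan matches the paper's proof: test \eqref{spp-step-1} with $\bhu_{j,h}^{n+1}$, use the Step-2 contraction $\|\tilde{\bu}_{j,h}^{n}\|\le\|\bhu_{j,h}^{n}\|$ (derived exactly as you describe), kill the mean-advection by skew-symmetry, split the fluctuation nonlinearity via \eqref{trilinear-identitiy} into a convective piece absorbed by the EEV and a divergence piece, and telescope. Your Young's calibration on the convective piece (parameter $2\mu\Delta t$, then $\mu>\alpha_jh^2/(2\Delta t)$, then Poincar\'e) is a harmless rearrangement of what the paper does (Poincar\'e first, then Young with parameter $\alpha_jh^2$); both land on $\frac{C}{\alpha_jh^2}\|\nabla\bhu_{j,h}^{n}\|^2$.

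The real gap is the divergence subterm $\tfrac12(\nabla\cdot\bhu_{j,h}^{'n},\bhu_{j,h}^{n+1}\cdot\bhu_{j,h}^{n})$, and your proposed mechanism does not close. There is no inverse inequality of the form ``$L^\infty\to L^2$ at cost $h^{-1/2}$'' (the cost is $h^{-d/2}$), and even with the correct power you would be left with an uncontrolled factor $\|\nabla\cdot\bhu_{j,h}^{'n}\|$ or $\|\nabla\bhu_{j,h}^{'n}\|$ on the right-hand side; that quantity is part of what you are trying to bound, so the estimate would become circular or require a Gr\"onwall step, contradicting your final claim. The paper handles this term differently: it bounds $\|\nabla\cdot\bhu_{j,h}^{'n}\|_{L^\infty}\le C\|\nabla\bhu_{j,h}^{'n}\|_{L^\infty}\le Ch^{-1}\|\bhu_{j,h}^{'n}\|_{L^\infty}$ via \eqref{basic-ineq-infinity} and the discrete inverse inequality, and then invokes Assumption~\ref{assump1} (the uniform bound $\|\bhu_{j,h}^{n}\|_{L^\infty}\le C_*$, justified a posteriori in Lemma~\ref{uniform-boundedness-lemma-proof}) to replace $\|\bhu_{j,h}^{'n}\|_{L^\infty}$ by a constant. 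After Poincar\'e and Young this produces another $\frac{C}{\alpha_jh^2}\|\nabla\bhu_{j,h}^{n}\|^2$, which merges with the convective contribution and gives the stated coefficient; the hypothesis $\alpha_j>C/h$ is then exactly the positivity condition $\alpha_j-\tfrac{C}{\alpha_jh^2}>0$. You need to add this use of Assumption~\ref{assump1}; without it the divergence piece cannot be controlled in the way the lemma requires.
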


\begin{proof}
Taking $\bchi_{h}=\bhu_{j,h}^{n+1}$ in \eqref{spp-step-1}, to obtain
\begin{align}
    \Bigg(\frac{\bhu_{j,h}^{n+1}-\btu_{j,h}^n}{\Delta t}, \bhu_{j,h}^{n+1}\Bigg)+\|\Bar{\nu}^\frac{1}{2}\nabla \bhu_{j,h}^{n+1}\|^2+\gamma\|\nabla\cdot\bhu_{j,h}^{n+1}\|^2+\left(2\nu_T(\hat{\bu}^{'}_{h},t^n)\nabla \bhu_{j,h}^{n+1},\nabla \bhu_{j,h}^{n+1}\right)\nonumber\\=\Big(\bif_{j}(t^{n+1}),\bhu_{j,h}^{n+1}\Big)-b^*\Big(\bhu_{j,h}^{'n}, \bhu_{j,h}^n,\bhu_{j,h}^{n+1}\Big)-\Big(\nu_j^{'}\nabla \bhu_{j,h}^{n},\nabla\bhu_{j,h}^{n+1}\Big).
\end{align}
Using polarization identity and
$(2\nu_T(\bhu^{'}_{h},t^n)\nabla \bhu_{j,h}^{n+1},\nabla \bhu_{j,h}^{n+1})=2\mu\Delta t\|\hl^{n}\nabla \bu_{j,h}^{n+1}\|^2$, we get 
\begin{align}
    \frac{1}{2\Delta t}\Big(\|\bhu_{j,h}^{n+1}\|^2-\|\btu_{j,h}^n\|^2+\|\bhu_{j,h}^{n+1}-\btu_{j,h}^n\|^2\Big)+\|\Bar{\nu}^\frac{1}{2}\nabla \bhu_{j,h}^{n+1}\|^2+\gamma\|\nabla\cdot\bhu_{j,h}^{n+1}\|^2\nonumber\\+2\mu\Delta t\|\hl^{n}\nabla \bhu_{j,h}^{n+1}\|^2=\Big(\bif_{j}(t^{n+1}),\bhu_{j,h}^{n+1}\Big)-b^*\Big(\bhu_{j,h}^{'n}, \bhu_{j,h}^n,\bhu_{j,h}^{n+1}\Big)-\Big(\nu_j^{'}\nabla \bhu_{j,h}^{n},\nabla\bhu_{j,h}^{n+1}\Big).\label{pol11}
\end{align}
Applying Cauchy-Schwarz and Young's inequalities on the forcing term, yields
  \begin{align*}
      (\bif_{j}(t^{n+1}),\bhu_{j,h}^{n+1})\le\|\bif_{j}(t^{n+1})\|_{-1}\|\nabla\bhu_{j,h}^{n+1}\|\le \frac{\alpha_j}{4}\|\nabla\bhu_{j,h}^{n+1}\|^2+\frac{1}{\alpha_j}\|\bif_{j}(t^{n+1})\|_{-1}^2.
  \end{align*}
We rewrite the trilinear form in \eqref{pol11}, use identity \eqref{trilinear-identitiy}, Cauchy-Schwarz, H\"older's, Poincar\'e, and \eqref{basic-ineq}-\eqref{basic-ineq-infinity} inequalities, to have
\begin{align}
    -b^*\left(\bhu_{j,h}^{'n}, \bhu_{j,h}^n,\bhu_{j,h}^{n+1}\right)&=b^*\left(\bhu_{j,h}^{'n}, \bhu_{j,h}^{n+1},\bhu_{j,h}^n\right)=\left(\bhu_{j,h}^{'n}\cdot\nabla \bhu_{j,h}^{n+1}, \bhu_{j,h}^n\right)+\frac12\left(\nabla\cdot\bhu_{j,h}^{'n},\bhu_{j,h}^{n+1}\cdot \bhu_{j,h}^n\right)\nonumber\\&\le\|\bhu_{j,h}^{'n}\cdot\nabla
\bhu_{j,h}^{n+1}\|\|\bhu_{j,h}^n\|+\frac12\|\nabla\cdot\bhu_{j,h}^{'n}\|_{L^\infty}\|\bhu_{j,h}^{n+1}\|\|\bhu_{j,h}^n\|\nonumber\\&\le C\||\bhu_{j,h}^{'n}|\nabla
\bhu_{j,h}^{n+1}\|\|\nabla\bhu_{j,h}^n\|+C\|\nabla\bhu_{j,h}^{'n}\|_{L^\infty}\|\nabla\bhu_{j,h}^{n+1}\|\|\nabla\bhu_{j,h}^n\|. \label{fluc-bound-first-hat}
\end{align}
Using \eqref{eddy-viscosity}, Young's, discrete inverse inequalities, and Assumption \ref{assump1} in \eqref{fluc-bound-first-hat}, gives
  \begin{align}
      -b^*\Big(\bhu_{j,h}^{'n}, 
			\bhu_{j,h}^n,\bhu_{j,h}^{n+1}\Big)
   &\le\frac{\alpha_j}{4}\|\nabla\bhu_{j,h}^{n+1}\|^2+C\|\hl^{n}\nabla 			\bhu_{j,h}^{n+1}\|\|\nabla\bhu_{j,h}^n\|+\frac{C}{\alpha_jh^2}\|\bhu_{j,h}^{'n}\|_{L^\infty}^2\|\nabla\bhu_{j,h}^{n}\|^2\nonumber\\&\le\frac{\alpha_j}{4}\|\nabla\bhu_{j,h}^{n+1}\|^2+C\|\hl^{n}\nabla 			\bhu_{j,h}^{n+1}\|\|\nabla\bhu_{j,h}^n\|+\frac{CC_*}{\alpha_jh^2}\|\nabla\bhu_{j,h}^{n}\|^2\nonumber\\
   &\le\frac{\alpha_j}{4}\|\nabla\bhu_{j,h}^{n+1}\|^2+\alpha_jh^2\|\hl^{n}\nabla 
\bhu_{j,h}^{n+1}\|^2+\frac{C}{\alpha_jh^2}\|\nabla\bhu_{j,h}^{n}\|^2.\label{fluc-bound-2}
  \end{align}
  Use of H\"older's and Young's inequalities, we have
\begin{align*}
    -(\nu_j^{'}\nabla \bhu_{j,h}^{n},\nabla\bhu_{j,h}^{n+1})\le\|\nu_j^{'}\|_{\infty}\|\nabla \bhu_{j,h}^{n}\|\|\nabla\bhu_{j,h}^{n+1}\|\le\frac{\|\nu_j^{'}\|_{\infty}}{2}\|\nabla \bhu_{j,h}^{n}\|^2+\frac{\|\nu_j^{'}\|_{\infty}}{2}\|\nabla \bhu_{j,h}^{n+1}\|^2.
\end{align*}
Define $\Bar{\nu}_{\min}:=\min\limits_{\bx\in\cD}\Bar{\nu}(\bx)$, using the above bounds, and reducing the equation \eqref{pol11}, becomes
\begin{align}
    \frac{1}{2\Delta t}\Big(\|\bhu_{j,h}^{n+1}\|^2-\|\btu_{j,h}^n\|^2+\|\bhu_{j,h}^{n+1}-\btu_{j,h}^n\|^2\Big)+\frac{\nu_{\min}}{2}\|\nabla \bhu_{j,h}^{n+1}\|^2+\gamma\|\nabla\cdot\bhu_{j,h}^{n+1}\|^2\nonumber\\+\left(2\mu\Delta t-\alpha_jh^2\right)\|\hl^{n}\nabla \bhu_{j,h}^{n+1}\|^2\le\left(\frac{\|\nu_j^{'}\|_{\infty}}{2}+\frac{C}{\alpha_jh^2}\right)\|\nabla\bhu_{j,h}^n\|^2+\frac{1}{\alpha_j}\|\bif_{j}(t^{n+1})\|_{-1}^2.
\end{align}
Choose $\mu>\frac{\alpha_jh^2}{2\Delta t}$, drop non-negative terms from left, and rearrange
\begin{align}
    \frac{1}{2\Delta t}\Big(\|\bhu_{j,h}^{n+1}\|^2-\|\btu_{j,h}^n\|^2\Big)+\frac{\Bar{\nu}_{\min}}{2}\left(\|\nabla \bhu_{j,h}^{n+1}\|^2-\|\nabla \bhu_{j,h}^{n}\|^2\right)\nonumber\\+\left(\frac{\alpha_j}{2}-\frac{C}{\alpha_jh^2}\right)\|\nabla\bhu_{j,h}^n\|^2+\gamma\|\nabla\cdot\bhu_{j,h}^{n+1}\|^2\le \frac{1}{2\alpha_j}\|\bif_{j}(t^{n+1})\|_{-1}^2.\label{estimate-b4-tild}
\end{align}
Now choose $\bv_{h}=\btu_{j,h}^{n+1}$ in \eqref{spp-step-2-1}, and $q_{h}=\hp_{j,h}^{n+1}$ in \eqref{spp-step-2-2}, then apply Cauchy-Schwarz and Young’s inequalities, to obtain
\begin{align*}
    \|\btu_{j,h}^{n+1}\|^2\le\|\bhu_{j,h}^{n+1}\|^2,
\end{align*}
for all $n=0,1,2,\cdots,M-1$. Plugging this estimate into \eqref{estimate-b4-tild}, results in 
\begin{align}
    \frac{1}{2\Delta t}\Big(\|\bhu_{j,h}^{n+1}\|^2-\|\bhu_{j,h}^n\|^2\Big)+\frac{\Bar{\nu}_{\min}}{2}\Big(\|\nabla \bhu_{j,h}^{n+1}\|^2-\|\nabla \bhu_{j,h}^{n}\|^2\Big)\nonumber\\+\left(\frac{\alpha_j}{2}-\frac{C}{\alpha_jh^2}\right)\|\nabla\bhu_{j,h}^n\|^2+\gamma\|\nabla\cdot\bhu_{j,h}^{n+1}\|^2\le \frac{1}{2\alpha_j}\|\bif_{j}(t^{n+1})\|_{-1}^2.
\end{align}
Multiplying both sides by $2\Delta t$, summing over the time steps $n=0,1,2,\cdots,M-1$, and assuming $\alpha_j>\frac{C}{h}>0$ for all $j=1,2,\cdots,J$, completes the proof.
\end{proof}

We now prove the penalty projection based Algorithm \ref{SPP-FEM} converges to the coupled Algorithm \ref{coupled-alg} as $\gamma\rightarrow\infty$. Thus, we need to define the space $\bR_h:=\bV_h^\perp\subset\bX_h$ to be the orthogonal complement of $\bV_h$ with respect to the $\bH^1(\cD)$ norm.

\begin{lemma}\label{CR-lemma}
Let the finite element pair $(\bX_h,Q_h)\subset(\bX,Q)$ satisfy the \textit{inf-sup condition} \eqref{infsup} and the divergence-free property, i.e., $\nabla\cdot\bX_h\subset Q_h$. Then there exists a constant $C_R$ independent of $h$ such that $$\|\nabla\bv_h\|\le C_R\|\nabla\cdot\bv_h\|,\hspace{3mm}\forall\bv_h\in \bR_h.$$
\end{lemma}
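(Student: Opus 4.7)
The plan is to exploit the inf-sup condition \eqref{infsup} together with the divergence-free property $\nabla\cdot\bX_h\subset Q_h$ and the $\bH^1$-orthogonality that defines $\bR_h$. A preparatory observation will be central: under these two hypotheses the divergence operator admits a bounded right-inverse, in the sense that for every $q_h\in Q_h$ there is a $\bw_h\in\bX_h$ with $\nabla\cdot\bw_h=q_h$ as $L^2$ functions and $\|\nabla\bw_h\|\le\beta^{-1}\|q_h\|$. The inf-sup condition alone delivers this identity in the variational sense $(\nabla\cdot\bw_h-q_h,\phi_h)=0$ for all $\phi_h\in Q_h$, together with the stability $\|\nabla\bw_h\|\le\beta^{-1}\|q_h\|$; the property $\nabla\cdot\bX_h\subset Q_h$ allows one to choose the admissible test $\phi_h=\nabla\cdot\bw_h-q_h\in Q_h$ and thereby promote the variational identity to an $L^2$ identity.

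With this at hand, I would pick an arbitrary $\bv_h\in\bR_h$ and set $q_h:=\nabla\cdot\bv_h$. Then $q_h\in Q_h$, because $\nabla\cdot\bX_h\subset Q_h$ and $\int_\cD\nabla\cdot\bv_h=0$ by the Dirichlet trace built into $\bX_h$. Using the right-inverse, produce $\bw_h\in\bX_h$ with $\nabla\cdot\bw_h=q_h$ and $\|\nabla\bw_h\|\le\beta^{-1}\|\nabla\cdot\bv_h\|$. Since $\nabla\cdot(\bv_h-\bw_h)=0$ identically, $(\nabla\cdot(\bv_h-\bw_h),q_h)=0$ for every $q_h\in Q_h$, so $\bv_h-\bw_h\in\bV_h$.

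The rest is a one-line orthogonal-projection argument. Since $\bv_h\in\bV_h^\perp$ in the $\bH^1(\cD)$ inner product and $\bv_h-\bw_h\in\bV_h$,
\begin{equation*}
\|\bv_h\|_{H^1}^2=(\bv_h,\bw_h)_{H^1}+(\bv_h,\bv_h-\bw_h)_{H^1}=(\bv_h,\bw_h)_{H^1}\le\|\bv_h\|_{H^1}\|\bw_h\|_{H^1},
\end{equation*}
so $\|\bv_h\|_{H^1}\le\|\bw_h\|_{H^1}$. Combining $\|\nabla\bv_h\|\le\|\bv_h\|_{H^1}$, the Poincar\'e inequality $\|\bw_h\|_{H^1}\le C\|\nabla\bw_h\|$ on $\bH_0^1(\cD)$, and the stability bound $\|\nabla\bw_h\|\le\beta^{-1}\|\nabla\cdot\bv_h\|$, I conclude $\|\nabla\bv_h\|\le(C/\beta)\|\nabla\cdot\bv_h\|$, which yields the lemma with $C_R:=C/\beta$ independent of $h$.

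The main obstacle is the preparatory step: extracting a $\bw_h$ that satisfies $\nabla\cdot\bw_h=q_h$ exactly (not merely weakly) with an operator norm independent of $h$. The divergence-free property $\nabla\cdot\bX_h\subset Q_h$ enjoyed by the Scott-Vogelius pair is essential both here, to upgrade the weak identity to an $L^2$ identity, and at the subsequent step placing $\bv_h-\bw_h$ into $\bV_h$; without it the clean decomposition $\bv_h=\bw_h+(\bv_h-\bw_h)$ with $\bv_h-\bw_h\in\bV_h$ would not be available, and the constant would in general depend on $h$.
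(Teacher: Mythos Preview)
Your argument is correct. The paper does not supply its own proof of this lemma, deferring instead to the references \cite{GR86,linke2017connection}; what you have written is essentially the standard argument found there: use the inf-sup condition to build a bounded right-inverse of the discrete divergence, upgrade the weak identity $\nabla\cdot\bw_h=q_h$ to an $L^2$ identity via $\nabla\cdot\bX_h\subset Q_h$, and then exploit the $\bH^1$-orthogonality defining $\bR_h$ together with Poincar\'e. There is nothing to add.
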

\begin{proof}
See \cite{GR86, linke2017connection}
\end{proof}

\begin{assumption}\label{assump1}
We assume there exists a constant $C_*$ which is independent of $h$, and $\Delta t$, such that for sufficiently small $h$ for a fixed mesh and fixed $\Delta t$ as $\gamma\rightarrow\infty$, the solution of the Algorithm \ref{SPP-FEM} satisfies
\begin{align}
    \max_{1\le n\le M}\|\bhu_{j,h}^n\|_{L^\infty}\le C_*,\hspace{2mm}\text{for all}\hspace{2mm}j=1,2,\cdots,J.
\end{align}
\end{assumption}
The Assumption \ref{assump1} is proved later in Lemma \ref{uniform-boundedness-lemma-proof}. The use of the Assumption \ref{assump1} in the convergence analysis is followed by \cite{mohebujjaman2024decoupled}. We define $\alpha_{\min}:=\min\limits_{1\le j\le J}\alpha_j$.

\begin{theorem}[Convergence]\label{gamma-convergence}
Let $(\bu_{j,h}^{n+1}
,p_{j,h}^{n+1})$, and $(\bhu_{j,h}^{n+1},\tilde{\bu}_{j,h}^{n+1}
,\hp_{j,h}^{n+1})$ are the solutions to the Algorithm \ref{coupled-alg}, and Algorithm \ref{SPP-FEM}, respectively, for $n=0,1,\cdots,M-1$. We then have for a given $\gamma>0$ and $\mu>1$:
\begin{align}
    \Big(\Delta t\sum_{n=1}^M\|\nabla\hspace{-1mm}\lab\bu_h\rab^n&-\nabla\hspace{-1mm}\lab\bhu_h\rab^n\|^2\Big)^{\frac12}\le\frac{C}{\gamma} exp\lp\frac{C}{\alpha_{\min}} \left(1+\frac{\Delta t}{h^3}\right)\rp\lp  \Delta t\sum_{n=0}^{M-1}\sum_{j=1}^J\|p_{j,h}^{n+1}-\hp_{j,h}^n\|^2\rp^\frac12\nonumber\\&\times\Bigg[1+\frac{1}{\alpha_{\min}}exp\left(\frac{C}{\alpha_{\min}h^2}+\frac{C}{\Delta t}\right)\left(\frac{1}{\alpha_{\min}^2\Delta t}+\frac{1}{\Delta t}+\Delta t\right)\Bigg]^\frac12.\label{convergence-thm}
\end{align}
\end{theorem}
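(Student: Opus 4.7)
The plan is to form an error equation between Algorithms \ref{coupled-alg} and \ref{SPP-FEM}, test with the per-realization velocity error, exploit the pointwise divergence-free property of Scott--Vogelius to absorb the grad--div penalty against a pressure-splitting defect whose coefficient scales like $1/\gamma$, bound the advective and EEV contributions using Lemma \ref{lemma1} and Assumption \ref{assump1}, apply the discrete Gr\"onwall Lemma \ref{dgl}, and pass from the per-realization estimate to the ensemble-mean bound of \eqref{convergence-thm}.

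Setting $\be_j^n:=\bu_{j,h}^n-\bhu_{j,h}^n$ and $\be_j^{\prime n}:=\bu_{j,h}^{\prime n}-\bhu_{j,h}^{\prime n}$, subtracting the combined SPP-EEV equation \eqref{hatweak1} from \eqref{couple-eqn-1} gives, for every $\bchi_h\in\bX_h$,
\begin{align*}
\Big(\frac{\be_j^{n+1}-\be_j^n}{\Delta t},\bchi_h\Big)&+(\Bar{\nu}\nabla\be_j^{n+1},\nabla\bchi_h)-\gamma(\nabla\cdot\bhu_{j,h}^{n+1},\nabla\cdot\bchi_h)\\
&-(p_{j,h}^{n+1}-\hp_{j,h}^n,\nabla\cdot\bchi_h)=\mathcal{N}_j^n(\bchi_h),
\end{align*}
where $\mathcal{N}_j^n$ collects the differences of the advection, fluctuation, and EEV forms. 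Choosing $\bchi_h=\be_j^{n+1}$ and invoking $\nabla\cdot\bu_{j,h}^{n+1}=0$ (Scott--Vogelius), one has $\nabla\cdot\be_j^{n+1}=-\nabla\cdot\bhu_{j,h}^{n+1}$, so the grad--div contribution becomes $+\gamma\|\nabla\cdot\bhu_{j,h}^{n+1}\|^2$ on the left, and the pressure-splitting defect becomes $(p_{j,h}^{n+1}-\hp_{j,h}^n,\nabla\cdot\bhu_{j,h}^{n+1})$, which Young's inequality splits into $\tfrac{\gamma}{2}\|\nabla\cdot\bhu_{j,h}^{n+1}\|^2+\tfrac{1}{2\gamma}\|p_{j,h}^{n+1}-\hp_{j,h}^n\|^2$. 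Half of the grad--div term is absorbed, and the residual $1/(2\gamma)$ is precisely what produces the $1/\gamma$ prefactor of \eqref{convergence-thm}.

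For the advective nonlinearity, the decomposition $b^*(\lab\bu_h\rab^n,\bu_{j,h}^{n+1},\be_j^{n+1})-b^*(\lab\bhu_h\rab^n,\bhu_{j,h}^{n+1},\be_j^{n+1})=b^*(\lab\bu_h\rab^n-\lab\bhu_h\rab^n,\bu_{j,h}^{n+1},\be_j^{n+1})+b^*(\lab\bhu_h\rab^n,\be_j^{n+1},\be_j^{n+1})$ kills the second piece by skew-symmetry of $b^*$, and \eqref{nonlinearbound3} combined with Lemma \ref{lemma1} bounds the first by $CC_*\|\lab\bu_h\rab^n-\lab\bhu_h\rab^n\|\|\nabla\be_j^{n+1}\|$. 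The fluctuation trilinear difference is treated analogously, with Assumption \ref{assump1} supplying the needed $L^\infty$ control on $\bhu_{j,h}^{n}$. The EEV cross-term is manipulated through the algebraic identity
\[
\nu_T(\bu'_h,t^n)\nabla\bu_{j,h}^{n+1}-\nu_T(\bhu'_h,t^n)\nabla\bhu_{j,h}^{n+1}=\nu_T(\bu'_h,t^n)\nabla\be_j^{n+1}+\mu\Delta t\sum_{k=1}^{J}\bigl[(\bu_{k,h}^{\prime n}+\bhu_{k,h}^{\prime n})\cdot\be_k^{\prime n}\bigr]\nabla\bhu_{j,h}^{n+1};
\]
the first term has a definite sign and joins the dissipation, while the second is estimated with $L^\infty$ control together with the inverse estimate $\|\nabla\bhu_{j,h}^{n+1}\|_{L^\infty}\le Ch^{-d/2}\|\nabla\bhu_{j,h}^{n+1}\|$, and squaring produces the factor $\Delta t/h^3$ appearing in the exponent of \eqref{convergence-thm}.

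After polarization and Young's inequality one arrives at
\[
\|\be_j^{n+1}\|^2-\|\be_j^n\|^2+\alpha_j\Delta t\|\nabla\be_j^{n+1}\|^2\le\frac{C\Delta t}{\gamma}\|p_{j,h}^{n+1}-\hp_{j,h}^n\|^2+C\Delta t\Big(1+\tfrac{\Delta t}{h^3}\Big)\bigl(\|\be_j^n\|^2+\|\nabla(\lab\bu_h\rab^n-\lab\bhu_h\rab^n)\|^2\bigr)+\text{lower-order},
\]
summing over $n=0,\ldots,M-1$ and applying Lemma \ref{dgl} yields the exponential factor $\exp\bigl(\tfrac{C}{\alpha_{\min}}(1+\tfrac{\Delta t}{h^3})\bigr)$. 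Averaging the per-realization bounds over $j$ and using the convex-combination inequality $\|\nabla(\lab\bu_h\rab^n-\lab\bhu_h\rab^n)\|^2\le\tfrac{1}{J}\sum_j\|\nabla\be_j^n\|^2$ converts the left-hand side into the mean-difference norm and the right-hand side into $\sum_j\|p_{j,h}^{n+1}-\hp_{j,h}^n\|^2$, matching \eqref{convergence-thm}. The hardest step is the EEV cross-term: keeping the inverse inequality used there sharp enough that the $h$-exponent in the Gr\"onwall factor does not exceed $\Delta t/h^3$, while still retaining enough viscous dissipation on the left to absorb the mean-difference coupling before Gr\"onwall is applied.
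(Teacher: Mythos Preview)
Your argument has a genuine gap that costs you a factor of $\sqrt{\gamma}$. After testing with $\be_j^{n+1}$ and applying Young's inequality to the pressure defect, the only gradient control you retain on the left comes from the viscous term $\alpha_j\Delta t\|\nabla\be_j^{n+1}\|^2$, while the right-hand side carries $\tfrac{\Delta t}{\gamma}\|p_{j,h}^{n+1}-\hp_{j,h}^n\|^2$. Summing and applying Gr\"onwall therefore yields
\[
\Delta t\sum_{n=1}^M\|\nabla\be_j^{n}\|^2 \le \frac{C}{\gamma}\exp(\cdots)\,\Delta t\sum_{n,j}\|p_{j,h}^{n+1}-\hp_{j,h}^n\|^2,
\]
which after taking square roots gives only an $O(\gamma^{-1/2})$ bound on the mean-gradient difference, not the $O(\gamma^{-1})$ rate stated in \eqref{convergence-thm}. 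The grad--div term you kept, $\tfrac{\gamma}{2}\|\nabla\cdot\bhu_{j,h}^{n+1}\|^2$, does scale like $O(\gamma^{-2})$ after division, but it controls only the \emph{divergence} of the error, not its full gradient; and there is no inequality $\|\nabla\bv_h\|\le C\|\nabla\cdot\bv_h\|$ valid on all of $\bX_h$.

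The missing idea is the $H^1$-orthogonal splitting $\be_j^{n+1}=\be_{j,0}^{n+1}+\be_{j,\bR}^{n+1}$ with $\be_{j,0}^{n+1}\in\bV_h$ and $\be_{j,\bR}^{n+1}\in\bR_h:=\bV_h^\perp$, together with Lemma \ref{CR-lemma}, which \emph{does} give $\|\nabla\bv_h\|\le C_R\|\nabla\cdot\bv_h\|$ on $\bR_h$. The paper proceeds in two steps. In Step 1 one tests with $\be_j^{n+1}$ but, crucially, keeps $\gamma\|\nabla\cdot\be_{j,\bR}^{n+1}\|^2$ on the left after Young; Gr\"onwall then gives $\gamma\Delta t\sum\|\nabla\cdot\be_{j,\bR}^{n}\|^2\le\tfrac{C}{\gamma}\exp(\cdots)\sum\|p-\hp\|^2$, and Lemma \ref{CR-lemma} upgrades this to $\Delta t\sum\|\nabla\be_{j,\bR}^{n}\|^2=O(\gamma^{-2})$. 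In Step 2 one tests with $\be_{j,0}^{n+1}\in\bV_h$: here both the grad--div and the pressure-defect terms vanish, so the only source is $\|\nabla\be_{j,\bR}^{n+1}\|^2$ (through the time-derivative splitting and the EEV cross-term), which is already $O(\gamma^{-2})$; a second Gr\"onwall then gives $\Delta t\sum\|\nabla\be_{j,0}^{n}\|^2=O(\gamma^{-2})$, and this second step is what produces the extra bracketed factor $\bigl[1+\tfrac{1}{\alpha_{\min}}\exp(\tfrac{C}{\alpha_{\min}h^2}+\tfrac{C}{\Delta t})(\cdots)\bigr]$ in \eqref{convergence-thm}. Combining the two pieces by triangle inequality yields the full $O(\gamma^{-1})$ rate on the unsquared norm. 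Your single-test approach cannot recover this rate without the decomposition.
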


\begin{remark}
 The above theorem states the first order convergence of the penalty-projection algorithm to the Algorithm \ref{coupled-alg} as $\gamma\rightarrow\infty$ for a fixed mesh and time-step size.
\end{remark}
\begin{proof}
Denote $\be_{j}^{n+1}:=\bu_{j,h}^{n+1}-\bhu_{j,h}^{n+1}$ and use the following $H^1$-orthogonal decomposition of the error:
$$\be_{j}^{n+1}:=\be_{j,0}^{n+1}+\be_{j,\bR}^{n+1},$$
with $\be_{j,0}^{n+1}\in\bV_h$, and $\be_{j,\bR}^{n+1}\in\bR_h$, for $n=0,1,\cdots,M-1$.\\
\textbf{Step 1:} Estimate of $\be_{j,\bR}^{n+1}$: Subtracting the equation \eqref{hatweak1} from \eqref{couple-eqn-1}, produces to
\begin{align}
    \frac{1}{\Delta t}\Big(\be_j^{n+1}&-\be_j^n,\bchi_{h}\Big)+\left(\Bar{\nu}\nabla \be_j^{n+1},\nabla \bchi_{h}\right)+\gamma\Big(\nabla\cdot\be_{j,\bR}^{n+1},\nabla\cdot\bchi_{h}\Big)+b^*\Big(\hspace{-1.5mm}\lab\bhu_h\rab^n,\be^{n+1}_j,\bchi_{h}\Big)\nonumber\\&+b^*\Big(\hspace{-1.5mm}\lab\be\rab^n,\bu_{j,h}^{n+1},\bchi_{h}\Big)-\Big(p_{j,h}^{n+1}-\hp_{j,h}^n,\nabla\cdot\bchi_{h}\Big)+2\mu\Delta t\Big((\hl^n)^2\nabla\be_j^{n+1},\nabla\bchi_{h}\Big)\nonumber\\&+2\mu\Delta t\Big(\big\{(l^n)^2-(\hl^n)^2\big\}\nabla\bu_{j,h}^{n+1},\nabla\bchi_{h}\Big)=-b^*\Big(\bhu^{'n}_{j,h},\be^n_j,\bchi_{h}\Big)-b^*\Big(\be^{'n}_{j},\bu_{j,h}^n,\bchi_{h}\Big)\nonumber\\&-\Big(\nu_j^{'}\nabla \be_{j}^{n},\nabla\bchi_{h}\Big).\label{step-1-eqn-1}
\end{align}
Take $\bchi_{h}=\be_j^{n+1}$ in \eqref{step-1-eqn-1} which yields $b^*\Big(\hspace{-1.5mm}\lab\bhu_h\rab^n,\be^{n+1}_j,\bchi_{h}\Big)=0$, and use polarization identity, to get
\begin{align}
    \frac{1}{2\Delta t}\Big(\|\be_j^{n+1}\|^2-\|\be_j^n\|^2+\|\be_j^{n+1}-\be_j^n\|^2\Big)+\|\Bar{\nu}^\frac12\nabla \be_j^{n+1}\|^2+\gamma\|\nabla\cdot\be_{j,\bR}^{n+1}\|^2\nonumber\\+b^*\Big(\hspace{-1.5mm}\lab\be\rab^n,\bu_{j,h}^{n+1},\be_j^{n+1}\Big)-\Big(p_{j,h}^{n+1}-\hp_{j,h}^n,\nabla\cdot\be_{j,\bR}^{n+1}\Big)+2\mu\Delta t\|\hl^n\nabla\be_j^{n+1}\|^2\nonumber\\+2\mu\Delta t\Big(\big\{(l^n)^2-(\hl^n)^2\big\}\nabla\bu_{j,h}^{n+1},\nabla\be_j^{n+1}\Big)=-b^*\Big(\bhu^{'n}_{j,h},\be^n_j,\be_j^{n+1}\Big)\nonumber\\-b^*\Big(\be^{'n}_{j},\bu_{j,h}^n,\be_j^{n+1}\Big)-\nu_j^{'}\Big(\nabla \be_{j}^{n},\nabla\be_j^{n+1}\Big).\label{pol-1}
\end{align}
Now, we find the bound of the terms in \eqref{pol-1} first. Similar as \eqref{fluc-bound-2}, we rearrange, use identity in \eqref{trilinear-identitiy}, Cauchy-Schwarz, H\"older's, Poincar\'e,  and \eqref{basic-ineq}-\eqref{basic-ineq-infinity} inequalities, in the following nonlinear term, to get
\begin{align}
b^*\Big(&\bhu^{'n}_{j,h},\be^n_j,\be_j^{n+1}\Big)= b^*\Big(\bhu^{'n}_{j,h},\be_j^{n+1},\be_j^{n+1}-\be^n_j\Big)\nonumber\\&=\left(\bhu^{'n}_{j,h}\cdot\nabla\be_j^{n+1}, \be_j^{n+1}-\be^n_j\right)+\frac12\left(\nabla\cdot\bhu^{'n}_{j,h},\be_j^{n+1}\cdot(\be_j^{n+1}-\be_j^n)\right)\nonumber\\&\le\|\bhu^{'n}_{j,h}\cdot\nabla\be_j^{n+1}\|\|\be_j^{n+1}-\be^n_j\|+\frac12\|\nabla\cdot\bhu^{'n}_{j,h}\|_{L^\infty}\|\be_j^{n+1}\|\|\be_j^{n+1}-\be_j^n\|\nonumber\\&\le \||\bhu^{'n}_{j,h}|\nabla\be_j^{n+1}\|\|\be_j^{n+1}-\be^n_j\|+C\|\nabla\cdot\bhu^{'n}_{j,h}\|_{L^\infty}\|\nabla\be_j^{n+1}\|\|\be_j^{n+1}-\be_j^n\|\nonumber\\&\le\|\hl^n\nabla\be_j^{n+1}\|\|\be_j^{n+1}-\be^n_j\|+C\|\nabla\cdot\bhu^{'n}_{j,h}\|_{L^\infty}\|\nabla\be_j^{n+1}\|\|\be_j^{n+1}-\be_j^n\|\nonumber\\&\le \frac{\alpha_j}{8}\|\nabla\be_j^{n+1}\|^2+\Delta t\|\hl^n\nabla\be_j^{n+1}\|^2+\left(\frac{1}{4\Delta t}+\frac{C}{\alpha_j}\|\nabla\cdot\bhu_{j,h}^{'n}\|_{L^\infty}^2\right)\|\be_j^{n+1}-\be_j^n\|^2.\label{before-young-error}
\end{align}
Applying H\"older's and Young’s inequalities, we have
\begin{align*}
    -\Big(\nu_j^{'}\nabla \be_{j}^{n},\nabla\be_j^{n+1}\Big)&\le \frac{\|\nu_j^{'}\|_\infty}{2}\Big(\|\nabla \be_{j}^{n}\|^2+\|\nabla \be_j^{n+1}\|^2\Big).
\end{align*}
Applying Cauchy-Schwarz and Young’s inequalities, we have
\begin{align*}
    \Big(p_{j,h}^{n+1}-\hp_{j,h}^n,\nabla\cdot\be_{j,\bR}^{n+1}\Big)&\le\frac{1}{2\gamma}\|p_{j,h}^{n+1}-\hp_{j,h}^n\|^2+\frac{\gamma}{2}\|\nabla\cdot\be_{j,\bR}^{n+1}\|^2.
\end{align*}
Use the trilinear bound in \eqref{nonlinearbound3}, estimate in Lemma \ref{lemma1}, and Young's inequalities, provides
\begin{align*}
b^*\Big(\hspace{-1.5mm}\lab\be\rab^n,\bu_{j,h}^{n+1},\be_j^{n+1}\Big)&\le C \|\hspace{-1.mm}\lab\be\rab^n\hspace{-1.mm}\|\left(\|\nabla\bu_{j,h}^{n+1}\|_{L^3}+\|\bu_{j,h}^{n+1}\|_{L^\infty}\right)\|\be_j^{n+1}\|\\&\le CC_*\|\hspace{-1.mm}\lab\be\rab^n\hspace{-1.mm}\|\|\nabla\be_j^{n+1}\|\\
    &\le \frac{\alpha_j}{8}\|\nabla\be_j^{n+1}\|^2+\frac{C}{\alpha_j}\|\hspace{-1.mm}\lab\be\rab^n\hspace{-1.mm}\|^2,\\
    b^*\Big(\be^{'n}_{j},\bu_{j,h}^n,\be_j^{n+1}\Big)&\le C\|\be^{'n}_{j}\|\left(\|\nabla\bu_{j,h}^n\|_{L^3}+\|\bu_{j,h}^n\|_{L^\infty}\right)\|\be_j^{n+1}\|\\&\le CC_* \|\be^{'n}_{j}\|\|\nabla\be_j^{n+1}\|\\&\le \frac{\alpha_j}{8}\|\nabla\be_j^{n+1}\|^2+\frac{C}{\alpha_j}\|\be^{'n}_{j}\|^2.
\end{align*}
For the second non-linear term, we apply H\"older’s and triangle inequalities, stability estimate of Algorithm \ref{coupled-alg}, uniform boundedness in Lemma \ref{lemma1} and in Assumption \ref{assump1}, Agmon’s \cite{Robinson2016Three-Dimensional}, discrete inverse, and Young's inequalities,  to get
\begin{align}
    2\mu\Delta t\Big(\big\{(l^n)^2-&(\hl^n)^2\big\}\nabla\bu_{j,h}^{n+1},\nabla\be_{j}^{n+1}\Big)\nonumber\\
    &\le 2\mu\Delta t\|(l^n)^2-(\hl^n)^2\|_{L^\infty}\|\nabla\bu_{j,h}^{n+1}\|\|\nabla\be_j^{n+1}\|\nonumber\\
    &=2\mu\Delta t \|\sum_{i=1}^J\left(|\bu_{i,h}^{'n}|^2-|\bhu_{i,h}^{'n}|^2\right)\|_{L^\infty}\|\nabla\bu_{j,h}^{n+1}\|\|\nabla\be_j^{n+1}\|\nonumber\\
    &\le 2\mu\Delta t \sum_{i=1}^J\|(\bu_{i,h}^{'n}-\bhu_{i,h}^{'n})\cdot(\bu_{i,h}^{'n}+\bhu_{i,h}^{'n})\|_{L^\infty}\|\nabla\bu_{j,h}^{n+1}\|\|\nabla\be_j^{n+1}\|\nonumber\\&\le 2\mu\Delta t \sum_{i=1}^J\|\bu_{i,h}^{'n}-\bhu_{i,h}^{'n}\|_{L^\infty}\|\bu_{i,h}^{'n}+\bhu_{i,h}^{'n}\|_{L^\infty}\|\nabla\bu_{j,h}^{n+1}\|\|\nabla\be_j^{n+1}\|\nonumber\\
    &\le C\Delta t^{\frac{1}{2}} \sum_{i=1}^J\|\be_i^{'n}\|_{L^\infty}\left(\|\bu_{i,h}^{'n}\|_{L^\infty}+\|\bhu_{i,h}^{'n}\|_{L^\infty}\right)\|\nabla\be_j^{n+1}\|\nonumber\\
    &\le C\Delta t^{\frac{1}{2}}\sum_{i=1}^J\|\be_i^n\|_{L^\infty}\|\nabla\be_j^{n+1}\|\le C\Delta t^\frac{1}{2}h^{-\frac{3}{2}}\sum_{i=1}^J\|\be_i^n\|\|\nabla\be_j^{n+1}\|\nonumber\\
    &\le\frac{\alpha_j}{8}\|\nabla\be_j^{n+1}\|^2+\frac{C\Delta t}{\alpha_jh^3}\sum_{i=1}^J\|\be_i^n\|^2.\label{mixing-length-difference}
\end{align}
Using the above estimates in \eqref{pol-1}, and reducing, produces
\begin{align}
    \frac{1}{2\Delta t}\Big(\|\be_j^{n+1}\|^2-\|\be_j^n\|^2\Big)+\left(\frac{1}{4\Delta t}-\frac{C}{\alpha_j}\|\nabla\cdot\bhu_{j,h}^{'n}\|_{L^\infty}^2\right)\|\be_j^{n+1}-\be_j^n\|^2+\frac{\Bar{\nu}_{\min}}{2}\|\nabla \be_j^{n+1}\|^2\nonumber\\+\frac{\gamma}{2}\|\nabla\cdot\be_{j,\bR}^{n+1}\|^2+(2\mu-1)\Delta t\|\hl^n\nabla\be_j^{n+1}\|^2\le \frac{\|\nu_j^{'}\|_\infty}{2}\|\nabla \be_{j}^{n}\|^2+\frac{1}{2\gamma}\|p_{j,h}^{n+1}-\hp_{j,h}^n\|^2\nonumber\\+\frac{C}{\alpha_j}\Big(\|\hspace{-1.mm}\lab\be\rab^n\hspace{-1.mm}\|^2+\|\be^{'n}_{j}\|^2\Big)+\frac{C\Delta t}{\alpha_jh^3}\sum_{i=1}^J\|\be_i^n\|^2.\label{upperbound1}
\end{align}
Choose the tuning parameter $\mu>\frac{1}{2}$, and time-step size  $\Delta t<\frac{C\alpha_j}{\max\limits_{1\le n\le M}\{\|\nabla\cdot\bhu_{j,h}^{'n}\|_{L^\infty}^2\}}$ and drop non-negative terms from left-hand-side, and rearrange
\begin{align}
    \frac{1}{2\Delta t}\Big(\|\be_j^{n+1}\|^2-\|\be_j^n\|^2\Big)+\frac{\Bar{\nu}_{\min}}{2}\left(\|\nabla \be_j^{n+1}\|^2-\|\nabla \be_j^{n}\|^2\right)+\frac{\alpha_j}{2}\|\nabla \be_j^{n}\|^2+\frac{\gamma}{2}\|\nabla\cdot\be_{j,\bR}^{n+1}\|^2\nonumber\\\le \frac{1}{2\gamma}\|p_{j,h}^{n+1}-\hp_{j,h}^n\|^2+\frac{C}{\alpha_j}\Big(\|\hspace{-1.mm}\lab\be\rab^n\hspace{-1.mm}\|^2+\|\be^{'n}_{j}\|^2\Big)+\frac{C\Delta t}{\alpha_jh^3}\sum_{i=1}^J\|\be_i^n\|^2.
\end{align}
Using triangle, and Young's inequalities, then multiply both sides by $2\Delta t$, and sum over the time steps $n=0,1,\cdots,M-1$, to obtain
\begin{align}
     \|\be_j^{M}\|^2+\alpha_j\Delta t\sum_{n=1}^M\|\nabla\be_j^n\|^2+\gamma\Delta t\sum_{n=1}^{M}\|\nabla\cdot\be_{j,\bR}^{n}\|^2\le\frac{\Delta t}{\gamma}\sum_{n=0}^{M-1}\|p_{j,h}^{n+1}-\hp_{j,h}^n\|^2\nonumber\\+\left(\frac{C}{J^2\alpha_j}\Delta t+\frac{C\Delta t^2}{\alpha_jh^3}\right)\sum_{n=1}^{M-1}\sum_{j=1}^J\|\be_j^n\|^2+\frac{C}{\alpha_j}\Delta t\sum_{n=1}^{M-1}\|\be_j^n\|^2.
\end{align} 
 Summing over $j=1,2,\cdots, J$, we have
\begin{align}
\sum_{j=1}^J\|\be_j^{M}\|^2+\alpha_{\min}\Delta t\sum_{n=1}^M\sum_{j=1}^J\|\nabla\be_j^n\|^2+\gamma\Delta t\sum_{n=1}^{M}\sum_{j=1}^J\|\nabla\cdot\be_{j,\bR}^{n}\|^2\nonumber\\\le\frac{\Delta t}{\gamma}\sum_{n=0}^{M-1}\sum_{j=1}^J\|p_{j,h}^{n+1}-\hp_{j,h}^n\|^2+\Delta t\sum_{n=1}^{M-1}\frac{C}{\alpha_{\min}}\left(1+\frac{1}{J}+\frac{J\Delta t}{h^3}\right)\sum_{j=1}^J\|\be_j^n\|^2.
\end{align} 
Apply discrete Gr\"onwall inequality given in Lemma \ref{dgl}, to get 
\begin{align}
\sum_{j=1}^J\|\be_j^{M}\|^2+\alpha_{\min}\Delta t\sum_{n=1}^M\sum_{j=1}^J\|\nabla\be_j^n\|^2+\gamma\Delta t\sum_{n=1}^{M}\sum_{j=1}^J\|\nabla\cdot\be_{j,\bR}^{n}\|^2\nonumber\\\le \frac{\Delta t}{\gamma} exp\lp \frac{CT}{\alpha_{\min}}\left(1+\frac{\Delta t}{h^3}\right)\rp\sum_{n=0}^{M-1}\sum_{j=1}^J\|p_{j,h}^{n+1}-\hp_{j,h}^n\|^2.\label{after-gronwall}
\end{align}
Using Lemma \ref{CR-lemma} with \eqref{after-gronwall} yields the following bound
\begin{align}
    \Delta t\sum_{n=1}^{M}\sum_{j=1}^J\|\nabla\be_{j,\bR}^{n}\|^2\le C_R^2\Delta t\sum_{n=1}^{M}\sum_{j=1}^J\|\nabla\cdot\be_{j,\bR}^{n}\|^2\nonumber\\\le\frac{C_R^2}{\gamma^2} exp\lp \frac{C}{\alpha_{\min}}\left(1+\frac{\Delta t}{h^3}\right)\rp\lp\Delta t\sum_{n=0}^{M-1}\sum_{j=1}^J\|p_{j,h}^{n+1}-\hp_{j,h}^n\|^2\rp.\label{step-1-bound}
\end{align}
\textbf{Step 2:} Estimate of $\be_{j,0}^{n}$: To find a bound on $\Delta t\sum\limits_{n=1}^{M}\sum\limits_{j=1}^J\|\nabla\be_{j,0}^{n}\|^2,$ take $\bchi_{h}=\be_{j,0}^{n+1}$ in \eqref{step-1-eqn-1}, which yields 
\begin{align}
    \frac{1}{\Delta t}\Big(\be_j^{n+1}-\be_j^n,\be_{j,0}^{n+1}\Big)+\|\Bar{\nu}^\frac12\nabla \be_{j,0}^{n+1}\|^2+b^*\Big(\hspace{-1.5mm}\lab\bhu_h\rab^n,\be^{n+1}_{j,\bR},\be_{j,0}^{n+1}\Big)+b^*\Big(\hspace{-1.5mm}\lab\be\rab^n,\bu_{j,h}^{n+1},\be_{j,0}^{n+1}\Big)\nonumber\\+2\mu\Delta t\Big((\hl^n)^2\nabla\be_j^{n+1},\nabla\be_{j,0}^{n+1}\Big)+2\mu\Delta t\Big(\big\{(l^n)^2-(\hl^n)^2\big\}\nabla\bu_{j,h}^{n+1},\nabla\be_{j,0}^{n+1}\Big)\nonumber\\=-b^*\Big(\bhu^{'n}_{j,h},\be^n_j,\be_{j,0}^{n+1}\Big)-b^*\Big(\be^{'n}_{j},\bu_{j,h}^n,\be_{j,0}^{n+1}\Big)-\Big(\nu_j^{'}\nabla \be_{j,0}^{n},\nabla\be_{j,0}^{n+1}\Big).\label{step-2-eqn-1}
\end{align}
Using the bound in \eqref{nonlinearbound} to the first trilinear form,  and the bound in \eqref{nonlinearbound3} to the second and fourth trilinear forms of 
		\eqref{step-2-eqn-1}, to obtain
\begin{align}
    \frac{1}{\Delta t}\Big(\be_j^{n+1}-\be_j^n,\be_{j,0}^{n+1}\Big)+\|\Bar{\nu}^\frac12\nabla \be_{j,0}^{n+1}\|^2+2\mu\Delta t\|\hl^n\nabla\be_{j,0}^{n+1}\|^2\nonumber\\\le C\|\nabla\hspace{-1mm}\lab\bhu_h\rab^n\hspace{-1mm}\|\|\nabla\be^{n+1}_{j,\bR}\|\|\nabla\be_{j,0}^{n+1}\|+C\|\hspace{-1.mm}\lab\be\rab^n\hspace{-1.mm}\|\left(\|\nabla\bu_{j,h}^{n+1}\|_{L^3}+\|\bu_{j,h}^{n+1}\|_{L^\infty}\right)\|\nabla\be_{j,0}^{n+1}\|\nonumber\\-2\mu\Delta t\Big((\hl^n)^2\nabla\be_{j,\bR}^{n+1},\nabla\be_{j,0}^{n+1}\Big)-2\mu\Delta t\Big(\big\{(l^n)^2-(\hl^n)^2\big\}\nabla\bu_{j,h}^{n+1},\nabla\be_{j,0}^{n+1}\Big)\nonumber\\-b^*\Big(\bhu^{'n}_{j,h},\be^n_j,\be_{j,0}^{n+1}\Big)+C\|\be^{'n}_{j}\|\left(\|\nabla\bu_{j,h}^n\|_{L^3}+\|\bu_{j,h}^n\|_{L^\infty}\right)\|\nabla\be_{j,0}^{n+1}\|-\Big(\nu_j^{'}\nabla \be_{j,0}^{n},\nabla\be_{j,0}^{n+1}\Big).
\end{align}
\begin{comment}
\begin{align*}
    b\Big(\bhw^{'n}_{j,h},\be^n_j,\be_{j,0}^{n+1}\Big)=b\Big(\bhw^{'n}_{j,h},\be^n_j,\be_{j}^{n+1}\Big)-b\Big(\bhw^{'n}_{j,h},\be^n_j,\be_{j,\bR}^{n+1}\Big)=-b\Big(\bhw^{'n}_{j,h},\be_{j,0}^{n+1},\be^n_j\Big)
\end{align*}
\end{comment}
Similar as \eqref{fluc-bound-2}, we rearrange, use identity in \eqref{trilinear-identitiy}, Cauchy-Schwarz, H\"older's, Poincar\'e,  and \eqref{basic-ineq} inequalities, in the following trilinear form, to get
\begin{align}    -b^*\Big(\bhu^{'n}_{j,h},\be^n_j,\be_{j,0}^{n+1}\Big)&=b^*\Big(\bhu^{'n}_{j,h},\be_{j,0}^{n+1},\be^n_j\Big)\nonumber\\&=\left(\bhu^{'n}_{j,h}\cdot\nabla\be_{j,0}^{n+1},\be^n_j\right)+\frac12\left(\nabla\cdot\bhu^{'n}_{j,h},\be_{j,0}^{n+1}\cdot\be^n_j\right)\nonumber\\&\le\|\bhu^{'n}_{j,h}\cdot\nabla\be_{j,0}^{n+1}\|\|\be^n_j\|+\frac12\|\nabla\cdot\bhu^{'n}_{j,h}\|_{L^\infty}\|\be_{j,0}^{n+1}\|\|\be_j^n\|\nonumber\\&\le \|\hl^n\nabla\be_{j,0}^{n+1}\|\|\be_j^n\|+C\|\nabla\cdot\bhu^{'n}_{j,h}\|_{L^\infty}\|\nabla\be_{j,0}^{n+1}\|\|\be_j^n\|\nonumber\\&\le\Delta t\|\hl^n\nabla\be_{j,0}^{n+1}\|+\frac{\alpha_j}{10}\|\nabla\be_{j,0}^{n+1}\|^2+\left(\frac{1}{4\Delta t}+\frac{C}{\alpha_j}\|\nabla\cdot\bhu^{'n}_{j,h}\|_{L^\infty}^2\right)\|\be_j^n\|^2.\label{before-young-e0}
		\end{align}
Using H\"older’s and Young's inequalities, gives
\begin{align*}
    -\Big(\nu_j^{'}\nabla \be_{j,0}^{n},\nabla\be_{j,0}^{n+1}\Big)\le\frac{\|\nu_j^{'}\|_{\infty}}{2}\left(\|\nabla \be_{j,0}^{n}\|^2+\|\nabla\be_{j,0}^{n+1}\|^2\right).
\end{align*}
Using the above bound, stability estimate, and Lemma \ref{lemma1}, reducing and rearranging, we have
\begin{align}
    \frac{1}{\Delta t}\Big(\be_j^{n+1}-\be_j^n,\be_{j,0}^{n+1}\Big)+\|\Bar{\nu}^\frac12\nabla \be_{j,0}^{n+1}\|^2+\left(2\mu-1\right)\Delta t\|\hl^n\nabla\be_{j,0}^{n+1}\|^2\nonumber\\\le \frac{C}{(\Bar{\nu}_{\min}\Delta t)^\frac12}\|\nabla\be^{n+1}_{j,\bR}\|\|\nabla\be_{j,0}^{n+1}\|+CC_*\|\hspace{-1.mm}\lab\be\rab^n\hspace{-1.mm}\|\|\nabla\be_{j,0}^{n+1}\|+\left(\frac{1}{4\Delta t}+\frac{C}{\alpha_j}\|\nabla\cdot\bhu^{'n}_{j,h}\|_{L^\infty}^2\right)\|\be_j^n\|^2\nonumber\\+\frac{\alpha_j}{10}\|\nabla\be_{j,0}^{n+1}\|^2+2\mu\Delta t\Big|\Big((\hl^n)^2\nabla\be_{j,\bR}^{n+1},\nabla\be_{j,0}^{n+1}\Big)\Big|+2\mu\Delta t\Big|\Big(\big\{(l^n)^2-(\hl^n)^2\big\}\nabla\bu_{j,h}^{n+1},\nabla\be_{j,0}^{n+1}\Big)\Big|\nonumber\\+CC_*\|\be^{'n}_{j}\|\|\nabla\be_{j,0}^{n+1}\|+\frac{\|\nu_j^{'}\|_{\infty}}{2}\left(\|\nabla \be_{j,0}^{n}\|^2+\|\nabla\be_{j,0}^{n+1}\|^2\right).\label{before-time-derivative-estimate}
\end{align}
To evaluate the time-derivative term, we use polarization identity, Cauchy-Schwarz, Young's and Poincar\'e's inequalities 
\begin{align*}
    \frac{1}{\Delta t}\Big(\be_j^{n+1}-\be_j^n,\be_{j,0}^{n+1}\Big)&=\frac{1}{\Delta t}\Big(\be_j^{n+1}-\be_j^n,\be_{j}^{n+1}-\be_{j,\bR}^{n+1}\Big)\\&=\frac{1}{2\Delta t}\Big(\|\be_j^{n+1}-\be_j^n\|^2+\|\be_j^{n+1}\|^2-\|\be_j^n\|^2\Big)-\frac{1}{\Delta t}\Big(\be_j^{n+1}-\be_j^n,\be_{j,\bR}^{n+1}\Big)\\&\ge \frac{1}{2\Delta t}\Big(\|\be_j^{n+1}\|^2-\|\be_j^n\|^2\Big)-\frac{C}{\Delta t}\|\nabla\be_{j,\bR}^{n+1}\|^2.
\end{align*}
Plugging the above estimate into \eqref{before-time-derivative-estimate} and using Cauchy-Schwarz's, and Young's inequalities again, yields
\begin{align}
    \frac{1}{2\Delta t}\Big(\|\be_j^{n+1}\|^2-\|\be_j^n\|^2\Big)+\|\Bar{\nu}^\frac12\nabla \be_{j,0}^{n+1}\|^2+\left(2\mu-1\right)\Delta t\|\hl^n\nabla\be_{j,0}^{n+1}\|^2\nonumber\\\le \left(\frac{C}{\alpha_j^2\Delta t}+\frac{C}{\Delta t}\right)\|\nabla\be^{n+1}_{j,\bR}\|^2+\frac{C}{\alpha_j}\|\hspace{-1.mm}\lab\be\rab^n\hspace{-1.mm}\|^2+2\mu\Delta t\Big|\Big((\hl^n)^2\nabla\be_{j,\bR}^{n+1},\nabla\be_{j,0}^{n+1}\Big)\Big|\nonumber\\+\left(\frac{1}{4\Delta t}+\frac{C}{\alpha_j}\|\nabla\cdot\bhu^{'n}_{j,h}\|_{L^\infty}^2\right)\|\be_j^n\|^2+\frac{2\alpha_j}{5}\|\nabla\be_{j,0}^{n+1}\|^2+\frac{C}{\alpha_j}\|\be^{'n}_{j}\|^2\nonumber\\+2\mu\Delta t\Big|\Big(\big\{(l^n)^2-(\hl^n)^2\big\}\nabla\bu_{j,h}^{n+1},\nabla\be_{j,0}^{n+1}\Big)\Big|+\frac{\|\nu_j^{'}\|_{\infty}}{2}\Big(\|\nabla \be_{j,0}^{n}\|^2+\|\nabla\be_{j,0}^{n+1}\|^2\Big).\label{before-non-linear-bounds}
\end{align}
We now use Cauchy-Schwarz, and  Young's inequalities, uniform boundedness in Assumption \ref{assump1} (which holds true for sufficiently large $\gamma$), and the stability estimate, to obtain  
\begin{align*}
    2\mu\Delta t\Big|\Big((\hl^n)^2\nabla\be_{j,\bR}^{n+1},\nabla\be_{j,0}^{n+1}\Big)\Big|&\le 2\mu\Delta t\|\hl^n\nabla\be_{j,\bR}^{n+1}\|\|\hl^n\nabla\be_{j,0}^{n+1}\|\\
    &\le\mu\Delta t\|\hl^n\nabla\be_{j,\bR}^{n+1}\|^2+\mu\Delta t\|\hl^n\nabla\be_{j,0}^{n+1}\|^2\\
    &\le\mu\Delta t\|\hl^n\|_{L^\infty}^2\|\nabla\be_{j,\bR}^{n+1}\|^2+\mu\Delta t\|\hl^n\nabla\be_{j,0}^{n+1}\|^2\\
    &\le C\Delta t\|\nabla\be_{j,\bR}^{n+1}\|^2+\mu\Delta t\|\hl^n\nabla\be_{j,0}^{n+1}\|^2.
\end{align*}
We follow the same treatment as in \eqref{mixing-length-difference}, and get
\begin{align*}
    2\mu\Delta t\Big(\big\{(l^n)^2-(\hl^n)^2\big\}\nabla\bu_{j,h}^{n+1},\nabla\be_{j,0}^{n+1}\Big)\le\frac{\alpha_j}{10}\|\nabla\be_{j,0}^{n+1}\|^2+\frac{C\Delta t}{\alpha_jh^3}\sum_{i=1}^J\|\be_i^n\|^2.
\end{align*}
Use the above estimates in \eqref{before-non-linear-bounds}, assume $\mu\ge 1$ to drop non-negative term from left-hand-side, use triangle and Young's inequalities and reduce, then the equation \eqref{before-non-linear-bounds} becomes
\begin{align}
    \frac{1}{2\Delta t}\Big(\|\be_j^{n+1}\|^2-\|\be_j^n\|^2\Big)+\frac{\Bar{\nu}_{\min}}{2}\|\nabla \be_{j,0}^{n+1}\|^2\le C\left(\frac{1}{\alpha_j^2\Delta t}+\frac{1}{\Delta t}+\Delta t\right)\|\nabla\be^{n+1}_{j,\bR}\|^2\nonumber\\+\Big(\frac{C}{\alpha_jJ^2}+\frac{C\Delta t}{\alpha_jh^3}\Big)\sum_{i=1}^J\|\be_i^n\|^2+\left(\frac{C}{\alpha_jJ^2}+\frac{1}{4\Delta t}+\frac{C}{\alpha_j}\|\nabla\cdot\bhu^{'n}_{j,h}\|_{L^\infty}^2\right)\|\be_j^n\|^2+\frac{\|\nu_j^{'}\|_{\infty}}{2}\|\nabla \be_{j,0}^{n}\|^2.\label{e-equ-bounded}
\end{align}
Use \eqref{basic-ineq-infinity}, discrete inverse inequality, Assumption \ref{assump1}, and rearranging
\begin{align}
    \frac{1}{2\Delta t}\Big(\|\be_j^{n+1}\|^2-\|\be_j^n\|^2\Big)+\frac{\Bar{\nu}_{\min}}{2}\Big(\|\nabla \be_{j,0}^{n+1}\|^2-\|\nabla \be_{j,0}^{n}\|^2\Big)\nonumber\\+\frac{\alpha_j}{2}\|\nabla \be_{j,0}^{n}\|^2\le C\left(\frac{1}{\alpha_j^2\Delta t}+\frac{1}{\Delta t}+\Delta t\right)\|\nabla\be^{n+1}_{j,\bR}\|^2\nonumber\\+\Big(\frac{C}{\alpha_jJ^2}+\frac{C\Delta t}{\alpha_jh^3}\Big)\sum_{i=1}^J\|\be_i^n\|^2+\left(\frac{C}{\alpha_jJ^2}+\frac{1}{4\Delta t}+\frac{C}{\alpha_jh^2}\right)\|\be_j^n\|^2.
\end{align}
Multiply both sides by $2\Delta t$, and summing over the time-step $n=0,1,\cdots,M-1$, results in
\begin{align}
    \|\be_j^M\|^2+\Bar{\nu}_{\min}\Delta t\|\nabla\be_{j,0}^M\|^2+\alpha_j\Delta t\sum_{n=1}^{M-1}\|\nabla \be_{j,0}^{n}\|^2\le C\Delta t\left(\frac{1}{\alpha_j^2\Delta t}+\frac{1}{\Delta t}+\Delta t\right)\sum_{n=1}^M\|\nabla\be^{n}_{j,\bR}\|^2\nonumber\\+C\Delta t\Big(\frac{1}{\alpha_jJ^2}+\frac{\Delta t}{\alpha_jh^3}\Big)\sum_{n=1}^{M-1}\sum_{i=1}^J\|\be_i^n\|^2+C\Delta t\left(\frac{1}{\alpha_jJ^2}+\frac{1}{\Delta t}+\frac{1}{\alpha_jh^2}\right)\sum_{n=1}^{M-1}\|\be_j^n\|^2.
\end{align}
Now, simplifying, and summing over $j=1,2,\cdots,J$, we have
\begin{align}
    \sum_{j=1}^J\|\be_j^M\|^2+\Delta t\sum_{n=1}^{M}\alpha_{\min}\sum_{j=1}^J\|\nabla \be_{j,0}^{n}\|^2\le \Delta t \sum_{n=1}^MC\left(\frac{1}{\alpha_{\min}^2\Delta t}+\frac{1}{\Delta t}+\Delta t\right)\sum_{j=1}^J\|\nabla\be^{n}_{j,\bR}\|^2\nonumber\\+\frac{C\Delta t}{\alpha_{\min}}\Big(1+\frac{\alpha_{\min}}{\Delta t}+\frac{1}{h^2}+\frac{J\Delta t}{h^3}\Big)\sum_{n=1}^{M-1}\sum_{i=1}^J\|\be_i^n\|^2.
\end{align}
Apply the version of the discrete Gr\"onwall inequality given in Lemma \ref{dgl}
\begin{align}
    \sum_{j=1}^J\|\be_j^M\|^2+\alpha_{\min}\Delta t\sum_{n=1}^{M}\sum_{j=1}^J\|\nabla \be_{j,0}^{n}\|^2\nonumber\\\le C exp \left(\frac{CT}{\alpha_{\min}}\Big(1+\frac{\alpha_{\min}}{\Delta t}+\frac{1}{h^2}+\frac{\Delta t}{h^3}\Big)\right)\left(1+\frac{1}{\alpha_{\min}^2}+\Delta t^2\right)\sum_{n=1}^M\sum_{j=1}^J\|\nabla\be^{n}_{j,\bR}\|^2,\label{step-2-after-gronwell}
\end{align}
and use the estimate  \eqref{step-1-bound} in \eqref{step-2-after-gronwell}, to get
\begin{comment}
\begin{align}
    \Delta t\sum_{n=1}^{M}\sum_{j=1}^J\|\nabla \be_{j,0}^{n}\|^2\le\frac{C}{\alpha_{min}}  exp \left(\frac{CC_*^2}{J^2\alpha_{min}}+\frac{C\Delta t}{h^3\alpha_{min}}+\frac{T}{2}\right)\left(\frac{C_*^2}{\alpha_{min}}+\frac{1}{\Delta t}+\Delta t\right)\nonumber\\\times\frac{C_R^2}{\gamma^2} exp\lp CT\left(\frac{C_*^2}{\alpha_{min}}+\frac{\Delta t}{h^3\alpha_{min}}\right)\rp\lp  \Delta t\sum_{n=0}^{M-1}\sum_{j=1}^J\Big(\|q_{j,h}^{n+1}-\hq_{j,h}^n\|^2+\|\lambda_{j,h}^{n+1}-\hlam_{j,h}^n\|^2\Big)\rp.
\end{align}
\end{comment}
\begin{align}
    \Delta t\sum_{n=1}^{M}\sum_{j=1}^J\|\nabla \be_{j,0}^{n}\|^2\le \frac{C}{\gamma^2\alpha_{\min}} exp\lp\frac{C}{\alpha_{\min}} \left(1+\frac{\alpha_{\min}}{\Delta t}+\frac{1}{h^2}+\frac{J\Delta t}{h^3}\right)\rp \nonumber\\\times\left(1+\frac{1}{\alpha_{\min}^2}+\Delta t^2\right)\lp\sum_{n=0}^{M-1}\sum_{j=1}^J\|p_{j,h}^{n+1}-\hp_{j,h}^n\|^2\rp.
\end{align}
Using triangle and Young's inequalities
\begin{align}
    \Delta t\sum_{n=1}^{M}\|\nabla \hspace{-1.mm}\lab\be_{0}\rab^{n}\|^2\le\frac{2\Delta t}{J^2}\sum_{n=1}^{M}\sum_{j=1}^J\|\nabla \be_{j,0}^{n}\|^2\nonumber\\\le \frac{C}{\gamma^2\alpha_{\min}} exp\lp\frac{C}{\alpha_{\min}} \left(1+\frac{\alpha_{\min}}{\Delta t}+\frac{1}{h^2}+\frac{J\Delta t}{h^3}\right)\rp\left(1+\frac{1}{\alpha_{\min}^2}+\Delta t^2\right)\lp\sum_{n=0}^{M-1}\sum_{j=1}^J\|p_{j,h}^{n+1}-\hp_{j,h}^n\|^2\rp,\label{step-2-bound-ensemble}
\end{align}
and
\begin{align}
    \Delta t\sum_{n=1}^{M}\|\nabla\hspace{-1mm}\lab\be_{\bR}\rab^{n}\|^2\le\frac{2\Delta t}{J^2}\sum_{n=1}^{M}\sum_{j=1}^J\|\nabla\be_{j,\bR}^{n}\|^2\nonumber\\\le\frac{C}{\gamma^2} exp\lp\frac{C}{\alpha_{\min}} \left(1+\frac{\Delta t}{h^3}\right)\rp\lp  \Delta t\sum_{n=0}^{M-1}\sum_{j=1}^J\|p_{j,h}^{n+1}-\hp_{j,h}^n\|^2\rp.
\end{align}
Finally, apply triangle and Young's inequalities on $$\|\nabla\hspace{-1mm}\lab\bu_h\rab^n-\nabla\hspace{-1mm}\lab\bhu_h\rab^n\|^2$$ to obtain the desire result.

\end{proof}
\begin{comment}
\begin{remark}\label{remark-1}
 We assume $\|\bhv^n_{j,h}\|_{\infty},\;\|\bhw^n_{j,h}\|_{\infty}\le K^*,\;1\le j\le J$ for some positive constant $K^*>0$. Then,
\begin{align*}
    \|l_{\bhv,h}^n\|_{\infty}=\|\sqrt{\sum_{j=1}^J|\bhv_{j,h}^{'n}}|^2\|_{\infty}&=\|\bhv_{j^*,h}^{'n}\|_{\infty}\;(\text{for some}\; j^*, 1\le j^*\le J)\le \|\bhv_{j^*,h}^n\|_{\infty}+\|\hspace{-1mm}<\bhv_h>^n\hspace{-1mm}\|_{\infty}\le 2K^*.
\end{align*}
Hence, $\|(l_{\bhw,h}^n)^2\|_{\infty}=\|l_{\bhw,h}^n\|^2_{\infty}$. Similarly, $\|(l_{\bhv,h}^n)^2\|_{\infty}=\|l_{\bhv,h}^n\|^2_{\infty}$.
\end{remark}
\end{comment}

We prove the following Lemma by strong mathematical induction.
\begin{lemma}\label{uniform-boundedness-lemma-proof}
If $\gamma\rightarrow\infty$ then there exists a constant $C_*$ which is independent of $h$, and $\Delta t$, such that for sufficiently small $h$ for a fixed mesh and fixed $\Delta t$, the solution of the Algorithm \ref{SPP-FEM} satisfies
\begin{align}
    \max_{0\le n\le M}\|\bhu_{j,h}^n\|_{L^\infty}\le C_*,\hspace{2mm}\text{for all}\hspace{2mm}j=1,2,\cdots,J.
\end{align}
\end{lemma}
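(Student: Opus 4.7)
The plan is to proceed by strong induction on the time index $n$, exploiting the convergence of the SPP-EEV solution to the Coupled-EEV solution as $\gamma\to\infty$ (Theorem \ref{gamma-convergence}) in order to transfer the uniform $L^\infty$ bound already established for the Coupled-EEV scheme in Lemma \ref{lemma1}. The Coupled-EEV iterates $\bu_{j,h}^n$ are independent of $\gamma$; the SPP-EEV iterates $\bhu_{j,h}^n$ differ from them by a splitting error that vanishes as $\gamma\to\infty$, so for a sufficiently large $\gamma$ the two families lie in the same $L^\infty$-ball up to a unit slack.

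For the base case $n=0$, the hypothesis $\bhu_j^0=\tilde{\bu}_j^0\in \bY_h\cap \bH^2(\cD)$ together with Agmon's inequality gives $\|\bhu_j^0\|_{L^\infty}\le C\|\bhu_j^0\|_{H^1}^{1/2}\|\bhu_j^0\|_{H^2}^{1/2}\le C_0$, where $C_0$ depends only on the initial data. For the inductive step, suppose $\max_{0\le n\le k}\|\bhu_{j,h}^n\|_{L^\infty}\le C_*$ for all $j=1,\ldots,J$, where $C_*$ is to be chosen. Writing $\bhu_{j,h}^{k+1}=\bu_{j,h}^{k+1}-\be_j^{k+1}$ with $\be_j^{k+1}$ the splitting error of Theorem \ref{gamma-convergence}, the triangle inequality gives
\begin{align*}
\|\bhu_{j,h}^{k+1}\|_{L^\infty}\le \|\bu_{j,h}^{k+1}\|_{L^\infty}+\|\be_j^{k+1}\|_{L^\infty}.
\end{align*}
Lemma \ref{lemma1} bounds the first term by a $\gamma$-independent constant. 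For the second, one reruns the error analysis of Theorem \ref{gamma-convergence} restricted to the sub-interval $[0,t^{k+1}]$: every invocation of Assumption \ref{assump1} in that proof involves $\|\bhu_{j,h}^n\|_{L^\infty}$ only at time levels $n\le k$, which are controlled by the inductive hypothesis. This yields $\|\be_j^{k+1}\|^2\le C(h,\Delta t)\gamma^{-1}$, and a discrete inverse inequality then produces $\|\be_j^{k+1}\|_{L^\infty}\le Ch^{-d/2}\|\be_j^{k+1}\|_{L^2}$, which tends to zero as $\gamma\to\infty$ for the fixed $h$ and $\Delta t$. Choosing $\gamma$ sufficiently large at the outset (depending on $M$, $h$, $\Delta t$) so that $\|\be_j^n\|_{L^\infty}\le 1$ uniformly for $1\le n\le M$ and $1\le j\le J$, one concludes $\|\bhu_{j,h}^{k+1}\|_{L^\infty}\le C_*+1$, and taking $C_*:=\max(C_0,C_{\text{Lemma \ref{lemma1}}})+1$ at the start closes the induction.

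The principal obstacle is the apparent circularity: Theorem \ref{gamma-convergence} relies on Assumption \ref{assump1}, which is precisely the statement being proved. The resolution is not to invoke the theorem as a black box, but to rerun its proof step-by-step within the induction, using the inductive hypothesis in place of Assumption \ref{assump1} at each time level where it is needed. A subsidiary issue is that the error estimate features $\sum_{n=0}^{k}\sum_{j=1}^{J}\|p_{j,h}^{n+1}-\hp_{j,h}^n\|^2$ on the right-hand side; this quantity must be bounded uniformly in $\gamma$, which follows from the stability estimate of Algorithm \ref{SPP-FEM} together with the analogous pressure stability for Algorithm \ref{coupled-alg} (using the inf-sup condition \eqref{infsup} and the momentum equation), so the entire right-hand side is genuinely $O(\gamma^{-1})$.
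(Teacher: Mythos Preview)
Your proposal is correct and follows essentially the same route as the paper: strong induction on the time index, using Lemma~\ref{lemma1} to bound $\|\bu_{j,h}^{k+1}\|_{L^\infty}$ for the Coupled-EEV reference solution, invoking the error estimate \eqref{after-gronwall} from the proof of Theorem~\ref{gamma-convergence} (rerun on $[0,t^{k+1}]$ with the inductive hypothesis replacing Assumption~\ref{assump1}), and converting the $L^2$ error bound to $L^\infty$ via a discrete inverse inequality so that the splitting term vanishes as $\gamma\to\infty$. Your treatment is in fact slightly more careful than the paper's in two respects: you make the resolution of the circularity explicit rather than tacitly citing \eqref{after-gronwall}, and you flag that the pressure-difference sum on the right-hand side must itself be bounded uniformly in $\gamma$, a point the paper leaves implicit.
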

\begin{proof}
Basic step: $\bhu_{j,h}^0=I_h(\bu_j^{true}(0,\bx)),$ where $I_h$ is an appropriate interpolation operator. Due to the regularity assumption of $\bu_j^{true}(0,\bx)$, we have $\|\bhu_{j,h}^0\|_{L^\infty}\le C_*$, for some constant $C_*>0$.\\
Inductive step: Assume for some $N\in\mathbb{N}$ and $N<M$, $\|\bhu_{j,h}^n\|_{L^\infty}\le C_*$ holds true for $n=0,1,\cdots,N$. Then, using triangle inequality and Lemma \ref{lemma1}, we have
\begin{align*}
\|\bhu_{j,h}^{N+1}\|_{L^\infty}\le\|\bhu_{j,h}^{N+1}-\bu_{j,h}^{N+1}\|_{L^\infty}+C_*.
\end{align*}
Using Agmon’s inequality \cite{Robinson2016Three-Dimensional}, and discrete inverse inequality, yields
\begin{align}
    \|\bhu_{j,h}^{N+1}\|_{L^\infty}\le Ch^{-\frac32}\|\bhu_{j,h}^{N+1}-\bu_{j,h}^{N+1}\|+C_*.
\end{align}
Next, using equation \eqref{after-gronwall}
\begin{align}
    &\|\bhu_{j,h}^{N+1}\|_{L^\infty}\le C_*+\frac{C}{h^{\frac32}\gamma^{\frac12} } exp\lp \frac{CC_*^2}{\alpha_{\min}}+\frac{C\Delta t}{h^3\alpha_{\min}}\rp\lp  \Delta t\sum_{n=0}^{N}\sum_{j=1}^J\|p_{j,h}^{n+1}-\hp_{j,h}^n\|^2\rp^{\frac12}\hspace{-3mm}.
\end{align}
For a fixed mesh, and time-step size, as $\gamma\rightarrow \infty$, yields $\|\bhu_{j,h}^{N+1}\|_{L^\infty}\le C_*$. Hence, by the principle of strong mathematical induction, $\|\bhu_{j,h}^{n}\|_{L^\infty}\le C_*$ holds true for $0\le n\le M$.
\end{proof}
\section{SCM}\label{scm} In this paper, sparse grid algorithm \cite{FTW2008} is consider as SCM in which for a given time $t^n$ and a set of sample points $\{\by_j\}_{j=1}^{J}\subset\bGamma$, we approximate the exact solution of \eqref{momentum}-\eqref{nse-initial} by solving a discrete scheme (which can be either Coupled-EEV or SPP-EEV). Then, for a basis $\{\phi_l\}_{l=1}^{N_p}$ of dimension $N_p$ for the space $L_\rho^2(\bGamma)$, a discrete approximation is constructed with coefficients $c_l(t^n,\bx)$ of the form \begin{align*}
\bu_h^{sc}(t^n\hspace{-0.5ex},\bx,\by)=\sum_{l=1}^{N_{p}}c_l(t^n\hspace{-0.5ex},\bx)\phi_l(\by),
\end{align*}
which is essentially an interpolant. In the sparse grid algorithm, we consider Leja and Clenshaw--Curtis points as the interpolation points that come with the associated weights $\{w^j\}_{j=1}^{N_{sc}}$. SCM were recently developed for the UQ of the Quantity of Interest (QoI), $\Theta$, which can be the lift, drag, and energy and provide statistical information about QoI, that is, $$\mathbb{E}[\Theta(\bu(t^n))]=\int_{\Gamma} \Theta(\bu(t^n),\by)\rho(\by)dy\approx\sum_{j=1}^{N_{sc}}w^j\Theta(\bu_{j,h}^n).$$
 SCM are highly efficient compared to the standard MC method for large-scale problems with large-dimensional random inputs because in this case, the rate of convergence of MC generates unaffordable computational cost. A full outline of the SCM-SPP-EEV is given in Algorithm \ref{SCM-SPP-algo}.
 \begin{algorithm}
	\caption{SCM-SPP-EEV}\label{SCM-SPP-algo}
	\begin{algorithmic}
		%\Procedure OFFLINE construction %\Comment{The g.c.d. of a and b}
  \Procedure Sparse grid algorithm
		\State \textbf{Initialization:} Mesh, FE functions, $T$, $M$, $\{\by_{j}\}_{j=1}^{J}$, $\{w_j\}_{j=1}^{J}$\\
  \hspace{5mm}\textbf{Pre-compute:} $ \{\nu_{j}\}_{j=1}^{J}$,$ \{\bu_{j,h}^0\}_{j=1}^{J}$\\
  \hspace{5mm}\textbf{for} $n = 0, \ldots, M-1$ \textbf{do}\\
		\hspace{10mm}\textbf{for} $j = 1, \ldots, J$ \textbf{do}\\
		\hspace{15mm} To compute $\bhu^{n+1}_{j,h}$, solve \eqref{spp-step-1}-\eqref{spp-step-2-2}\\
  \hspace{15mm} Calculate $\Theta(\bhu^{n+1}_{j,h})$\\
  \hspace{10mm}\textbf{end for}\\
  \hspace{9mm} Estimate $\mathbb{E}[\Theta(\bu(t^{n+1}))] \approx \sum\limits_{j = 1}^{J} w^{j} \Theta(\bhu_{j,h}^{n+1})$\\
  \hspace{5mm}\textbf{end for}
		\EndProcedure
	\end{algorithmic}
\end{algorithm}

Instead of computing $\bhu^{n+1}_{j,h}$ by solving \eqref{spp-step-1}-\eqref{spp-step-2-2} in Algorithm \eqref{SCM-SPP-algo}, if we compute $\bu^{n+1}_{j,h}$ by solving \eqref{couple-eqn-1}-\eqref{couple-incompressibility} and follow the rest of the steps, which will lead to the SCM-Coupled-EEV algorithm.
\section{Numerical Experiments}\label{numerical-experiment}
In this section, we present a series of numerical tests that verify the predicted convergence rates and show the performance of the scheme on some benchmark problems. In all experiments, we consider $\bx=(x_1,x_2)$ for 2D problems, pointwise-divergence free $(\mathbb{P}_2,\mathbb{P}_1^{disc})$ Scott-Vogelius element for the Coupled-EEV scheme, and $(\mathbb{P}_2,\mathbb{P}_1)$ Taylor-hood element for the SPP-EEV scheme on a barycenter refined triangular meshes.

\subsection{Convergence Rates Verification}

In the first experiment, we verify the theoretically found convergence rates beginning with the following analytical solution:
\[ {\bu}=\left(\begin{array}{c} \cos x_2+(1+e^t)\sin x_2 \\ \sin x_1+(1+e^t)\cos x_1 \end{array} \right),\;\;\text{and}\;\; \ p =\sin(x_1+x_2)(1+e^t),
\]
on domain $\cD=[0,1]^2$. Then, we introduce noise as $\bu_j=(1+k_j\epsilon)\bu$, and $p_j=(1+k_j\epsilon)p$, where $\epsilon$ is a perturbation parameter, $k_j:=(-1)^{j+1}4\lceil j/2\rceil/J$, $j=1,2,\cdots, J$, and $J=20$. We consider $\epsilon=0.01$, this will introduce $10\%$ noise in the initial condition, boundary condition, and the forcing functions. The forcing function $\bif_j$ is computed using the above synthetic data into \eqref{gov1}. We assume the viscosity $\nu$ is a continuous uniform random variable, and consider three random samples of size $J$ as $\nu\sim
\mathcal{U}(0.009, 0.011)$ with $E[\nu]=0.01$,  $\nu\sim
\mathcal{U}(0.0009, 0.0011)$ with $E[\nu]=0.001$, and $\nu\sim
\mathcal{U}(0.00009, 0.00011)$ with $E[\nu]=0.0001$. The initial and boundary conditions are $\bu_{j,h}^0=\bu_{j}(\bx,0)$, and $\bu_{j,h}|_{\partial\cD}=\bu_j$, respectively.

\subsubsection{SPP-EEV scheme converges to the Coupled-EEV scheme as $\gamma\rightarrow\infty$}

We define the velocity, and pressure errors as $<\hspace{-1mm}\hat{\be}_{\bu}\hspace{-1mm}>:=<\hspace{-1mm}\bu_h\hspace{-1mm}>-<\hspace{-1mm}\bhu_{h}\hspace{-1mm}>$, and $<\hspace{-1mm}\hat{e}_p\hspace{-1mm}>:=<\hspace{-1mm}\mathscr{P}_h\hspace{-1mm}>-<\hspace{-1mm}\hp_{h,\gamma}\hspace{-1mm}>$, respectively, where $$\mathscr{P}_{j,h}:=p_{j,h}-\frac{1}{Area(\cD)}\int_{\cD}p_{j,h} d\cD,$$ and $$\hp^n_{j,h,\gamma}:=\hp_{j,h}^{n-1}-\frac{1}{Area(\cD)}\int_{\cD}\hp_{j,h}^{n-1}d\cD-\gamma\nabla\cdot\bhu_{j,h}^{n}.$$ That is, these errors are the difference between the outcomes of the coupled and projection schemes.

We consider the simulation end time $T=1$, time-step size $\Delta t=T/10$, and $h=1/32$. Starting with $\gamma=0$, we successively increase $\gamma$ from 1e-2 by a factor of 10, record the errors in velocity and pressure, and compute the convergence rates, and finally present them in Table \ref{convergence-tab}. We observe that as $\gamma$ increases, the convergence rates asymptotically converge to 1, which is in excellent agreement with the theoretically predicted convergence rates in terms of $\gamma$ presented in \eqref{convergence-thm}.
\begin{table}[!ht] 
\begin{center}
\small{\begin{tabular}{|c|c|c|c|c|c|c|c|c|}\hline
\multicolumn{9}{|c|}{Fixed $T=1$, $\Delta t=T/10$, $h=1/32$}\\\hline
$\hspace{-1mm}\epsilon=0.01\hspace{-1mm}$&\multicolumn{4}{c|}{$\mathbb{E}[\nu]=0.01$}&\multicolumn{4}{c|}{$\mathbb{E}[\nu]=0.001$}\\\hline
$\gamma$ & $\|\hspace{-1mm}<\hspace{-1mm}\hat{\be}_{\bu}\hspace{-1mm}>\hspace{-1mm}\|_{2,1}$ & rate &$\|\hspace{-1mm}<\hspace{-1mm}\hat{e}_p\hspace{-1mm}>\hspace{-1mm}\|_{2,0}$& rate &$\|\hspace{-1mm}<\hspace{-1mm}\hat{\be}_{\bu}\hspace{-1mm}>\hspace{-1mm}\|_{2,1}$ & rate  &$\|\hspace{-1mm}<\hspace{-1mm}\hat{e}_p\hspace{-1mm}>\hspace{-1mm}\|_{2,0}$& rate\\\hline
$0$ & 3.9912e-0& & 6.7215e-1& & 5.1823e-0 & &6.7359e-1&  \\\hline
1e-2 & 3.6882e-0 & 0.03   &6.6291e-1   &0.01& 4.6428e-0 & 0.05   &6.6413e-1   &0.01\\\hline
1e-1 & 2.7593e-0 & 0.13   &5.9839e-1   &0.04& 3.4584e-0 & 0.13   &5.9905e-1   &0.04\\\hline
1e-0 & 9.3147e-1 & 0.47   &3.1922e-1   &0.27& 1.0567e-0 & 0.51   &3.1895e-1   &0.27\\\hline
1e+1 & 1.5728e-1 & 0.77   &5.2694e-2   &0.78& 1.9040e-1 & 0.74   &5.2358e-2   &0.78\\\hline
1e+2 & 1.7306e-2 & 0.96   &5.5839e-3   &0.97&2.1293e-2 & 0.95   &5.5397e-3   &0.98\\\hline
1e+3 & 1.7479e-3 & 1.00   &6.0452e-4   &1.00& 2.1537e-3 & 1.00   &6.0788e-4   &0.96\\\hline
\end{tabular}}
\end{center}
\caption{\footnotesize SPP-EEV scheme converges to the Coupled-EEV scheme as $\gamma$ increases with $J=20$, and $\mu=1$.}\label{convergence-tab}
\end{table}

\subsubsection{Spatial and temporal convergence of the SPP-EEV scheme}
Now, we define the error between the solution of SPP-EEV scheme and the exact solution as $<\hspace{-1mm}\be_{\bu}\hspace{-1mm}>:=<\hspace{-1mm}\bu^{true}\hspace{-1mm}>-<\hspace{-1mm}\bhu_h\hspace{-1mm}>$. The upper bound of this error is the same as it in \eqref{convergence-error} for large $\gamma$, which can be shown by using triangle inequality. To observe spatial convergence, we keep temporal error small enough and thus we fix a very short simulation end time $T=0.001$. We successively reduce the mesh width $h$ by a factor of 1/2, run the simulations, and record the errors, and convergence rates in Table \ref{sp-convergence-ep-0.01}. For $\gamma=$1e+6, we observe the second order convergence rates for all the three samples, which support our theoretical finding for the $(\mathbb{P}_2,\mathbb{P}_1)$ element.
\begin{table}[!ht] 
		\begin{center}
			\small\begin{tabular}{|c|c|c|c|c|c|c|}\hline
				\multicolumn{7}{|c|}{Spatial convergence (fixed $T=0.001$, $\Delta t =T/8$)}\\\hline
				$\hspace{-1mm}\epsilon=0.01\hspace{-1mm}$&\multicolumn{2}{c|}{\hspace{-1mm}$\mathbb{E}[\nu]=0.01$ }&\multicolumn{2}{c|}{$\mathbb{E}[\nu]=0.001$}&\multicolumn{2}{c|}{$\mathbb{E}[\nu]=0.0001$}\\\hline
				$h$ & $\|\hspace{-1mm}<\hspace{-1mm}e_{\bu}\hspace{-1mm}>\hspace{-1mm}\|_{2,1}$ & rate   
				 & 
				$\|\hspace{-1mm}<\hspace{-1mm}e_{\bu}\hspace{-1mm}>\hspace{-1mm}\|_{2,1}$ & rate & 
				$\|\hspace{-1mm}<\hspace{-1mm}e_{\bu}\hspace{-1mm}>\hspace{-1mm}\|_{2,1}$ & rate  
				 \\ \hline
$1/2$ & 4.5123e-4 &&4.5128e-4& &4.5128e-4 & \\\hline
$1/4$ & 1.1568e-4 & 1.96 &1.1569e-4 & 1.96  & 1.1569e-4 & 1.96\\\hline
$1/8$ & 2.9134e-5 & 1.99 &2.9138e-5 & 1.99  & 2.9139e-5 & 1.99\\\hline
$1/16$ & 7.3380e-6 & 1.99 &7.3495e-6 & 1.99& 7.3516e-6 & 1.99\\\hline
$1/32$ & 1.8636e-6 & 1.98 &1.8938e-6 & 1.96 & 1.9133e-6 & 1.94\\\hline 

			\end{tabular}
		\end{center}
		\caption{\footnotesize Spatial errors and convergence rates of SPP-EEV scheme with $\epsilon=0.01$, $J=20$, $\mu=1$, and $\gamma=$1e+6.}\label{sp-convergence-ep-0.01} 
	\end{table}
 
 On the other hand, to observe temporal convergence, we keep fixed mesh size $h=1/64$, and simulation end time $T=1$. We run the simulations with various time-step sizes $\Delta t$ beginning with $T/2$ and successively reduce it by a factor of 1/2, record the errors, compute the convergence rates, and present them in Table \ref{tm-convergence-ep-0.01}. We observe the convergence rates approximately equal to 1. Since the backward-Euler formula is used to approximate the time derivative in the proposed SPP-EEV scheme, the found temporal convergence rate is optimal and in excellent agreement with the theory for all the three samples.
\begin{table}[!ht] 
		\begin{center}
			\small\begin{tabular}{|c|c|c|c|c|c|c|}\hline
				\multicolumn{7}{|c|}{Temporal convergence (fixed $T=1$, $h =1/64$)}\\\hline
$\hspace{-1mm}\epsilon=0.01\hspace{-1mm}$&\multicolumn2{c|}{$\mathbb{E}[\nu]=0.01$}&\multicolumn2{c|}{$\mathbb{E}[\nu]=0.001$}&\multicolumn{2}{c|}{$\mathbb{E}[\nu]=0.0001$}\\\hline
				$\Delta t$ & $\|\hspace{-1mm}<\hspace{-1mm}e_{\bu}\hspace{-1mm}>\hspace{-1mm}\|_{2,1}$ & rate   
				&$\|\hspace{-1mm}<\hspace{-1mm}e_{\bu}\hspace{-1mm}>\hspace{-1mm}\|_{2,1}$  & rate & 
				$\|\hspace{-1mm}<\hspace{-1mm}e_{\bu}\hspace{-1mm}>\hspace{-1mm}\|_{2,1}$ & rate   
				 \\ \hline
$T/2$ &9.9272e-2  & & 3.1647e-1&  &8.1904e-1&\\\hline
$T/4$ & 4.3909e-2 & 1.18    &  1.3968e-1 & 1.18 &3.6142e-1 & 1.18\\\hline
$T/8$ &  2.0572e-2 & 1.09    &   6.5374e-2 & 1.10 &1.6932e-1 & 1.09\\\hline
$T/16$ &  9.9601e-3 & 1.05    &  3.1629e-2 & 1.05 &8.2043e-2 & 1.05\\\hline
$T/32$ &  4.9022e-3 & 1.02    &   1.5580e-2 & 1.02 &4.0510e-2 & 1.02\\\hline
$T/64$ &  2.4372e-3 & 1.01    &   7.7401e-3 & 1.01 &2.0186e-2 & 1.00\\\hline
$T/128$ &  1.2218e-3 & 1.00   &  3.8780e-3 & 1.00 &1.0102e-2 & 1.00\\\hline
$T/256$ &  6.1134e-4 & 1.00   &   1.9548e-3 & 0.99 &5.0963e-3 & 0.99\\\hline
			\end{tabular}
		\end{center}
		\caption{\footnotesize Temporal errors and convergence rates of SPP-EEV scheme for $\bu$ and $p$ with $\epsilon=0.01,J=20$, $\gamma=$ 1e+5.}\label{tm-convergence-ep-0.01} 
	\end{table}

%\subsection{Computational time}
\subsection{Taylor Green-vortex (TGV) Problem \cite{taylor1937mechanism}} 

We consider the following closed form exact solution of \eqref{momentum}-\eqref{nse-initial}:
\[	{\bu}=\left(\hspace{-2mm}\begin{array}{c} \sin x_1\cos x_2\;e^{-2\nu t} \\ -\cos x_1\sin x_2\;e^{-2\nu t} \end{array} \hspace{-2mm}\right)\hspace{-1mm},\;\text{and}\; \ p =\frac14(\cos(2x_1)+\cos(2x_2))e^{-4\nu t},
	\]
together with the domain
$\cD=[0,L]\times[0,L]$, and $\bif=\textbf{0}$. The time-dependent TGV problem shows decaying vortex as time grows. In this section, we consider the stochastic NSE~\eqref{momentum}-\eqref{nse-initial} with a random viscosity $\nu({\bx},{\by})$, where ${\by}=(y_1,y_2,\cdots,y_N)\in\Gamma\subset\mathbb{R}^N$ be a finite $N\in\mathbb{N}$ dimensional vector \cite{babuvska2007stochastic,gunzburger2019evolve} distributed according to a joint probability density function in some parameter space $\bGamma=\prod\limits_{l=1}^N\Gamma_l$ with $\mathbb{E}[\by]=\textbf{0}$, and $\mathbb{V}ar[\by]=\textbf{I}_N$. We also consider $\mathbb{E}[\nu](\bx)=\frac{c}{1000}$ for a suitable $c>0$, $\mathbb{C}ov[\nu]({\bx},{\bx^{'}})=\frac{1}{1000^2}exp\left(-\frac{({\bx}-{\bx^{'}})^2}{l^2}\right)$, $L=\pi$ is the characteristic length, and $l$ is the correlation length. 
Then, the viscosity random field can be represented by:
\begin{align}
\nu(\bx, \by_j)&=\frac{1}{1000}\psi(\bx, \by_j),
\end{align}where the Karhunen-Lo\'eve expansion as below:
\begin{align}
\psi(\bx, \by_j)=c+\left(\frac{\sqrt{\pi}l}{2}\right)^{\frac12}y_{j,1}(\omega)&+\sum_{k=1}^{q}\sqrt{\xi_k}\bigg(\sin\left(\frac{k\pi x_1}{L}\right)\sin\left(\frac{k\pi x_2}{L}\right)y_{j,2k}(\omega)\nonumber\\&+\cos\left(\frac{k\pi x_1}{L}\right)\cos\left(\frac{k\pi x_2}{L}\right)y_{j,2k+1}(\omega)\bigg), \label{eq:var-vis}
\end{align}
in which the infinite series is truncated up to the first $q$ terms. The uncorrelated random variables $y_{j,k}$ have eigenvalues are equal to
$$\sqrt{\xi_k}=(\sqrt{\pi}l)^{\frac12}exp\left(-\frac{(k\pi l)^2}{8}\right).$$
For our test problem, we consider the random variables $y_{j,k}(\omega)\in[-\sqrt{3},\sqrt{3}]$, the correlation length $l=0.01$, $N=5$, $c=1$, $q=2$, $k=1,2,\cdots, N$, $J=11$ stochastic collocation points, and $j=1,2,\cdots,J$. We consider the Clenshaw--Curtis sparse grid as the SCM, and generated it via the software package TASMANIAN \cite{stoyanov2015tasmanian,doecode_6305} with 5D stochastic collocation points and their corresponding weights. An unstructured bary-centered refined triangular mesh that provides 45,087 degrees of freedom (dof) is considered.

We consider the boundary condition $\bu_{j,h}|_{\partial\cD}=\bu$ for the SCM-Coupled-EEV, and the initial condition $\bu_{j,h}^0=\bu(\bx,0)$, for $j=1,2,\cdots, J$, and run the simulations using the both SCM-Coupled-EEV and SCM-SPP-EEV methods until $T=20$ with the time-step size $\Delta t=0.1$, $\mu=1$, and $\gamma=$1e+4. We represent the approximate velocity (shown as speed) solution produced by the SCM-SPP-EEV method in Fig. \ref{SCM-SPP-EEV-vel-pres} (a) at time $t=1$.

To compare the SCM-SPP-EEV and SCM-Coupled-EEV methods, we plot their decaying Energy vs. Time graphs in Fig. \ref{SCM-SPP-EEV-vel-pres} (b) from their outcomes. For both methods, the energy at time $t=t^n$ is computed as the weighted average of $\frac12\|\hspace{-1mm}\lab\bu_h\rab^n(\bx,\by_j)\|^2$ for all stochastic collocation points. We observe an excellent agreement between the energy plots from the SCM-Coupled-EEV scheme's solution and the SCM-SPP-EEV method's solution, which supports the theory. %For the TGV problem, the effect of the EEV on the solution is found insignificant, thus we omit the Energy vs. Time plot for various $\mu$.
\begin{figure}
	\centering
	\subfloat[]{\includegraphics[width=0.4\textwidth,height=0.30\textwidth]{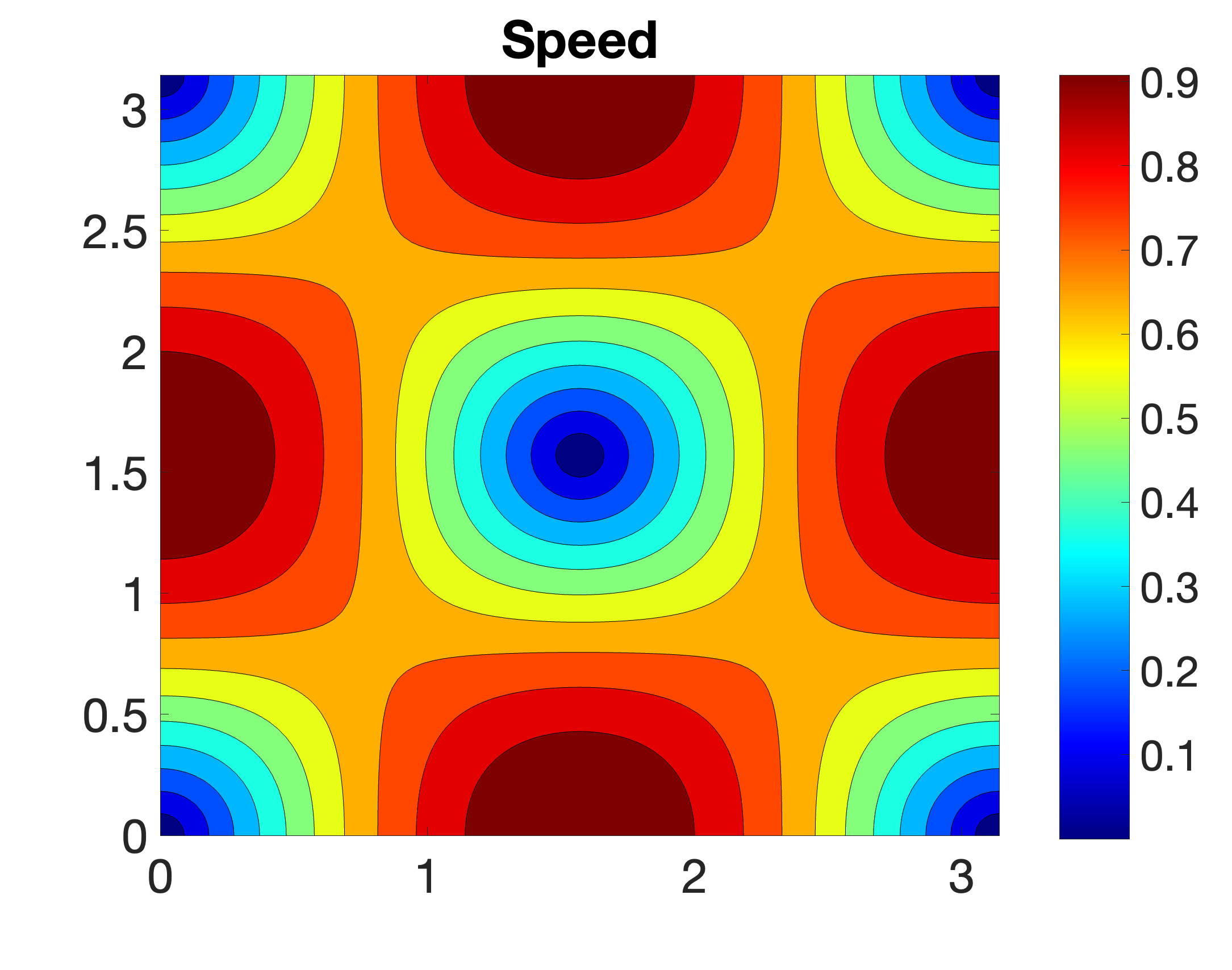}}
	\subfloat[]{\includegraphics[width=0.4\textwidth,height=0.30\textwidth]{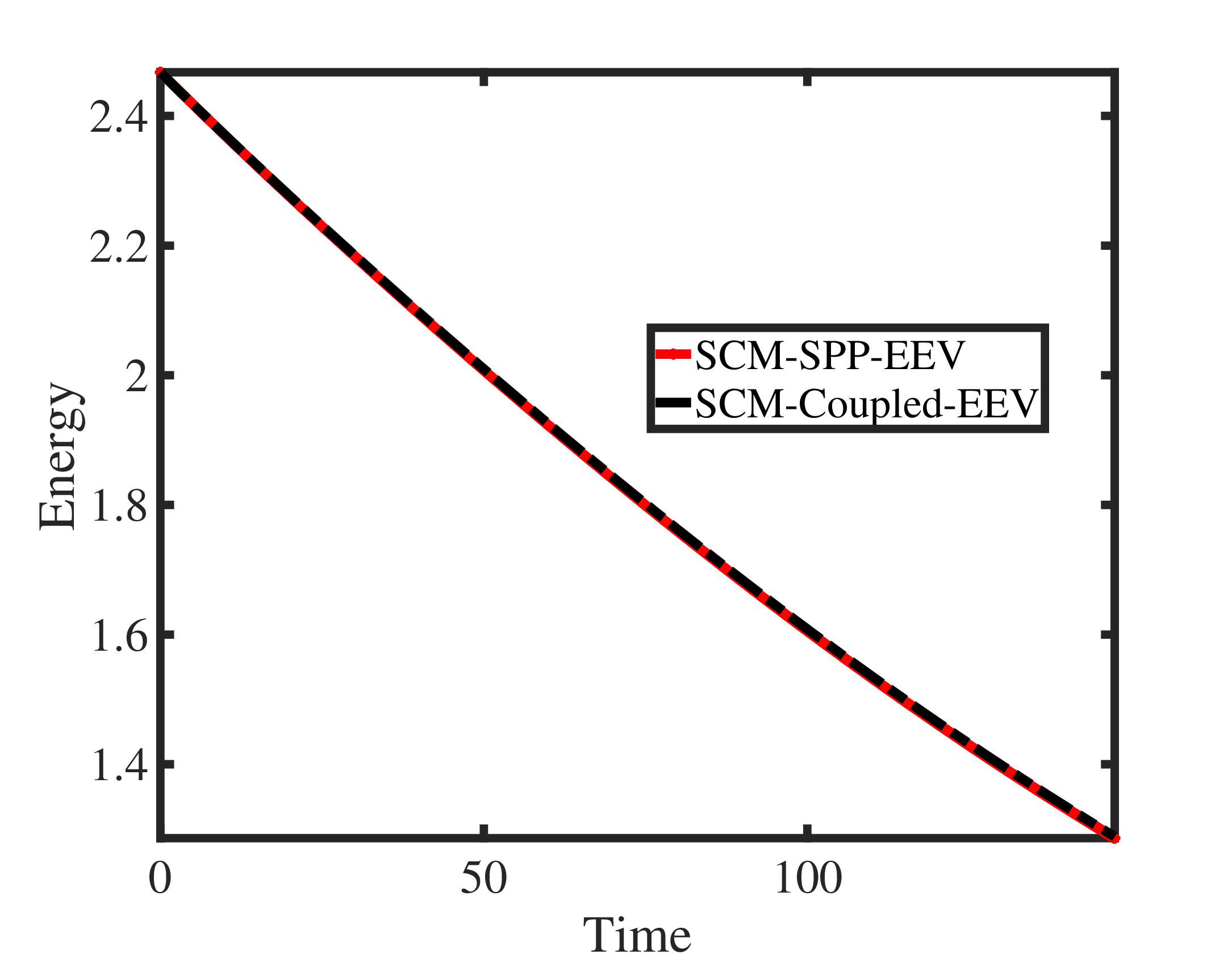}}\vspace{-4ex}
	\caption{Variable 5D random viscosity in TGV problem  for $\mathbb{E}[\nu]=0.001$: (a) Ensemble average of velocity (shown as speed contour) solution of SCM-SPP-EEV method at $t=1$, and (b) plot of Energy vs. Time for the both SCM-SPP-EEV and SCM-Coupled-EEV methods.}\vspace{-4ex}	\label{SCM-SPP-EEV-vel-pres}
\end{figure}

\subsection{Channel flow over a unit square step}  This experiment considers a benchmark channel flow over a unit square problem \cite{linke2017connection}. The dimension of the rectangular channel is $40\times 10$ unit$^2$, and the step is 5 units away from the inlet. The following parabolic noisy flow is considered
\begin{align*}
    \bu_{j,h}|_{inlet, outlet}=(1+k_j\epsilon){{\frac{x_2(x_2-10)}{25}}\choose{0}},
\end{align*} as inflow and outflow, where $k_j:=\frac{2j-1-J}{\lfloor \frac{J}{2}\rfloor }$, for $j=1,2,\cdots,J$, $J=11$, and $\epsilon=0.01$. No-slip boundary condition is applied to the domain walls and step for the SCM-Coupled-EEV scheme. In Step 1 of the SCM-SPP-EEV scheme, we enforce the no-slip boundary condition, and in Step 2, we set weakly, the normal velocity component vanishes on the boundary. The external force $\bif=\textbf{0}$ is considered. We start with the following initial condition
\begin{align*}
    \bu_{j,h}(\bx,0)=(1+k_j\epsilon){{\frac{x_2(x_2-10)}{25}}\choose{0}}.
\end{align*}
The random viscosity is modelled as: \begin{align}
\nu({\bx}, {\by_j})=\frac{1}{600}\psi({\bx}, {\by_j}),
\end{align}
with $\mathbb{E}[\nu](\bx)=\frac{c}{600}$, $L=40$, $y_{j}(\omega)\in[-\sqrt{3},\sqrt{3}]$, $l=0.01$, $d=5$, $c=1$, and $q=2$. The time-step size $\Delta t=0.1$, $\gamma=$ 1e+4, $\mu=1$, and run the simulation until $T=40$ using the both methods independently. The flow shows a recirculating vortex detaches behind the step \cite{layton2008numerical} as in Fig. \ref{channel-comparison} (a), which is the outcomes of the SCM-SPP-EEV method. To compare the SCM-Coupled-EEV and SCM-SPP-EEV methods, we plot the Energy vs. Time graphs in Fig. \ref{channel-comparison} (b) and found an excellent agreement between them.
\begin{figure} [ht]
		\centering	
  \subfloat[]{\includegraphics[width=0.49\textwidth,height=0.17\textwidth]{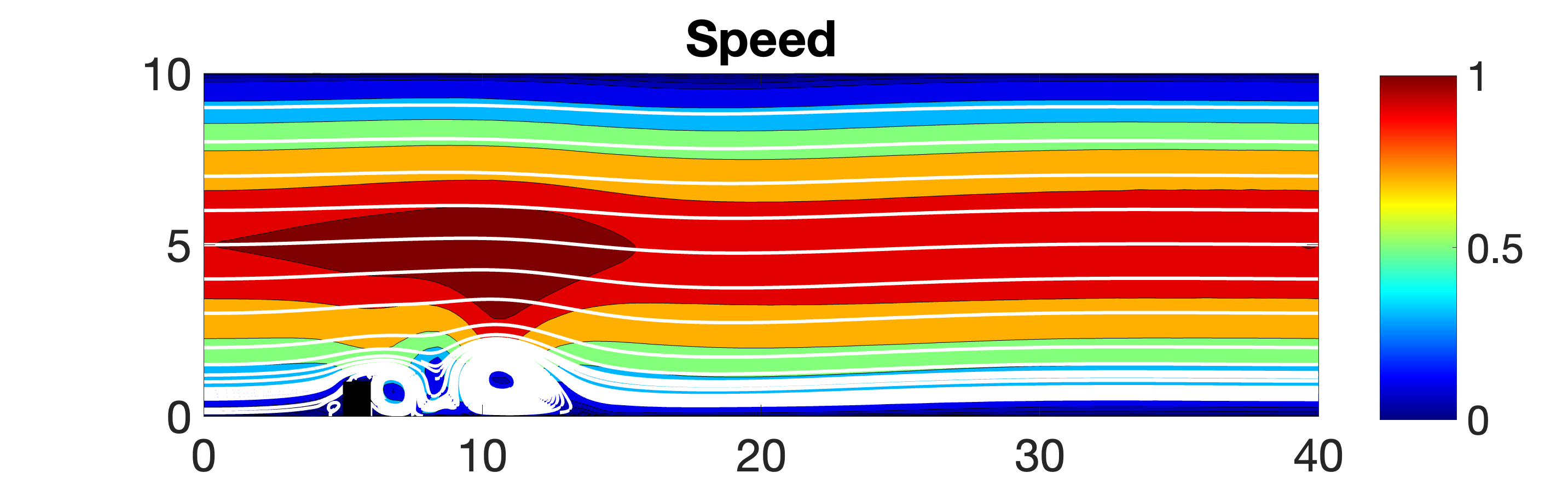}}	
  \subfloat[]{\includegraphics[width=0.3\textwidth,height=0.19\textwidth]{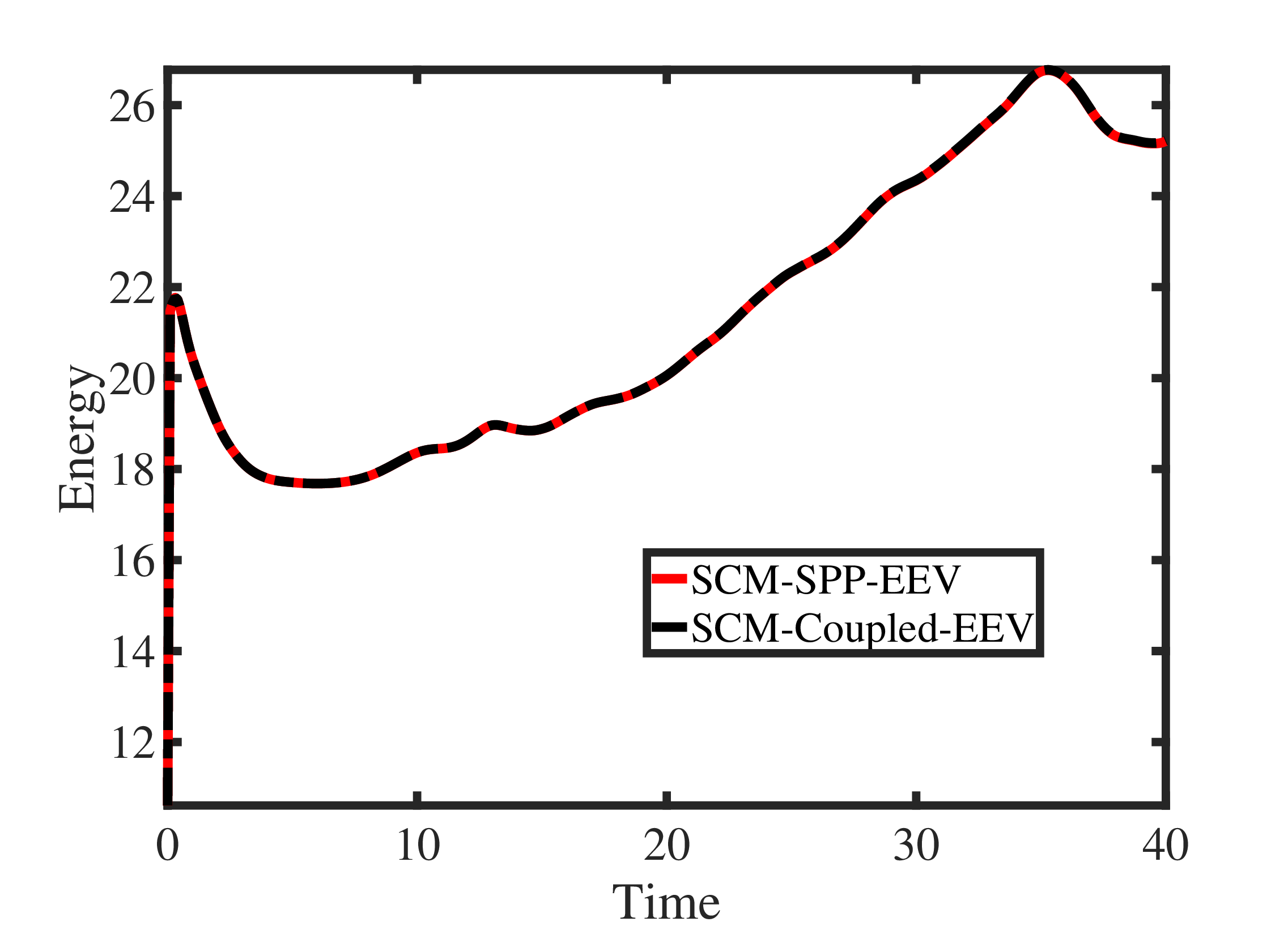}}
		\caption{\footnotesize{Variable 5D random viscosity in a flow over a step problem: (a) Ensemble average of velocity solution (shown as streamlines over the speed contour) of SCM-SPP-EEV method at $t=40$, (b) plot of Energy vs. Time for the both SCM-SPP-EEV and SCM-Coupled-EEV methods.}}\label{channel-comparison}
	\end{figure}

\subsection{Regularized Lid-driven Cavity (RLDC) Problem}\label{RLDC}
We now consider a 2D benchmark regularized lid-driven cavity problem \cite{balajewicz2013low,fick2018stabilized,lee2019study, mohebujjaman2022efficient} with a domain $\cD=(-1,1)^2$. No-slip boundary conditions are applied to all sides except on the top wall (lid) of the cavity where we impose the following noise involved boundary condition:
\begin{align*}
\bu_{j,h}|_{lid}=\left (1+k_j\epsilon\right){{(1-x_1^2)^2}\choose{0}},
\end{align*}
so that the velocity of the boundary preserve the continuity. In this case, we model the random viscosity as:
\begin{align}
\nu({\bx}, {\by_j})=\frac{2}{15000}\psi({\bx}, {\by_j}),
\end{align}
with $\mathbb{E}[\nu](\bx)=\frac{2c}{15000}$, $L=2$, $y_{j}(\omega)\in[-\sqrt{3},\sqrt{3}]$, $l=0.01$, $d=5$, $c=1$, and $q=2$. We conducted the simulation with an end time $T=600$ and a step size $\Delta t=5$. We considered the external force $\bif=\textbf{0}$. A perturbation parameter $\epsilon=0.01$ was applied in the boundary condition. The eddy-viscosity coefficient $\mu=1$ were considered. The unstructured triangular mesh used had 364,920 dof. We represent the velocity solution of the SCM-SPP-EEV method (with $\gamma=$ 1e+4) in Fig. \ref{RLDC_energy_curves} (a) at $t=600$ and the plot of Energy vs. Time of both the SCM-Coupled-EEV, and SCM-SPP-EEV methods in Fig. \ref{RLDC_energy_curves} (b). We observe an excellent agreement of the energy plots over the time period [0, 600].
\begin{figure}[h!] 
	\begin{center}    
          \subfloat[]
		{\includegraphics[width = 0.4\textwidth, height=0.30\textwidth]{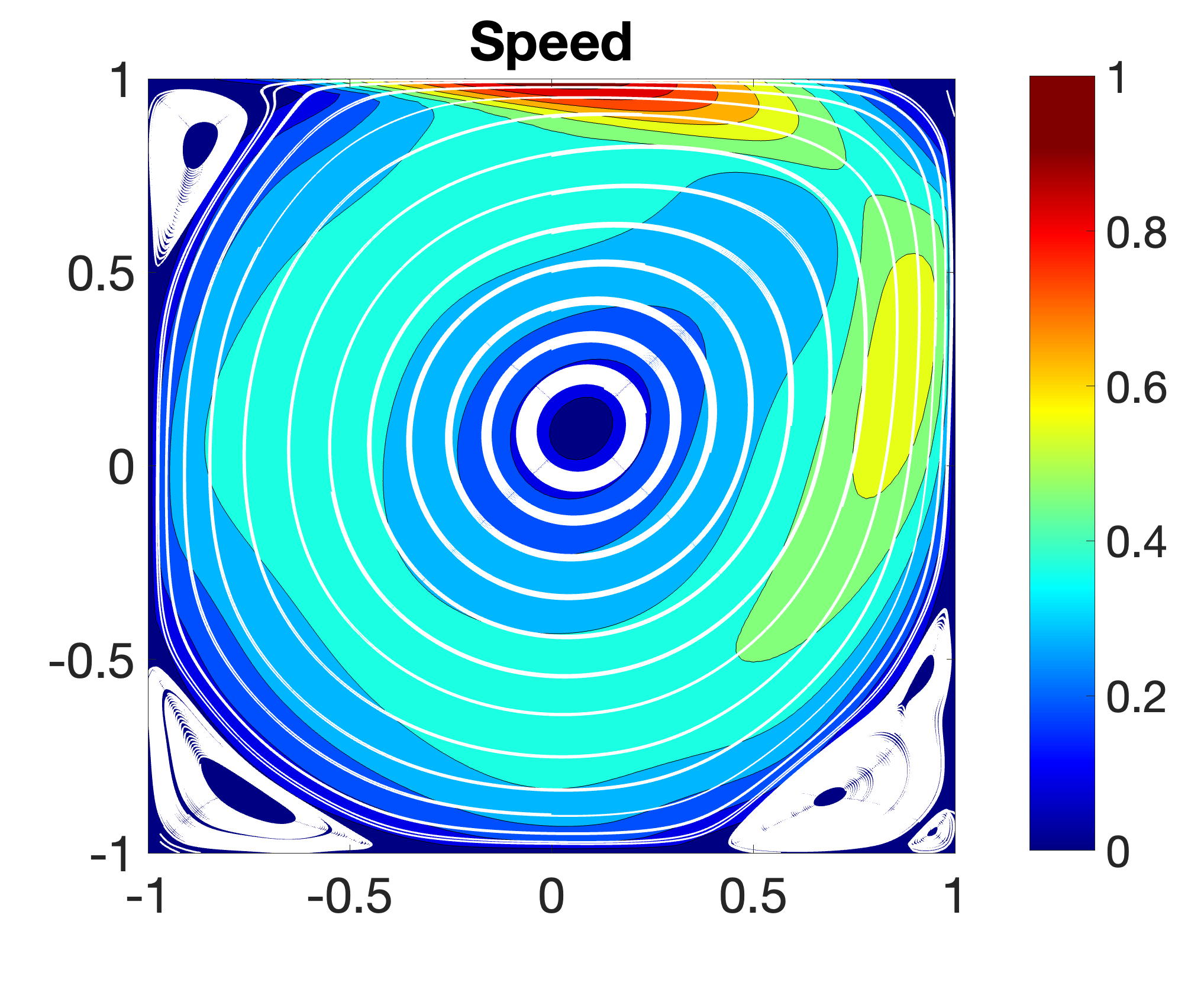}} 
  \subfloat[]{
        \includegraphics[width = 0.4\textwidth, height=0.30\textwidth]{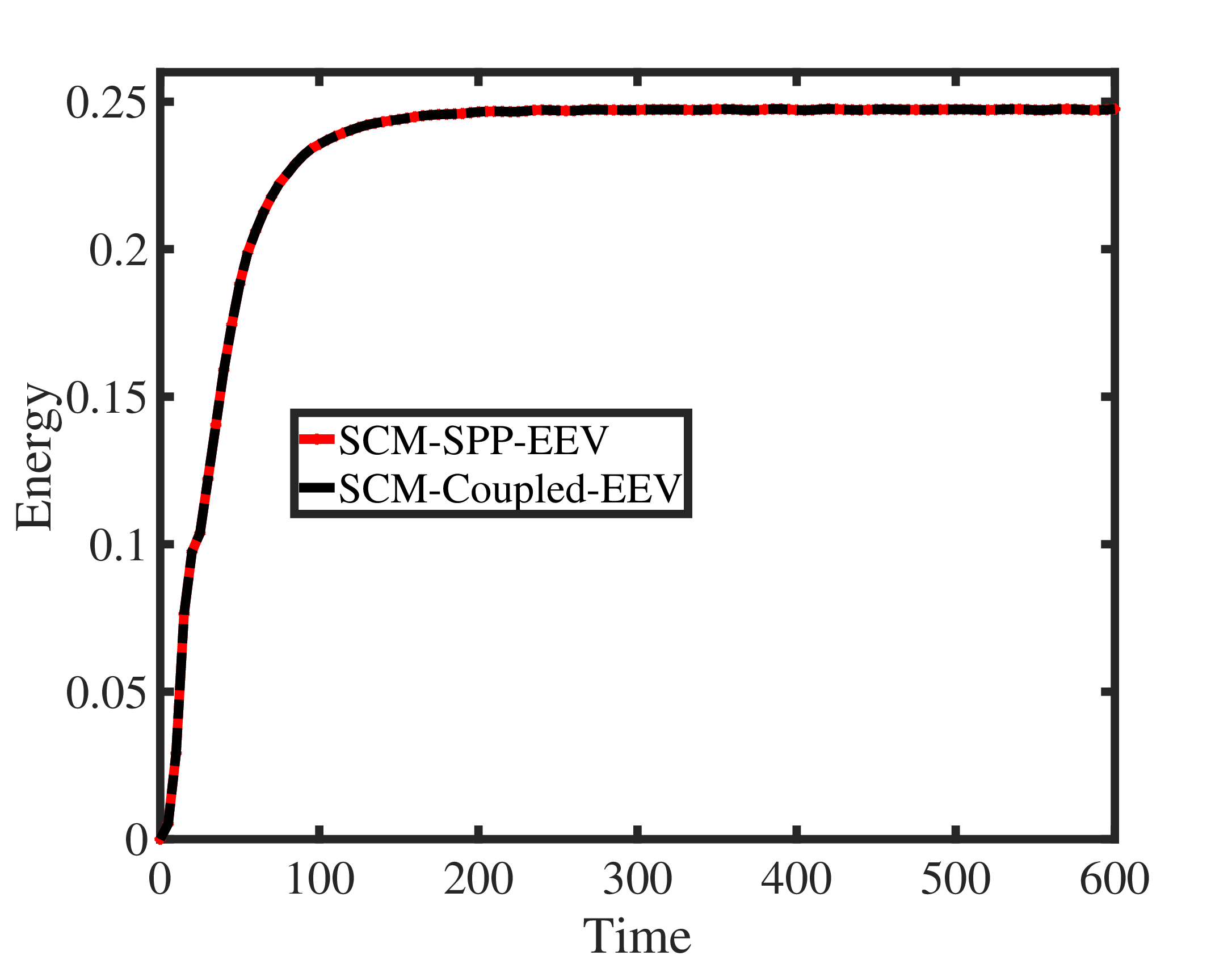}}
	\end{center}
	\caption{Variable 5D random viscosity in a RLDC problem with $\mathbb{E}[Re]=15,000$: (a) Velocity solution (shown as streamlines over the speed contour) of SCM-SPP-EEV method at $t=600$, (b) Energy vs. Time plot for both Coupled-EEV, and SCM-SPP-EEV (with $\gamma=$ 1e+4) methods.}\label{RLDC_energy_curves}
\end{figure}

\subsection{Effect of EEV on Convection Dominated Problem}

The EEV based algorithms for highly ill-conditioned complex problems, e.g., RLDC problem with high Reynolds number, are more stable than those without it \cite{jiang2015numerical, mohebujjaman2024efficient}. To observe this, we consider the RLDC problem discussed in Section \ref{RLDC} with the same continuous and discrete model input data. We plot the Energy vs. Time graphs in Fig. \ref{RLDC-energy:mu-varies} using SCM-Coupled-EEV scheme for several values of the coefficient $\mu$ of the EEV term starting from $\mu=0$ (which is the case for without EEV algorithms). 

It is observed that the flow ensemble algorithm without EEV (setting $\mu=0$ in the SCM-Coupled-EEV scheme) blows up at around $t=90$, however, for $\mu>0$, flows remain stable until $T=600$. We also notice that as $\mu$ grows, in this case, the solution converges to the case $\mu=1$. Therefore, among the ensemble algorithms for the parameterized stochastic complex flow problems, the EEV based schemes outperform. The penalty-projection algorithm that stabilizes with EEV and grad-div terms performs better than the coupled schemes.

\begin{figure}[h!] 
	\begin{center}
		\textbf{}\par\medskip            
		\includegraphics[width = 0.4\textwidth, height=0.30\textwidth]{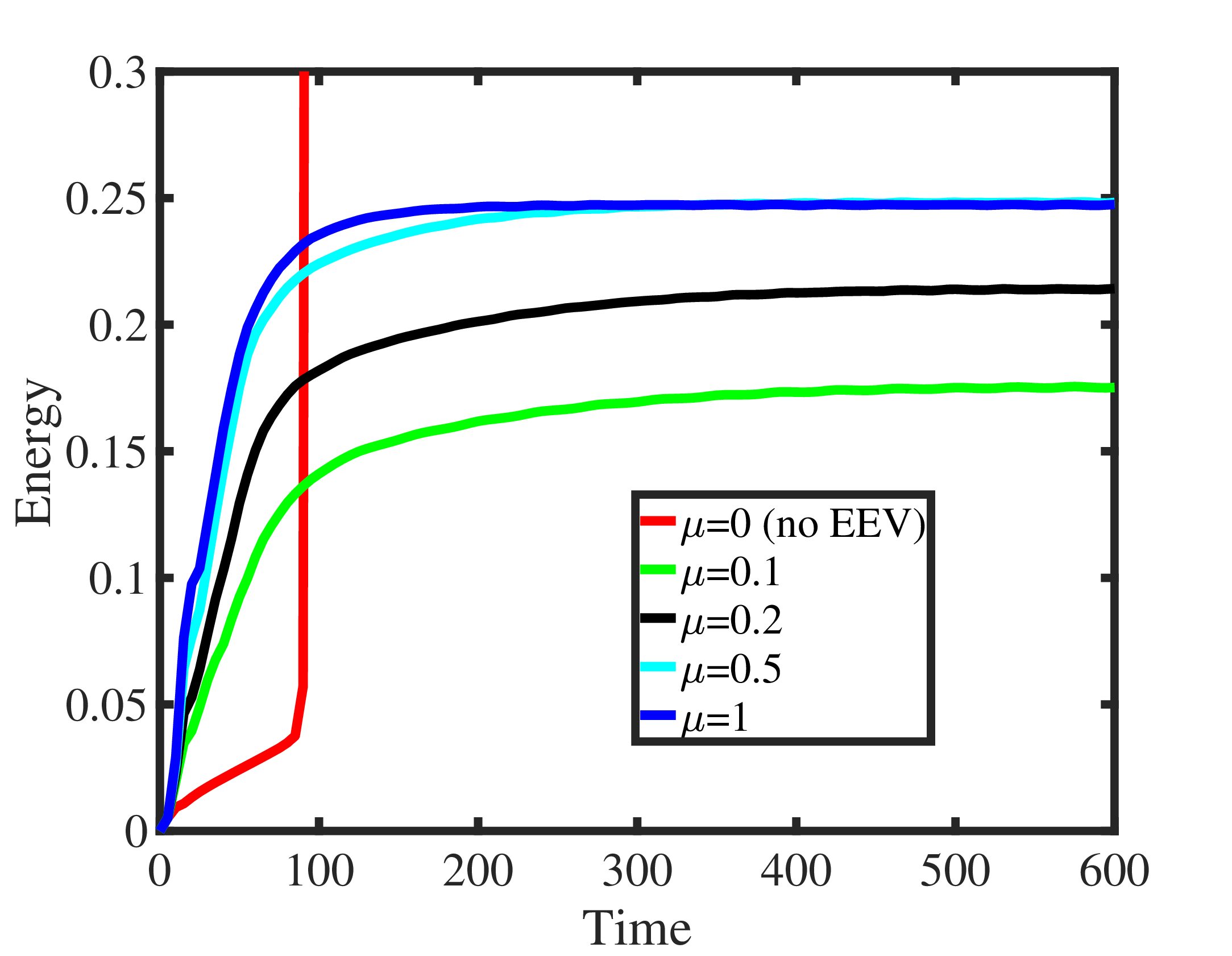}
	\end{center}
	\caption{Variable 5D random viscosity in a RLDC problem with $\mathbb{E}[Re]=15,000$: Energy vs. Time as $\mu$ varies. Solution blows up for $\mu=0$.}\label{RLDC-energy:mu-varies}
\end{figure}

\section{Conclusion}
In this work, we propose and analyze a linear extrapolated fully discrete efficient coupled EEV-based timestepping algorithm for the uncertainty quantification of SNSEs flow problems. We proved the stability and convergence of the coupled scheme rigorously. We then connect this coupled scheme to a more efficient penalty-projection and EEV-based ensemble timestepping algorithm. We proved that the penalty-projection based algorithm is stable and converges to the coupled scheme for large grad-div stabilization penalty parameter values. In future work, we plan to propose a second-order temporal accurate efficient penalty-projection timestepping algorithm for the UQ of NSE flow problems and analyze and test the schemes on benchmark problems.
\appendix
\section{Stability Proof of BESCoupled Algorithm}\label{appendix-stability}
\begin{proof}
	Taking $\bchi_{h}=\bu_{j,h}^{n+1}\in\bX_h$ and $q_{h}=p^{n+1}_{j,h}\in Q_h$ in \eqref{couple-eqn-1} and \eqref{couple-incompressibility}, respectively, using $b^*\big(\hspace{-1mm}<\bu_h>^n, \bu_{j,h}^{n+1},\bu_{j,h}^{n+1}\big)=0$, we obtain
	\begin{align}
		&\Big(\frac{\bu_{j,h}^{n+1}-\bu_{j,h}^n}{\Delta t},\bu_{j,h}^{n+1}\Big)+\|\Bar{\nu}^{\frac12}\nabla \bu_{j,h}^{n+1}\|^2+\left(2\nu_T(\bu^{'}_{h},t^n)\nabla \bu_{j,h}^{n+1},\nabla \bu_{j,h}^{n+1}\right)\nonumber\\&= (\bif_{j}(t^{n+1}),\bu_{j,h}^{n+1})-(\bu_{j,h}^{'n}\cdot\nabla \bu_{j,h}^n,\bu_{j,h}^{n+1})-\nu_j^{'}(\nabla \bu_{j,h}^{n},\nabla\bu_{j,h}^{n+1}).
	\end{align}
	Using polarization identity and $\left(2\nu_T(\bu^{'}_{h},t^n)\nabla \bu_{j,h}^{n+1},\nabla \bu_{j,h}^{n+1}\right)=2\mu\Delta t\|l^n\nabla \bu_{j,h}^{n+1}\|^2$, we have
	\begin{align}
		&\frac{1}{2\Delta t}\Big(\|\bu_{j,h}^{n+1}\|^2-\|\bu_{j,h}^n\|^2+\|\bu_{j,h}^{n+1}-\bu_{j,h}^n\|^2\Big)+\|\Bar{\nu}^\frac12\nabla \bu_{j,h}^{n+1}\|^2+2\mu\Delta t\|l^n\nabla \bu_{j,h}^{n+1}\|^2\nonumber\\&= (\bif_{j}(t^{n+1}),\bu_{j,h}^{n+1})-b^*(\bu_{j,h}^{'n}, \bu_{j,h}^n,\bu_{j,h}^{n+1})-(\nu_j^{'}\nabla \bu_{j,h}^{n},\nabla\bu_{j,h}^{n+1}).\label{before-nl-bound}
	\end{align}
  Applying Cauchy-Schwarz and Young's inequalities on the forcing term, yields
  \begin{align*}
      (\bif_{j}(t^{n+1}),\bu_{j,h}^{n+1})\le\|\bif_{j}(t^{n+1})\|_{-1}\|\nabla\bu_{j,h}^{n+1}\|\le \frac{\alpha_j}{4}\|\nabla\bu_{j,h}^{n+1}\|^2+\frac{1}{\alpha_j}\|\bif_{j}(t^{n+1})\|_{-1}^2.
  \end{align*}
  We rewrite the trilinear form in \eqref{before-nl-bound}, use identity \eqref{trilinear-identitiy}, Cauchy-Schwarz, H\"older's, Poincar\'e, and \eqref{basic-ineq}, \eqref{eddy-viscosity2}, and Young's inequalities, to have
\begin{align*}
b^*(\bu_{j,h}^{'n}, &\bu_{j,h}^n,\bu_{j,h}^{n+1})=b^*(\bu_{j,h}^{'n},\bu_{j,h}^{n+1} ,\bu_{j,h}^{n+1}-\bu_{j,h}^n)\nonumber\\&=(\bu_{j,h}^{'n}\cdot\nabla\bu_{j,h}^{n+1} ,\bu_{j,h}^{n+1}-\bu_{j,h}^n)+\frac12\left(\nabla\cdot\bu_{j,h}^{'n},\bu_{j,h}^{n+1}\cdot(\bu_{j,h}^{n+1}-\bu_{j,h}^n)\right)\nonumber\\
   &\le\|\bu_{j,h}^{'n}\cdot\nabla
\bu_{j,h}^{n+1}\|\|\bu_{j,h}^{n+1}-\bu_{j,h}^n\|+\frac12\|\nabla\cdot\bu_{j,h}^{'n}\|_{L^\infty}\|\bu_{j,h}^{n+1}\|\|\bu_{j,h}^{n+1}-\bu_{j,h}^n\|\nonumber\\&\le \||\bu_{j,h}^{'n}|\nabla
\bu_{j,h}^{n+1}\|\|\bu_{j,h}^{n+1}-\bu_{j,h}^n\|+C\|\nabla\cdot\bu_{j,h}^{'n}\|_{L^\infty}\|\nabla\bu_{j,h}^{n+1}\|\|\bu_{j,h}^{n+1}-\bu_{j,h}^n\|\nonumber\\   &\le\frac{\alpha_j}{4}\|\nabla\bhu_{j,h}^{n+1}\|^2+\|l^{n}\nabla 			\bhu_{j,h}^{n+1}\|\|\bu_{j,h}^{n+1}-\bu_{j,h}^n\|+\frac{C}{\alpha_j}\|\nabla\cdot\bu_{j,h}^{'n}\|_{L^\infty}^2\|\bu_{j,h}^{n+1}-\bu_{j,h}^n\|^2\nonumber\\&\le\frac{\alpha_j}{4}\|\nabla\bu_{j,h}^{n+1}\|^2+\Delta t\|l^{n}\nabla 			\bu_{j,h}^{n+1}\|^2+\left(\frac{1}{4\Delta t}+\frac{C}{\alpha_j}\|\nabla\cdot\bu_{j,h}^{'n}\|_{L^\infty}^2\right)\|\bu_{j,h}^{n+1}-\bu_{j,h}^n\|^2.
  \end{align*}
Use of H\"older's and Young's inequalities, we have
\begin{align*}
    -(\nu_j^{'}\nabla \bu_{j,h}^{n},\nabla\bu_{j,h}^{n+1})\le\|\nu_j^{'}\|_{\infty}\|\nabla \bu_{j,h}^{n}\|\|\nabla\bu_{j,h}^{n+1}\|\le\frac{\|\nu_j^{'}\|_{\infty}}{2}\|\nabla \bu_{j,h}^{n}\|^2+\frac{\|\nu_j^{'}\|_{\infty}}{2}\|\nabla \bu_{j,h}^{n+1}\|^2.
\end{align*}
Using the above bounds, and reducing the equation \eqref{before-nl-bound}, becomes
	\begin{align}
		&\frac{1}{2\Delta t}\Big(\|\bu_{j,h}^{n+1}\|^2-\|\bu_{j,h}^n\|^2\Big)+\left(\frac{1}{4\Delta t}-\frac{C}{\alpha_j}\|\nabla\cdot\bu_{j,h}^{'n}\|_{L^\infty}^2\right)\|\bu_{j,h}^{n+1}-\bu_{j,h}^n\|^2+\frac{\Bar{\nu}_{\min}}{2}\|\nabla \bu_{j,h}^{n+1}\|^2\nonumber\\&+(2\mu-1)\Delta t\|l^n\nabla \bu_{j,h}^{n+1}\|^2\le \frac{1}{\alpha_j}\|\bif_{j}(t^{n+1})\|_{-1}^2+\frac{\|\nu_j^{'}\|_{\infty}}{2}\|\nabla \bu_{j,h}^{n}\|^2.\label{before-young}
	\end{align}
 Choose the calibration constant $\mu>\frac12$, set the time-step restriction \begin{align}
     \Delta t<\frac{C\alpha_j}{\|\nabla\cdot\bu_{j,h}^{'n}\|^2_{L^\infty}},
 \end{align}
 dropping non-negative term from left-hand-side, and rearranging
 \begin{align}
		&\frac{1}{2\Delta t}\Big(\|\bu_{j,h}^{n+1}\|^2-\|\bu_{j,h}^n\|^2\Big)+\frac{\Bar{\nu}_{\min}}{2}\left(\|\nabla \bu_{j,h}^{n+1}\|^2-\|\nabla \bu_{j,h}^{n}\|^2\right)+\frac{\alpha_j}{2}\|\nabla \bu_{j,h}^{n}\|^2\le \frac{1}{\alpha_j}\|\bif_{j}(t^{n+1})\|_{-1}^2.
	\end{align}
Multiply both sides by $2\Delta t$, and sum over the time-steps $n=0,1,\cdots,M-1$, which will finish the proof.
\end{proof}
\section{Convergence Proof of BESCoupled Algorithm}\label{appendix-convergence}
\begin{proof}
	We start our proof by forming the error equation. Testing \eqref{gov1}-\eqref{gov2} at the time level $t^{n+1}$, the continuous variational formulations can be written
	as
	\begin{align}
		\bigg(&\frac{\bu_{j}(t^{n+1})-\bu_{j}(t^n)}{\Delta t},\bchi_{h}\bigg)+\Big(\Bar{\nu}\nabla \bu_{j}(t^{n+1}),\nabla\bchi_{h}\Big)+\Big(\bu_j(t^{n+1})\cdot\nabla \bu_{j}(t^{n+1}),\bchi_{h}\Big)\nonumber\\&= \Big(\bif_{j}(t^{n+1}),\bchi_{h}\Big)-\Big(\nu_j^{'}\nabla \bu_{j}(t^{n+1}),\nabla\bchi_{h}\Big)-\bigg(\bu_{j,t}-\frac{\bu_{j}(t^{n+1})-\bu_{j}(t^n)}{\Delta t},\bchi_{h}\bigg).\label{before-error-eqn-regular}
	\end{align}
	Denote $\be_j^n:=\bu_j(t^n)-\bu_{j,h}^n.$ Subtract \eqref{couple-eqn-1} from \eqref{before-error-eqn-regular}, gives
	\begin{align}
		&\bigg(\frac{\be_{j}^{n+1}-\be_{j}^n}{\Delta t},\bchi_{h}\bigg)+\big(\Bar{\nu}\nabla \be_{j}^{n+1},\nabla\bchi_{h}\big)+\left(\nu_j^{'}\nabla\be_{j}^n,\nabla\bchi_{h}\right)+b^*\left(<\be>^n,\bu_j(t^{n+1})-\bu_{j}(t^n),\bchi_{h}\right)\nonumber\\&+b^*\big(\hspace{-1.4mm}<\bu_h>^n,\be_j^{n+1},\bchi_{h}\big)+b^*\left(\bu_{j,h}^{'n}, \be_{j}^n,\bchi_{h}\right)+b^*\big(\be_j^n,\bu_{j}(t^n),\bchi_{h}\big)+\left(2\mu\Delta t(l^n)^2\nabla \be_{j}^{n+1},\nabla \bchi_{h}\right)\nonumber\\&-\left(2\mu\Delta t(l^n)^2\nabla \bu_{j}(t^{n+1}),\nabla \bchi_{h}\right) =-G(t,\bu_j,\bchi_{h}),
	\end{align}
	where 
	\begin{align}
		&G(t,\bu_j,\bchi_{h}):=\bigg(\bu_{j,t}-\frac{\bu_{j}(t^{n+1})-\bu_{j}(t^n)}{\Delta t},\bchi_{h}\bigg)+\left((\bu_j(t^{n+1})-\bu_j(t^{n}))\cdot\nabla \bu_{j}(t^{n+1}),\bchi_{h}\right)\nonumber\\&+\left((\bu_j(t^n)-<\bu>(t^n))\cdot\nabla( \bu_{j}(t^{n+1})- \bu_{j}(t^{n})),\bchi_{h}\right)+\left(\nu_j^{'}\nabla(\bu_{j}(t^{n+1})-\bu_{j}(t^n)),\nabla\bchi_{h}\right).
	\end{align}
	Now, we decompose the error as the interpolation error and approximation term:
	\begin{align*}
		\be_{j}^n:& = \bu_j(t^n)-\bu_{j,h}^n=(\bu_j(t^n)-\tilde{\bu}_j^n)-(\bu_{j,h}^n-\tilde{\bu}_j^n):=\bfeta_{j}^n-\bphi_{j,h}^n,
	\end{align*}
	where $\tilde{\bu}_j^n: =P_{\bX_h}^{L^2}(\bu_j(t^n))\in \bX_h$ is the $L^2$ projections of $\bu_j(t^n)$ into $\bX_h$. Note that $(\bfeta_{j}^n,\bv_{j,h})=0\hspace{2mm} \forall \bv_{j,h}\in \bX_h,$  
	we then have
	\begin{align}
&\bigg(\frac{\bphi_{j,h}^{n+1}-\bphi_{j,h}^n}{\Delta t},\bchi_{h}\bigg)+\big(\Bar{\nu}\nabla \bphi_{j,h}^{n+1},\nabla\bchi_{h}\big)+\left(\nu_j^{'}\nabla\bphi_{j,h}^n,\nabla\bchi_{h}\right)+b^*\big(\hspace{-1.4mm}<\bu_h>^n,\bphi_{j,h}^{n+1},\bchi_{h}\big)\nonumber\\&+b^*\left(<\bphi_h>^n,\bu_j(t^{n+1})-\bu_{j}(t^n),\bchi_{h}\right)+b^*\left(\bu_{j,h}^{'n}, \bphi_{j,h}^n,\bchi_{h}\right)+b^*\big(\bphi_{j,h}^n,\bu_{j}(t^n),\bchi_{h}\big)\nonumber\\&=\left(2\mu\Delta t(l^n)^2\nabla \bu_{j}(t^{n+1}),\nabla \bchi_{h}\right)-\left(2\mu\Delta t(l^n)^2\nabla \bphi_{j,h}^{n+1},\nabla \bchi_{h}\right) +b^*\big(\hspace{-1.4mm}<\bu_h>^n,\bfeta_j^{n+1},\bchi_{h}\big)\nonumber\\&+b^*\left(<\bfeta>^n,\bu_j(t^{n+1})-\bu_{j}(t^n),\bchi_{h}\right)+b^*\left(\bu_{j,h}^{'n}, \bfeta_{j}^n,\bchi_{h}\right)+b^*\big(\bfeta_j^n,\bu_{j}(t^n),\bchi_{h}\big)\nonumber\\&+\big(\Bar{\nu}\nabla \bfeta_{j}^{n+1},\nabla\bchi_{h}\big)+\left(\nu_j^{'}\nabla\bfeta_{j}^n,\nabla\bchi_{h}\right)+\left(2\mu\Delta t(l^n)^2\nabla \bfeta_{j}^{n+1},\nabla \bchi_{h}\right)+G(t,\bu_j,\bchi_{h}).\label{phi-equn}
	\end{align}
	Choose $\bchi_{h}=\bphi_{j,h}^{n+1}$, use the polarization identity in \eqref{phi-equn}, and rearrange, to get
	\begin{align}
		&\frac{1}{2\Delta t}\lp\|\bphi_{j,h}^{n+1}\|^2-\|\bphi_{j,h}^{n}\|^2+\|\bphi_{j,h}^{n+1}-\bphi_{j,h}^{n}\|^2\rp+\big\|\Bar{\nu}^{\frac{1}{2}}\nabla\bphi_{j,h}^{n+1}\|^2+2\mu\Delta t\|l^n\nabla \bphi_{j,h}^{n+1}\|^2\nonumber\\&=-\left(\nu_j^{'}\nabla\bphi_{j,h}^n,\nabla\bphi_{j,h}^{n+1}\right)+\big(\Bar{\nu}\nabla \bfeta_{j}^{n+1},\nabla\bphi_{j,h}^{n+1}\big)+\left(\nu_j^{'}\nabla\bfeta_{j}^n,\nabla\bphi_{j,h}^{n+1}\right)-b^*\left(\bu_{j,h}^{'n}, \bphi_{j,h}^n,\bphi_{j,h}^{n+1}\right)\nonumber\\&-b^*\left(<\bphi_h>^n,\bu_j(t^{n+1})-\bu_{j}(t^n),\bphi_{j,h}^{n+1}\right)+b^*\left(<\bfeta>^n,\bu_j(t^{n+1})-\bu_{j}(t^n),\bphi_{j,h}^{n+1}\right)\nonumber\\&-b^*\big(\bphi_{j,h}^n,\bu_{j}(t^n),\bphi_{j,h}^{n+1}\big)+b^*\big(\hspace{-1.4mm}<\bu_h>^n,\bfeta_j^{n+1},\bphi_{j,h}^{n+1}\big)+b^*\left(\bu_{j,h}^{'n}, \bfeta_{j}^n,\bphi_{j,h}^{n+1}\right)+b^*\big(\bfeta_j^n,\bu_{j}(t^n),\bphi_{j,h}^{n+1}\big)\nonumber\\&+\left(2\mu\Delta t(l^n)^2\nabla \bu_{j}(t^{n+1}),\nabla \bphi_{j,h}^{n+1}\right)+\left(2\mu\Delta t(l^n)^2\nabla \bfeta_{j}^{n+1},\nabla\bphi_{j,h}^{n+1}\right)+G\left(t,\bu_j,\bphi_{j,h}^{n+1}\right).\label{phibd}
	\end{align}
	Note that $b^*\big(\hspace{-1.4mm}<\bu_h>^n,\bphi_{j,h}^{n+1},\bphi_{j,h}^{n+1}\big)=0$. Now, turn our attention to finding bounds on the right side terms of \eqref{phibd}. Apply H\"older's and Young’s inequalities to obtain the following bounds
	\begin{align*}
		-\left(\nu_j^{'}\nabla\bphi_{j,h}^n,\nabla\bphi_{j,h}^{n+1}\right)\le\|\nu_j^{'}\|_\infty\|\nabla\bphi_{j,h}^n\|\|\nabla\bphi_{j,h}^{n+1}\|\le\frac{\|\nu_j^{'}\|_\infty}{2}\|\nabla\bphi_{j,h}^{n+1}\|^2+\frac{\|\nu_j^{'}\|_\infty}{2}\|\nabla\bphi_{j,h}^n\|^2,\\
		\big(\Bar{\nu}\nabla \bfeta_{j}^{n+1},\nabla\bphi_{j,h}^{n+1}\big)\le\|\Bar{\nu}\|_\infty\|\nabla\bfeta_{j}^{n+1}\|\|\nabla\bphi_{j,h}^{n+1}\|\le\frac{\alpha_j}{22}\|\nabla\bphi_{j,h}^{n+1}\|^2+\frac{11\|\Bar{\nu}\|_\infty^2}{2\alpha_j}\|\nabla \bfeta_{j}^{n+1}\|^2,\\\left(\nu_j^{'}\nabla\bfeta_{j}^n,\nabla\bphi_{j,h}^{n+1}\right)\le\|\nu_j^{'}\|_\infty\|\bfeta_{j}^n\|\|\nabla\bphi_{j,h}^{n+1}\|\le\frac{\alpha_j}{22}\|\nabla\bphi_{j,h}^{n+1}\|^2+\frac{11\|\nu_j^{'}\|_\infty^2}{2\alpha_j}\|\nabla \bfeta_{j}^{n+1}\|^2.
	\end{align*}
	For the first trilinear form, we rearrange, apply the identity \eqref{trilinear-identitiy}, Cauchy-Schwarz, H\"older's, Poincar\'e,  \eqref{basic-ineq}, and Young's inequalities, to have
	\begin{align}
		&b^*\lp \bu_{j,h}^{'n},\bphi_{j,h} ^n,\bphi_{j,h}^{n+1}\rp=b^*\lp \bu_{j,h}^{'n},\bphi_{j,h} ^{n+1},\bphi_{j,h}^{n+1}-\bphi_{j,h}^{n}\rp\nonumber\\&=\lp \bu_{j,h}^{'n}\cdot\nabla\bphi_{j,h} ^{n+1},\bphi_{j,h}^{n+1}-\bphi_{j,h}^{n}\rp+\frac12\lp\nabla\cdot \bu_{j,h}^{'n},\bphi_{j,h} ^{n+1}\cdot\left(\bphi_{j,h}^{n+1}-\bphi_{j,h}^{n}\right)\rp\nonumber\\&\le\|\bu_{j,h}^{'n}\cdot \nabla\bphi_{j,h} ^{n+1}\|\|\bphi_{j,h}^{n+1}-\bphi_{j,h}^{n}\|+\frac12\|\nabla\cdot \bu_{j,h}^{'n}\|_{L^\infty}\|\bphi_{j,h} ^{n+1}\|\|\bphi_{j,h}^{n+1}-\bphi_{j,h}^{n}\|\nonumber\\&\le \|l^n \nabla\bphi_{j,h} ^{n+1}\|\|\bphi_{j,h}^{n+1}-\bphi_{j,h}^{n}\|+C\|\nabla\cdot \bu_{j,h}^{'n}\|_{L^\infty}\|\nabla\bphi_{j,h} ^{n+1}\|\|\bphi_{j,h}^{n+1}-\bphi_{j,h}^{n}\|\nonumber\\&\le\frac{\alpha_j}{22}\|\nabla\bphi_{j,h}^{n+1}\|^2+\Delta t\|l^n \nabla\bphi_{j,h} ^{n+1}\|^2+\left(\frac{1}{4\Delta t}+\frac{C}{\alpha_j}\|\nabla\cdot\bu_{j,h}^{'n}\|_{L^\infty}^2\right)\|\bphi_{j,h}^{n+1}-\bphi_{j,h}^{n}\|^2.\label{fluc-bound-first-con}
	\end{align}
 For the second trilinear term, use the bound in \eqref{nonlinearbound3}, triangle inequality, Agmon’s \cite{Robinson2016Three-Dimensional} inequality, Sobolev embedding theorem, regularity assumption of the true solution $\bu_j\in L^\infty(0,T;\bH^3(\cD))$, and Young’s inequalities, to obtain
 \begin{align*}
-&b^*\Big(\hspace{-1mm}<\hspace{-1mm}\bphi_{h}\hspace{-1mm}>^n,\bu_j(t^{n+1})-\bu_j(t^{n}),\bphi_{j,h}^{n+1}\Big)\\&\le  C\|<\hspace{-1mm}\bphi_{h}\hspace{-1mm}>^n\|\left(\|\nabla\left(\bu_j(t^{n+1})-\bu_j(t^{n})\right)\|_{L^3}+\|\bu_j(t^{n+1})-\bu_j(t^{n})\|_{L^\infty}\right)\|\nabla \bphi_{j,h}^{n+1}\|^2  \\&\le    \frac{\alpha_j}{22}\|\nabla \bphi_{j,h}^{n+1}\|^2+\frac{C}{\alpha_j}\|\hspace{-1mm}<\hspace{-1mm}\bphi_{h}\hspace{-1mm}>^n\hspace{-1mm}\|^2.
 \end{align*}
For the third trilinear term, apply the bound in \eqref{nonlinearbound}, triangle inequality, regularity assumption of the true solution, and Young’s inequality, to obtain 
	\begin{align*}	
 b^*\Big(\hspace{-1mm}<\hspace{-1mm}\bfeta\hspace{-1mm}>^n,\bu_j(t^{n+1})-\bu_j(t^{n}),\bphi_{j,h}^{n+1}\Big)&\le C\|\nabla \hspace{-1mm}<\hspace{-1mm}\bfeta\hspace{-1mm}>^n\hspace{-1mm}\|\|\|\nabla\left(\bu_j(t^{n+1})-\bu_j(t^{n})\right)\|\|\nabla \bphi_{j,h}^{n+1}\|\\&\le \frac{\alpha_j}{22}\|\nabla \bphi_{j,h}^{n+1}\|^2+\frac{C}{\alpha_j}\|\nabla\hspace{-1mm} <\hspace{-1mm}\bfeta\hspace{-1mm}>^n\hspace{-1mm}\|^2.
 \end{align*}
 For the fourth trilinear term, we use the bound in \eqref{nonlinearbound3}, Agmon’s \cite{Robinson2016Three-Dimensional} inequality, Sobolev embedding theorem, regularity assumption of the true solution, and Young’s inequalities, to reveal
	\begin{align*}
		-b^*\lp\bphi_{j,h}^n, \bu_j(t^n),\bphi_{j,h}^{n+1}\rp&\le C\|\bphi_{j,h}^n\|\left(\|\nabla\bu_j(t^n)\|_{L^3}+\|\bu_j(t^n)\|_{L^\infty}\right)\|\nabla\bphi_{j,h}^{n+1}\|\\&\le C\|\bphi_{j,h}^n\|\|\nabla\bphi_{j,h}^{n+1}\|\\&\le \frac{\alpha_j}{22}\|\nabla\bphi_{j,h}^{n+1}\|^2+\frac{C}{\alpha_j}\|\bphi_{j,h}^n\|^2.
	\end{align*}
For the fifth, and sixth trilinear terms, apply Young’s inequalities with \eqref{nonlinearbound}, to obtain 
	\begin{align*}	
b^*\lp<\hspace{-1mm}\bu_h\hspace{-1mm}>^n, \bfeta_{j}^{n+1},\bphi_{j,h}^{n+1}\rp&\le C\|\nabla\hspace{-1mm}<\hspace{-1mm}\bu_h\hspace{-1mm}>^n\hspace{-1mm}\|\|\nabla \bfeta_{j}^{n+1}\|\|\nabla\bphi_{j,h}^{n+1}\|\\&\le \frac{\alpha_j}{22}\|\nabla \bphi_{j,h}^{n+1}\|^2+\frac{C}{\alpha_j}\|\nabla\hspace{-1mm}<\hspace{-1mm}\bu_h\hspace{-1mm}>^n\hspace{-1mm}\|^2\|\nabla \bfeta_{j}^{n+1}\|^2,\\
	b^*\lp \bu_{j,h}^{'n},\bfeta_{j} ^n,\bphi_{j,h}^{n+1}\rp&\le C\|\nabla \bu_{j,h}^{'n}\|\|\nabla\bfeta_{j} ^n\|\|\nabla\bphi_{j,h}^{n+1}\|\\&\le \frac{\alpha_j}{22}\|\nabla \bphi_{j,h}^{n+1}\|^2+\frac{C}{\alpha_j}\|\nabla \bu_{j,h}^{'n}\|^2\|\nabla\bfeta_{j} ^n\|^2.
	\end{align*} 
 For the seventh nonlinear term, apply the bound in \eqref{nonlinearbound}, regularity assumption of the true solution, and Young’s inequality, to obtain
 \begin{align*}	
 b^*\lp\bfeta^n_{j}, \bu_j(t^n),\bphi_{j,h}^{n+1}\rp&\le C\|\nabla \bfeta^n_{j}\|\|\nabla \bu_j(t^n)\|\|\nabla\bphi_{j,h}^{n+1}\|\\&\le \frac{\alpha_j}{22}\|\nabla \bphi_{j,h}^{n+1}\|^2+\frac{C}{\alpha_j}\|\nabla \bfeta^n_{j}\|^2.
	\end{align*} 
 For the first eddy-viscosity term on the right hand side of \eqref{phibd}, we apply H\"older's inequality, Agmon’s \cite{Robinson2016Three-Dimensional} inequality for both 2D and 3D, Poincar\'e inequality, the regularity assumption of the true solution, and Young’s inequality, to get
	\begin{align*}
		2\mu\Delta t\lp(l^{n})^2\nabla \bu_{j}(t^{n+1}),\nabla\bphi_{j,h}^{n+1}\rp&\le C\mu\Delta t\|l^{n}\|_{L^4}^2\|\nabla \bu_{j}(t^{n+1})\|_{L^\infty} \|\nabla\bphi_{j,h}^{n+1}\|\\&\le\frac{\alpha_j}{22}\|\nabla \bphi_{j,h}^{n+1}\|^2+C\frac{\mu^2\Delta t^2}{\alpha_j}\|l^{n}\|_{L^4}^4.
	\end{align*}
	For the second eddy-viscosity term, we rearrange, and apply Cauchy-Schwarz, and Young’s inequalities assuming $\mu>1$,
	to obtain
	\begin{align*}
		2\mu\Delta t\lp (l^{n})^2\nabla \bfeta_{j}^{n+1},\nabla\bphi_{j,h}^{n+1}\rp&= 2\mu\Delta t\left(l^{n}\nabla \bfeta_{j}^{n+1},l^{n}\nabla\bphi_{j,h}^{n+1}\right)\\&\le 2\mu\Delta t\|l^{n}\nabla \bfeta_{j}^{n+1}\|\|l^{n}\nabla\bphi_{j,h}^{n+1}\|\\&\le \mu\Delta t\|l^{n}\nabla\bphi_{j,h}^{n+1}\|^2+\mu\Delta t\|l^{n}\nabla \bfeta_{j}^{n+1}\|^2.
	\end{align*}
	Using Taylor’s series expansion, Cauchy-Schwarz and Young's inequalities, the last term of \eqref{phibd} is bounded above as
	\begin{align*}
		\left|G(t,\bu_j, \bphi_{j,h}^{n+1})\right|\le\frac{\alpha_j}{22}\|\nabla\bphi_{j,h}^{n+1}\|^2+C\Delta t^2\Big(\|\bu_{j,tt}(t_1^*)\|^2+\|\nabla \bu_{j,t}(t_2^*)\|^2+\|\nabla \bu_{j,t}(t_3^{*})\|^2\|\nabla \bu_j(t^{n+1})\|^2\\+\|\nabla \big(\bu_j(t^n)-<\hspace{-1mm}\bu(t^n)\hspace{-1mm}>\hspace{-1mm}\big)\|^2\|\nabla \bu_{j,t}(t_4^{*})\|^2\Big),
	\end{align*}
	for some $t_i^*\in [t^n,t^{n+1}]$, $i=\overline{1,4}$.
	Using these estimates in \eqref{phibd} and reducing, produces
	\begin{align}
		\frac{1}{2\Delta t}\Big(&\|\bphi_{j,h}^{n+1}\|^2-\|\bphi_{j,h}^{n}\|^2\Big)+\left(\frac{1}{4\Delta t}-\frac{C}{\alpha_j}\|\nabla\cdot\bu_{j,h}^{'n}\|_{L^\infty}^2\right)\|\bphi_{j,h}^{n+1}-\bphi_{j,h}^{n}\|^2+\frac{\Bar{\nu}_{\min}}{2}\|\nabla\bphi_{j,h}^{n+1}\|^2\nonumber\\&+(2\mu-1)\Delta t\|l^n\nabla \bphi_{j,h}^{n+1}\|^2\le\frac{\|\nu_j^{'}\|_\infty}{2}\|\nabla\bphi_{j,h}^n\|^2+\left(\frac{11\Bar{\nu}^2}{2\alpha_j}+\frac{11\|\nu_j^{'}\|_\infty^2}{2\alpha_j}\right)\|\nabla \bfeta_{j}^{n+1}\|^2\nonumber\\&+\frac{C}{\alpha_j}\|\bphi_{j,h}^n\|^2+\frac{C}{\alpha_j}\|\hspace{-1mm}<\hspace{-1mm}\bphi_{h}\hspace{-1mm}>^n\hspace{-1mm}\|^2+\frac{C}{\alpha_j}\|\nabla\hspace{-1mm}<\hspace{-1mm}\bu_h\hspace{-1mm}>^n\hspace{-1mm}\|^2\|\nabla \bfeta_{j}^{n+1}\|^2+\frac{C}{\alpha_j}\|\nabla\hspace{-1mm} <\hspace{-1mm}\bfeta\hspace{-1mm}>^n\hspace{-1mm}\|^2\nonumber\\&+\frac{C}{\alpha_j}\|\nabla \bu_{j,h}^{'n}\|^2\|\nabla\bfeta_{j} ^n\|^2+\frac{C}{\alpha_j}\|\nabla \bfeta^n_{j}\|^2+C\frac{\mu^2\Delta t^2}{\alpha_j}\|l^{n}\|_{L^4}^4+\mu\Delta t\|l^{n}\nabla \bfeta_{j}^{n+1}\|^2\nonumber\\& +C\Delta t^2\Big(\|\bu_{j,tt}(t_3^*)\|^2+\|\nabla \bu_{j,t}(t_4^*)\|^2+\|\nabla \bu_{j,t}(t_5^{*})\|^2\|\nabla \bu_j(t^{n+1})\|^2\nonumber\\&+\|\nabla \big(\bu_j(t^n)-<\hspace{-1mm}\bu(t^n)\hspace{-1mm}>\hspace{-1mm}\big)\|^2\|\nabla \bu_{j,t}(t_6^{*})\|^2\Big).
	\end{align}
	Assuming $\mu>\frac12$, and time-step size $\Delta t<\frac{C\alpha_j}{\|\nabla\cdot\bu_{j,h}^{'n}\|^2_{L^\infty}}$, dropping non-negative terms from left, and rearranging
 \begin{align}
		\frac{1}{2\Delta t}\Big(&\|\bphi_{j,h}^{n+1}\|^2-\|\bphi_{j,h}^{n}\|^2\Big)+\frac{\Bar{\nu}_{\min}}{2}\left(\|\nabla\bphi_{j,h}^{n+1}\|^2-\|\nabla\bphi_{j,h}^{n}\|^2\right)+\frac{\alpha_j}{2}\|\nabla\bphi_{j,h}^{n}\|^2\nonumber\\&\le\left(\frac{11\Bar{\nu}^2}{2\alpha_j}+\frac{11\|\nu_j^{'}\|_\infty^2}{2\alpha_j}\right)\|\nabla \bfeta_{j}^{n+1}\|^2+\frac{C}{\alpha_j}\|\bphi_{j,h}^n\|^2+\frac{C}{\alpha_j}\|\hspace{-1mm}<\hspace{-1mm}\bphi_{h}\hspace{-1mm}>^n\hspace{-1mm}\|^2\nonumber\\&+\frac{C}{\alpha_j}\|\nabla\hspace{-1mm}<\hspace{-1mm}\bu_h\hspace{-1mm}>^n\hspace{-1mm}\|^2\|\nabla \bfeta_{j}^{n+1}\|^2+\frac{C}{\alpha_j}\|\nabla\hspace{-1mm} <\hspace{-1mm}\bfeta\hspace{-1mm}>^n\hspace{-1mm}\|^2+\frac{C}{\alpha_j}\|\nabla \bu_{j,h}^{'n}\|^2\|\nabla\bfeta_{j} ^n\|^2+\frac{C}{\alpha_j}\|\nabla \bfeta^n_{j}\|^2\nonumber\\&+C\frac{\mu^2\Delta t^2}{\alpha_j}\|l^{n}\|_{L^4}^4+\mu\Delta t\|l^{n}\nabla \bfeta_{j}^{n+1}\|^2 +C\Delta t^2\Big(\|\bu_{j,tt}(t_3^*)\|^2+\|\nabla \bu_{j,t}(t_4^*)\|^2\nonumber\\&+\|\nabla \bu_{j,t}(t_5^{*})\|^2\|\nabla \bu_j(t^{n+1})\|^2+\|\nabla \big(\bu_j(t^n)-<\hspace{-1mm}\bu(t^n)\hspace{-1mm}>\hspace{-1mm}\big)\|^2\|\nabla \bu_{j,t}(t_6^{*})\|^2\Big).
	\end{align}
 Multiplying both sides  by $2\Delta t$, sum over the time-steps $n=0,1,\cdots,M-1$, using $\|\bphi_{j,h}^0\|=\|\nabla\bphi_{j,h}^0\|=0$, $\Delta tM=T$, and using stability estimate and regularity assumptions, to find
	\begin{align}   &\|\bphi_{j,h}^{M}\|^2+\alpha_j\Delta t\sum_{n=1}^{M}\|\nabla\bphi_{j,h}^n\|^2\le C\Big(h^{2k}+\Delta t^2+\Delta t\sum_{n=1}^{M-1}\|\bphi_{j,h}^n\|^2\nonumber\\&+\Delta t\sum_{n=1}^{M-1}\|\hspace{-1mm}<\hspace{-1mm}\bphi_{h}\hspace{-1mm}>^n\hspace{-1mm}\|^2+\Delta t^3\sum_{n=0}^{M-1}\|l^{n}\|_{L^4}^4+\Delta t^2\sum_{n=0}^{M-1}\|l^{n}\nabla \bfeta_{j}^{n+1}\|^2\Big).\label{after-drop-non-negative-terms}
	\end{align}
	For the third sum on the right-hand-side of \eqref{after-drop-non-negative-terms}, using Young's inequality, we write
	\begin{align*}	\|l^{n}\|_{L^4}^4=\int_\cD (l^n)^4 d\cD=\int_\cD\left(\sum_{j=1}^J|\bu_{j,h}^{'n}|^2\right)^2d\cD\le 2\sum_{j=1}^J\int_\cD|\bu_{j,h}^{'n}|^4d\cD=2\sum_{j=1}^J\|\bu_{j,h}^{'n}\|^4_{L^4},
	\end{align*}
	and get different bounds for 2D and 3D due to different
	Sobolev embedding (Ladyzhenskaya's inequalities \cite{ladyzhenskaya1958solution,L08,MR17}) as below:
	\begin{align*}
		2D:\hspace{3mm} \|\bu_{j,h}^{'n}\|^4_{L^4}&\le C\|\bu_{j,h}^{'n}\|^2\|\nabla \bu_{j,h}^{'n}\|^2,\\
		3D:\hspace{3mm} \|\bu_{j,h}^{'n}\|^4_{L^4}&\le C\|\bu_{j,h}^{'n}\|\|\nabla \bu_{j,h}^{'n}\|^3.
	\end{align*}
	With the inverse inequality and the stability bound (used on the $L^2$ norm), we obtain
	the bounds for both 2D or 3D:
	\begin{align*}
		\|\bu_{j,h}^{'n}\|^4_{L^4}&\le Ch^{2-d}\|\nabla \bu_{j,h}^{'n}\|^2.
	\end{align*}
	Using the above bound and the stability bound, the third sum on the right-hand-side of \eqref{after-drop-non-negative-terms} is bounded as
	\begin{align*}
		C\Delta t^3\sum_{n=0}^{M-1}\|l^{n}\|_{L^4}^4\le Ch^{2-d}\Delta t^2\sum_{n=0}^{M-1}\sum_{j=1}^J\Delta t\|\nabla \bu_{j,h}^{'n}\|^2\le Ch^{2-d}\Delta t^2.
	\end{align*}
	For the last sum on the right-hand-side of \eqref{after-drop-non-negative-terms}, we use triangle, Agmon's \cite{Robinson2016Three-Dimensional}, and the inverse \cite{BS08} inequalities, standard estimates of the $L^2$ projection error in the $H^1$ norm for the finite element functions, and the stability estimate, to obtain
	\begin{align*}
		C\Delta t^2\sum_{n=0}^{M-1}\|l^{n}\nabla \bfeta_{j}^{n+1}\|^2&\le C\Delta t^2\sum_{n=0}^{M-1}\|(l^{n})^2\|_{\infty}\|\nabla \bfeta_{j}^{n+1}\|^2\\&\le Ch^{-1}\Delta t^2\sum_{n=0}^{M-1}\sum_{j=1}^J\|\nabla \bu_{j,h}^{'n}\|^2\|\nabla \bfeta_{j}^{n+1}\|^2\\&\le Ch^{2k-1}\Delta t\sum_{n=0}^{M-1}\lp\Delta t\|\nabla \bu_{j,h}^{'n}\|^2\rp| \bu_j^{n+1}|_{k+1}^2\\
		&\le Ch^{2k-1}\Delta t.
	\end{align*}
	Using the above bounds, and Young's inequality in \eqref{after-drop-non-negative-terms}, we have
	\begin{align}   \|\bphi_{j,h}^{M}\|^2+\alpha_j\Delta t\sum_{n=1}^{M}\|\nabla\bphi_{j,h}^n\|^2\nonumber\\\le C\Big(h^{2k}+\Delta t^2+\Delta t\left(1+\frac{2}{J^2}\right)\sum_{n=1}^{M-1}\|\bphi_{j,h}^n\|^2+h^{2-d}\Delta t^2+h^{2k-1}\Delta t\Big).
	\end{align}
	Sum over $j=1,\cdots\hspace{-0.35mm},J$, apply triangle, and Young's inequalities, to get
	\begin{align}   \sum_{j=1}^J\|\bphi_{j,h}^{M}\|^2+\Delta t\sum_{n=1}^{M}\sum_{j=1}^J\alpha_j\|\nabla\bphi_{j,h}^n\|^2&\le C\Delta t\left(1+\frac{2}{J^2}\right)\sum_{n=1}^{M-1}\sum_{j=1}^J\|\bphi_{j,h}^n\|^2 \nonumber\\&+C\Big(h^{2k}+\Delta t^2+h^{2-d}\Delta t^2+h^{2k-1}\Delta t\Big).
	\end{align}
	Applying the discrete Gr\"onwall Lemma \ref{dgl}, we have
	\begin{align}   \sum_{j=1}^J\|\bphi_{j,h}^{M}\|^2+\Delta t\sum_{n=1}^{M}\sum_{j=1}^J\alpha_j\|\nabla\bphi_{j,h}^n\|^2&\le C\Big(h^{2k}+\Delta t^2+h^{2-d}\Delta t^2+h^{2k-1}\Delta t\Big).
	\end{align}
	Now, using the triangle and Young's inequalities, we can write
	\begin{align}
\sum_{j=1}^J\|\be_{j}^{M}\|^2+\Delta t\sum_{n=1}^{M}\sum_{j=1}^J\alpha_j\|\nabla \be_{j}^{n}\|^2\le C\Big(h^{2k}+\Delta t^2+h^{2-d}\Delta t^2+h^{2k-1}\Delta t\Big).\label{error-bounds-j-level}
	\end{align}
	Finally, again use the triangle and Young's inequalities to complete the proof.
\end{proof}
\bibliographystyle{plain}
% mike
\bibliography{Penalty-Pro}
%traian
%\bibliography{../../../../bibliography/traian/traian,../../../../bibliography/schneier/mike}
\end{document}